\title{On orientations preserving edge-connectivity in infinite graphs.}
\author[L. Aurichi]{Leandro Aurichi}
\email[L.A]{aurichi@icmc.usp.br}
\author[P. Magalhães Jr.]{Paulo Magalhães Jr.}
\email[P.M.Jr]{pjr.mat@gmail.com}
\author[G. E. Pinto]{Guilherme Eduardo Pinto}
\email[G.E.P]{guipullus.gp@usp.br}
\date{}
\begin{document}

\begin{abstract}
    We prove that every $2k$-edge-connected graph with countably many edge-ends admits a $k$-arc-connected orientation, extending the previous result by Assem, Koloschin and Pitz that also assumed the hypothesis of the graph being locally finite.
    We prove that, if every locally finite graph has a well-balanced orientation, so does every graph.
    Lastly, we explore an alternative to the Nash-Williams Orientation Conjecture via topological paths, and prove that it is true for every finitely separated graph.
\end{abstract}

\maketitle

\section{Introduction}

In 1939, Robbins proved that every \textit{finite} graph that is \textit{2-edge-connected} --- there is no bridge --- admits a \textit{strongly connected orientation} \citep*{robbins39}.
Two years later, it was proven that this result, in fact, holds for any graph, as shown by Egyed in \cite{egyed41}. 
Strengthening Robbins' theorem, Nash-Williams (\cite{nashwill60}) proved that every finite graph admits a \textit{well-balanced orientation} and its corollary: every \textit{2k-edge-connected} finite graph admits a \textit{k-arc-connected} orientation, for any natural number $k$.
In the same paper, the author claimed that such a theorem could be extended to any graph, finite or otherwise, without any proof ever being published.
Now, more than sixty years later and in spite of various advancements made, the two general statements still stand as conjectures:

\begin{conj}[Strong Conjecture]\label{wellb,conj}
    Every graph admits a well-balanced orientation.
\end{conj}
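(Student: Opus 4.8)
The plan is to attack the statement directly for locally finite graphs and then transfer to the general case. Recall that an orientation $D$ of $G$ is \emph{well-balanced} if for every ordered pair $(u,v)$ of vertices one has
\[
\lambda_D(u,v) \;\geq\; \left\lfloor \tfrac{1}{2}\,\lambda_G(u,v) \right\rfloor,
\]
where $\lambda_G(u,v)$ is the maximum number of edge-disjoint $u$--$v$ paths in $G$ and $\lambda_D(u,v)$ the maximum number of edge-disjoint \emph{directed} $u$--$v$ paths in $D$. Invoking the reduction announced in the abstract (locally finite $\Rightarrow$ general) only as a black box, it then suffices to produce a well-balanced orientation of an arbitrary \emph{locally finite} graph $G$, and this is where the substantive work lies.

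First I would fix an exhaustion $G_1 \subseteq G_2 \subseteq \cdots$ of $G$ by finite subgraphs and invoke the finite theorem of Nash-Williams to choose a well-balanced orientation $D_n$ of each $G_n$. The set of all orientations of $G$ is a closed, hence compact, subspace of the product $\{0,1\}^{E(G)}$, so the sequence $(D_n)$ — read as partial orientations extended arbitrarily — admits a convergent subnet, yielding a limit orientation $D$ whose restriction to every finite edge set agrees with $D_n$ for infinitely many $n$. Because $G$ is locally finite, any pair $u,v$ with $\lambda_G(u,v) < \infty$ has its connectivity realized already inside some finite subgraph, so for all large $n$ the orientation $D_n$ carries at least $\lfloor \lambda_G(u,v)/2 \rfloor$ edge-disjoint directed $u$--$v$ paths; the pairs with $\lambda_G(u,v)=\infty$ would be treated by a separate exhaustion producing arbitrarily many disjoint directed paths.

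The hard part — and the reason the conjecture remains open — is to pass these finite directed path systems to the limit $D$. A family of $k$ edge-disjoint directed $u$--$v$ paths in $D_n$ need not converge to $k$ such paths in $D$: as $n \to \infty$ the paths may lengthen without bound and \emph{escape to infinity}, so that the limiting object is not a finite directed path but a directed \emph{topological} path running into an edge-end of $G$. This is exactly the phenomenon that the abstract's third contribution isolates — the Nash-Williams conjecture recast via topological paths, established there for finitely separated graphs — and that the first contribution tames under the hypothesis of countably many edge-ends. Concretely, I would first prove the topological well-balanced statement for $D$ in full generality, using a closed-ness argument for arcs in the end-compactification to force the finite path systems to converge as directed topological paths; the \emph{main remaining obstacle} is then to rule out the ends, i.e.\ to upgrade a family of $k$ edge-disjoint directed topological $u$--$v$ paths into $k$ genuine finite directed paths. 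Controlling this gap between combinatorial and topological edge-connectivity — innocuous when there are finitely or countably many ends, but opaque in general — is precisely where I expect the difficulty to concentrate, and a positive resolution there would complete the argument.
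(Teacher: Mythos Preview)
The statement you are attempting is Conjecture~\ref{wellb,conj}, which the paper presents as \emph{open}; there is no proof in the paper to compare against. What the paper does prove is the reduction you invoke as a black box (Theorem~\ref{thm:wellb}: locally finite $\Rightarrow$ general), together with the \emph{topological} analogue for all finitely separated graphs (Theorem~\ref{thm:topolcon}). Your proposal does not close the conjecture, and you say so yourself: the ``main remaining obstacle'' you name---upgrading $k$ edge-disjoint directed topological $u$--$v$ paths to $k$ genuine finite directed paths---is not a technicality but the entire content of the problem.

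Concretely, the compactness/$F$-limit strategy you sketch for locally finite $G$ is exactly what the paper carries out in Section~\ref{sec:Const} to prove Proposition~\ref{lem:locfinfinal}, and the outcome there is only $\overrightarrow{\lambda}^{T}(x,y)\ge\lfloor\tfrac12\lambda(x,y)\rfloor$, i.e.\ the \emph{topological} bound. The limit of the finite directed path systems produces, in general, rays escaping to an end rather than finite paths, and the paper's one-ended ladder example in Section~\ref{section:topcon} shows that an orientation can have $\overrightarrow{\lambda}^{T}(x,y)\ge 1$ for all pairs while failing to be strongly connected. So there is no general mechanism---not for countably many ends, not even for a single end---that converts topological arc-connectivity into combinatorial arc-connectivity. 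Your final paragraph suggests this gap is ``innocuous when there are finitely or countably many ends''; that is not correct, and absent a new idea for controlling where the limit paths go, the argument stalls at exactly the point where the paper's own results stop.
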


\begin{conj}[Weak Conjecture]\label{2k,conj}
    For any natural number $k$, every $2k$-edge-connected graph admits a $k$-arc-connected orientation.
\end{conj}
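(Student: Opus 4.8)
The plan is to attempt a compactness/exhaustion argument that lifts the finite Nash--Williams theorem to arbitrary graphs, and to isolate precisely the point at which the number of edge-ends enters. First I would try to reduce to the locally finite case. Given a $2k$-edge-connected graph $G$, I would repeatedly \emph{split off} pairs of edges at a vertex of infinite degree, choosing each pair so that $2k$-edge-connectivity is preserved, in the spirit of the Lov\'asz/Mader splitting-off theorems, and then argue that a $k$-arc-connected orientation of the resulting locally finite graph pulls back to one of $G$. This is the step the present paper already carries out under the extra countability hypothesis, so the real question here is whether the splitting can be organized without that hypothesis.

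Next, for a locally finite connected $G$ (which necessarily has countably many vertices and edges, so that its space of edge-ends is a compact, metrizable, totally disconnected space, a closed subset of Cantor space), I would work inside the compact product space $\{+,-\}^{E(G)}$ of all orientations. The key reformulation is that, because $k$ is fixed and finite, Menger's theorem is available: an orientation $D$ is $k$-arc-connected if and only if \emph{every finite directed cut separating two vertices carries at least $k$ arcs}, i.e. for every $X$ with $u\in X$, $v\notin X$ and $|\delta^+_D(X)|<\infty$ one has $|\delta^+_D(X)|\ge k$. I would then fix an exhaustion $G_1\subseteq G_2\subseteq\cdots$ of $G$ by finite subgraphs, orient each $G_n$ by the finite theorem so that every undirected cut of $G_n$ carries at least $k$ arcs in each direction, and extract a limit orientation $D$ on all of $G$ by compactness. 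By construction $D$ sends at least $k$ arcs across \emph{every finite undirected cut} $\delta(X)$ in each direction; this much is a closed condition and survives the limit.

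The hard part, and the reason the statement remains a conjecture, is that $k$-arc-connectivity is governed by finite \emph{directed} cuts, and a finite directed cut $\delta^+_D(X)$ can sit inside an \emph{infinite} undirected cut $\delta(X)$. Concretely, a set $X$ may send infinitely many edges toward an edge-end while the limit orientation directs all but fewer than $k$ of them inward; then $|\delta^+_D(X)|<k$ even though $|\delta(X)|$ is infinite and every finite cut condition is satisfied. Ruling this out requires controlling the orientation \emph{at every edge-end}, forbidding it from funnelling an end so as to starve some directed cut. When there are only countably many edge-ends one can enumerate them and impose the requisite condition end-by-end while building the orientation, which is exactly the mechanism behind the present paper and its predecessor. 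For arbitrarily many ends the end space can contain a Cantor set, and I expect the principal obstacle to be maintaining the end-control condition \emph{simultaneously at continuum-many ends} through the limit: a naive transfinite construction breaks at limit stages, where the accumulated partial orientation may already have committed infinitely many edges of some not-yet-treated end to point inward. Overcoming this, presumably through a sufficiently uniform, Menger-type choice of the $k$ escaping arc-routes that is stable under taking limits along the end space, is where I would concentrate the effort, and it is the genuinely open ingredient.
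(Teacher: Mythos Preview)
The statement is an open conjecture, and the paper does not prove it; it establishes only the special case (Theorem~\ref{thm:general}) where $G$ has countably many edge-ends. Your proposal is not a proof either---you explicitly and correctly flag the argument as incomplete, isolating the obstruction at the point where one must control the limit orientation simultaneously at uncountably many edge-ends. So there is no disagreement about the status of the result.

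On method, your outline diverges from the paper in two places. For the reduction from arbitrary graphs to locally finite ones you propose Lov\'asz--Mader style splitting-off at vertices of infinite degree; the paper instead \emph{expands} each such vertex into a $2k$-ray (Section~\ref{section:loctocou}) and separately invokes Laviolette's bond-faithful decomposition to pass from arbitrary cardinality to countable. The expansion trick is cleaner here because it sidesteps the question of whether an infinite sequence of splittings can be organized while preserving $2k$-edge-connectivity, which does not follow immediately from the finite splitting theorems. For the locally finite step you sketch a direct compactness argument lifting Nash--Williams from finite exhaustions; the paper does not attempt this and simply quotes the Assem--Koloschin--Pitz theorem, whose proof is substantially more delicate than naive compactness for precisely the reason you isolate: a finite \emph{directed} cut can live inside an infinite undirected cut, so the closed condition that survives the limit (at least $k$ arcs across every finite undirected cut in each direction) is strictly weaker than $k$-arc-connectivity.

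Your diagnosis of the obstruction---that the limit orientation may funnel an edge-end so as to starve some directed cut, and that the countable-edge-end hypothesis is what permits an end-by-end enumeration to rule this out---is accurate and aligns with the structure of the known partial results. What is missing, and what neither you nor the paper supplies, is any mechanism for handling uncountably many edge-ends; that remains the open content of the conjecture.
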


In fact, a weaker conjecture than the two previous statements was raised in \cite{Thomas89}.
It asked whether, for every natural number $k$, there was a number $f(k)$ such that every $f(k)$-edge-connected graph admits a $k$-arc-connected orientation.
This particular problem was only recently answered in \cite{Thomas16} by the author who first asked it, where he proves that we can take $f(k)$ as $8k$.
The upper bound was later improved by Assem, Koloschin and Pitz to $f(k)=4k$ (\cite{ASSEM25}).

Thomassem's proof used an important trajectory when dealing with infinite graphs, first by proving the thesis for locally finite graphs.
Then expanding vertices of countably infinite graphs in order to achieve locally finite graphs in such a way that allows to verify that the locally finite case implies the countable case.
Finally, it was devised a method to reduce the general case to the countable one.

An alternative for Thomassem's conjecture which allowed for tpological path, that is paths that can pass through the ends of the graph, was raised by Barát and Kriesel on the survey dedicated to Thomasssem's work \cite{BARAT20102573}.
Recently it was answered by Jannasch in \cite{jarach}, which in fact verified the infinite path alternative for the Weak Conjecture on locally finite graphs.

In this paper we present results about both of these conjectures through similar techniques to the aforementioned authors.
We also prove an alternative version to the strong conjecture, allowing topological pass as in the other case.

The next section is dedicated to some preliminaries results about the preservation of \textit{edge-ends} on subgraphs with specific properties, which will be used in later sections.
The ensuing Section \ref{section:loctocou} is where, using the result from the previous section, it will be proven the following generalization of the theorem from \cite{ASSEM25}.

\begin{thm*}[\ref{thm:general}]
            If $G$ is a graph $2k$-edge-connected and has countably many edge-ends, then it admits an edge-orientation $\overrightarrow{G}$ that is $k$-arc-connected.
\end{thm*}

The methods utilized in this section are again used, with the necessary adaptations, in Section \ref{section:strong} in order to verify that the strong conjecture, as is the case for the weak conjecture, can be reduced to the locally finite case.
\begin{thm*}[\ref{thm:wellb}]
    If every locally finite graph has a well-balanced orientation, then every graph has a well-balanced orientation. 
\end{thm*}

Finally, the last two sections are dedicated to exploring the topological alternative to the strong conjecture via topological paths in the $\ETOP^\prime(G)$ space --- the topological space of the graph with its edge-ends as points on the infinite --- and proving that for graphs finitely separated there is a topologically well-balanced orientation.
The first half of Section \ref{section:topcon} is dedicated to defining the topological space in which the study takes place, as well as topological edge and arc connectivity, its second half states the topological alternative and reduces it to the locally finite case, while leaving its proof to Section \ref{sec:Const}.



Observing the usual notation of the works on this problem, throughout this work graphs admit parallel edges, but no loops.

\section{Bond-Faithful Subgraphs and Edge-Ends}
    An integral part of the theory of infinite graphs are their ends and the end-space topology over them.
    Lately, their edge-wise counterpart, first introduced in \cite{HAHN1997225}, has been subject to different studies and results.

    \begin{deff}
        For a graph $G$, two rays $r,s$ on $G$ are edge-equivalent, and said $r\sim_E s$, if for every finite set of edges $F\subseteq E(G)$, tails of $r$ and $s$ are on the same connected component of $G-F$.

        Each equivalence class of $\sim_E$ is an edge-end and the set of edge-ends is denoted by $\Omega_E(G)$.
    \end{deff}

    Although the notions of ends and edge-ends coincide in locally finite graphs, even for countably infinite graphs they can differ significantly, as illustrated in the following example.

    \begin{exa}
        A countably infinite graph with one edge-end and uncountably many ends.
    \end{exa}
    \begin{proof}
        Consider cubic infinite tree $T$, with a distinct vertex denoted by $r$.
        The graph $G$ is defined as adding an edge between $r$ and every other vertex on $T$, see Figure \ref{fig:ends}.

        \begin{figure}[ht]
        \centering
            \input{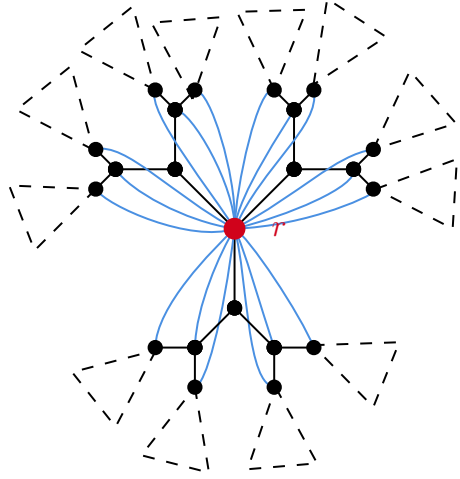}
            \caption{The graph $G$}
    \label{fig:ends}
\end{figure}
        Since $G-r$ is isomorphic to three independent copies of the Cantor tree, it has continuum-many ends.
        Yet, for every finite set of edges $F\subseteq E(G)$ and ray $R\subseteq G$, any tail $R^\prime\subseteq G-F$ of $R$ is connected to the vertex $r$.
        Therefore, there is only one edge-end.
    \end{proof}
    Nevertheless, it is important to remark that for any graph $G$, if two rays are vertex-equivalent, then they are edge-equivalent, therefore the the size of $\Omega_E(G)$ is always limited by the number of ends of $G$.

    Over the set of edge-ends, we define a topology, which we call the edge-end space.
    The topology is the the one generated by the base with elements as follows.   
    For each edge-end $\varepsilon=[r]\in\Omega_E(G)$ and finite edge set $F\subseteq E(G)$, consider the open set
    \[C(\varepsilon;F)=\{[s]\in\Omega_E(G):s\mbox{ has a tail in the same connected component as }r\mbox{ in }G-F\}.\]

    It is easy to see that the concept of edge-ends does not translate well between a graph and its subgraphs, as the case of two distinct edge-ends of the subgraph being equivalent on the graph.
    However, if we ask that that the subgraph respects the edge separator sets, it is possible to embed its edge-end space on the larger one.

    In order to define formally what it means to respect edge separator sets, it is necessary first to remember that a bond is a minimal cut, which was designed to preserve edge-connectivity between vertices from the graph to the subgraph.
    \begin{deff}
        A subgraph $H\subseteq G$ if bond-faithful if every finite bond of $H$ is a finite bond of $G$.
    \end{deff}

    This definition will not allow the subgraph to differentiate two rays on the same edge-end, by virtue of this differentiation being done by finite separators, therefore also by bonds.

    \begin{lem}\label{lem:subbond}
        Let $H\subseteq G$ be a subgraph.
        The function $\varphi:\Omega_E(H)\to\Omega_E(G)$ defined as $\varphi([r]_H)=[r]_G$ is well-defined and continuous.
        Moreover, if $H\subseteq G$ is bond-faithful, then $\varphi$ is a topological embedding.
    \end{lem}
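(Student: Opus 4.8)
The plan is to establish, in order, that $\varphi$ is well defined, that it is continuous, and that under bond-faithfulness it is moreover injective and open onto its image; the last two properties make $\varphi$ a topological embedding. Throughout I would use the single observation that for every finite $F\subseteq E(G)$ one has $H-(F\cap E(H))=H-F\subseteq G-F$ as subgraphs, so any two vertices lying in one component of $H-(F\cap E(H))$ lie in one component of $G-F$. Well-definedness follows at once: if $r\sim_E s$ in $H$ then, for finite $F\subseteq E(G)$, applying edge-equivalence in $H$ to $F\cap E(H)$ places tails of $r$ and $s$ in a common component of $H-(F\cap E(H))$, hence of $G-F$, so $r\sim_E s$ in $G$ (and a ray of $H$ is a ray of $G$). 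For continuity it is enough to pull back a basic open set $C([w]_G;F)$ with $w$ a ray of $G$ and $F\subseteq E(G)$ finite; I would show that whenever $[s]_H$ lies in the preimage --- i.e.\ $s$ has a tail in the $(G-F)$-component of $w$ --- the basic open neighbourhood $C([s]_H;\,F\cap E(H))$ is contained in the preimage, since a ray with a tail in the $\bigl(H-(F\cap E(H))\bigr)$-component of $s$ has, in $G-F$, a tail joined to that of $s$ and hence to that of $w$.

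For the remaining two properties I would isolate two facts about an abstract connected graph $K$ and prove them separately. The first: \emph{if $u,v$ lie in different components of $K-F$ for some finite $F\subseteq E(K)$, then there is a finite bond $D$ of $K$ whose two sides separate $u$ from $v$, and moreover $D\subseteq\partial_K A$, where $A$ is the vertex set of the component of $K-F$ containing $u$ and $\partial_K A:=E_K(A,V(K)\setminus A)$.} This I would prove by the familiar two-step contraction: $\partial_K A\subseteq F$ is finite and $A$ is a component of $K-\partial_K A$ with $v\notin A$; letting $B$ be the vertex set of the component of $K[V(K)\setminus A]$ containing $v$, one has $\partial_K B\subseteq\partial_K A$ finite, $K[B]$ connected, and $K[V(K)\setminus B]$ connected as well (it is $K[A]$ with each remaining component of $K[V(K)\setminus A]$ attached by at least one edge), so $D:=\partial_K B$ is a bond with $u$ and $v$ on its two sides. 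The second: \emph{if a finite bond $D$ of the connected graph $H$ is also a bond of the connected graph $G\supseteq H$, then the two components of $H-D$ lie respectively inside the two components of $G-D$.} Here, writing $H_1,H_2$ for the components of $H-D$ and $G_1,G_2$ for those of $G-D$, minimality gives $D=E_H(V(H_1),V(H_2))=E_G(V(G_1),V(G_2))$; setting $P_j:=V(H)\cap V(G_j)$ one checks $E_H(P_1,P_2)=D$, so $H-D$ has no $P_1$--$P_2$ edge; since $D\neq\emptyset$ neither $P_j$ is empty, which forces $\{V(H_1),V(H_2)\}=\{P_1,P_2\}$, i.e.\ after relabelling $V(H_i)\subseteq V(G_i)$.

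With these in hand, injectivity is quick (assuming, as in all the applications, that the ambient graph is connected): given $[r]_H\neq[s]_H$, choose a finite $F\subseteq E(H)$ separating tails of $r,s$ in $H-F$; by the first fact there is a finite bond $D$ of $H$ whose sides separate $r$ from $s$; bond-faithfulness makes $D$ a bond of $G$; and the second fact places the two $H$-sides in distinct components of $G-D$, so $r$ and $s$ share no tail in $G-D$ and $[r]_G\neq[s]_G$. For openness onto the image it suffices to send each basic open set $C([r]_H;F)$ to a relatively open set. Let $A$ be the vertex set of the component of $H-F$ containing a tail of $r$; for every other component $A'$ of $H-F$ that contains a tail of some ray $s_{A'}$, the first fact (applied with this same $F$) yields a finite bond $D_{A'}$ of $H$ with $D_{A'}\subseteq\partial_H A\subseteq F$ that separates $r$ from $s_{A'}$, with $A$ on the $r$-side and $A'$ on the other side. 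Put $F':=\bigcup_{A'}D_{A'}\subseteq F\subseteq E(G)$; this is finite. I would then check that $\varphi\bigl(C([r]_H;F)\bigr)=C([r]_G;F')\cap\varphi(\Omega_E(H))$: the inclusion $\subseteq$ holds because $F'\subseteq F$ forces $H-F\subseteq G-F'$; for $\supseteq$, if a ray $s$ of $H$ has $[s]_G\in C([r]_G;F')$ but its tail lay in a component $A'\neq A$ of $H-F$, then $D_{A'}$ (which is defined, as $A'$ then contains a ray) is a bond of both $H$ and $G$ with $r$ and $s$ on opposite sides of $H-D_{A'}$, so by the second fact and $D_{A'}\subseteq F'$ the tails of $r,s$ lie in different components of $G-F'$, a contradiction; hence the tail of $s$ lies in $A$, i.e.\ $[s]_H\in C([r]_H;F)$.

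Altogether $\varphi$ would be a continuous injection that is open onto its image, hence a topological embedding. I expect the two auxiliary facts to be essentially the only real work: the two-step contraction --- in particular verifying that $K[V(K)\setminus B]$ stays connected --- and the side-matching lemma, which is exactly the point where bond-faithfulness is used, being what allows a single finite bond to carry a separation of edge-ends from $H$ up to $G$. (For a non-connected ambient graph one would first pass to the component containing $r$ and $s$, noting that a bond always lies within a single component.)
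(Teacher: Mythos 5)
Your proof is correct and follows essentially the same route as the paper's: well-definedness and continuity from the inclusion $H-(F\cap E(H))\subseteq G-F$, and, under bond-faithfulness, injectivity and openness onto the image by extracting a finite bond of $H$ from a finite separator and transporting the separation to $G$ via $\varphi\left[C^H(\varepsilon;F)\right]=C^{G}(\varphi(\varepsilon);F')\cap\mbox{im}\,\varphi$. The only real difference is that you make explicit two steps the paper leaves implicit --- that a minimal separator in a connected graph is a bond, and that the two sides of a common bond of $H$ and $G$ land in distinct sides of $G$ --- which adds rigor but not a new method.
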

    \begin{proof}
        The function $\varphi$ is always well-defined, due to the fact that if $s\sim_Er$ on $H$, then, for every finite set $F\subseteq E(G)$, there is a path on $H-(F\cap E(H))$ between tails of $r$ and $s$; and such path is a path on $G-F$ between tails of $r$ and $s$.

        Consider now a basic open $A=C^G(\varepsilon;F)\subseteq\Omega_E(G)$.
        For any ray $r\subseteq H$ such that $[r]_G\in A$ and any ray $s\subseteq H$ such that $[s]_H\in C^H([r]_H;F\cap E(H))$, it is true that $[s]_G\in A$.
        This concludes that $\varepsilon\in\varphi^{-1}[A]$ is, in fact, open and thus $\varphi$ is continuous.
        
        Here onward, we suppose the subgraph to be bond-faithful.

        Let $r$ and $s$ be rays on $H$ such that $r\not\sim_Es$ on $H$.
        Thus, we may take the finite set $F\subseteq E(H)$ that $H-F$ has tails of $r$ and $s$ on different connected components.
        If we consider $C\subseteq F$ a minimal subset that separates $r$ and $s$, then $C$ will be a finite bond of $H$; thus it is a bond on $G$, which concludes that $F$ separates $r$ and $s$ on $G$.
        That is, if two rays on $H$ are edge-equivalent, them they are edge-equivalent on $G$.

        The arguments above about separating rays result in the fact that this function is open on its image, because, for each finite set $F\subseteq E(H)$ and edge-end $\varepsilon\in\Omega_E(H)$, 
        \[\varphi\left[C^H(\varepsilon;F)\right]=C^{G}(\varphi(\varepsilon);F)\cap\mbox{im}\varphi.\]
        Which concludes the fact that it is a topological embedding.
    \end{proof}

    In particular, we have the following result, which will be key to proving Theorem \ref{thm:general}.
    \begin{cor}\label{cor:eesize}
        If $H\subseteq G$ is a bond-faithful subgraph, then $H$ has at most the same amount of edge-ends as $G$.
        In particular, if $G$ has countably many edge-ends, then $H$ also has.
    \end{cor}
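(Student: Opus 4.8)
The plan is to read this off directly from Lemma \ref{lem:subbond}. Since $H\subseteq G$ is assumed bond-faithful, that lemma guarantees that the map $\varphi\colon\Omega_E(H)\to\Omega_E(G)$, $\varphi([r]_H)=[r]_G$, is a topological embedding; in particular it is injective. Hence $|\Omega_E(H)|\le|\Omega_E(G)|$, which is exactly the first assertion. For the ``in particular'' clause, one only needs that an injective image into a countable set is countable: if $\Omega_E(G)$ is countable, then $\varphi$ being injective forces $\Omega_E(H)$ to be countable as well.

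So essentially no new work is required beyond invoking the lemma; the whole content of the corollary is packaged in the embedding statement. The one point worth being explicit about is that ``bond-faithful'' is precisely the hypothesis under which Lemma \ref{lem:subbond} upgrades $\varphi$ from merely continuous to a topological embedding, and an embedding is by definition injective, so the cardinality comparison is legitimate. I do not anticipate any obstacle here: the potential subtlety --- distinct edge-ends of $H$ collapsing to a single edge-end of $G$ --- is exactly what the separation-of-rays argument in the proof of Lemma \ref{lem:subbond} rules out, using that a minimal separator (bond) of $H$ remains a bond of $G$. Thus the corollary is a one-line consequence, and I would simply state: by Lemma \ref{lem:subbond}, $\varphi$ is injective, whence $|\Omega_E(H)|\le|\Omega_E(G)|$, and countability of $\Omega_E(G)$ transfers to $\Omega_E(H)$.
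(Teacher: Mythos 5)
Your proposal is correct and matches the paper's intent exactly: the corollary is stated without proof precisely because it is the immediate consequence of Lemma \ref{lem:subbond} that you describe, namely that bond-faithfulness makes $\varphi$ a topological embedding, hence injective, so $|\Omega_E(H)|\le|\Omega_E(G)|$ and countability transfers. Nothing further is needed.
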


\section{Weak Conjecture For Countably Many Edge-Ends}\label{section:loctocou}

In this section, we will verify that the particular case as seem in \cite{ASSEM25} can be generalized.

\begin{thm}[\cite{ASSEM25}]
    If $G$ is a locally finite graph $2k$-edge-connected and has countably many ends, then it admits an orientation that is $k$-arc-connected
\end{thm}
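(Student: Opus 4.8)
The plan is to certify the orientation through an edge-cut condition and to build it by a limiting argument over finite subgraphs, the only genuine work being the control of the cuts that run out to the ends. First I would translate the goal. Since $G$ is locally finite, ends and edge-ends coincide, so the hypothesis supplies a countable end set. I would record the standard fact (a form of Menger's theorem for edge-disjoint paths, which remains valid for a finite number of paths in an arbitrary digraph) that an orientation $D$ is $k$-arc-connected precisely when $|\partial^+_D X|\ge k$ for every proper nonempty vertex set $X$, where $\partial^+_D X$ denotes the set of arcs leaving $X$. Indeed, the finitely many arcs of a separating set are exactly the out-arcs of the reachable region, so the relevant $X$ are those with $\partial^+_D X$ finite. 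I would then split these into two classes: those with $\partial X$ finite, and those with $\partial^+_D X$ finite but $\partial^-_D X$ (hence $\partial X$) infinite; the latter are ``funnels'' pointing at an end and form the real obstruction.

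For the first class I would fix an exhaustion $G_1\subseteq G_2\subseteq\cdots$ of $G$ by finite connected subgraphs with $\bigcup_n G_n=G$. As $G$ is locally finite, $G-V(G_n)$ has finitely many components; contracting each to a single vertex yields a finite graph $G_n^\ast$. Every edge cut of $G_n^\ast$ lifts to a finite edge cut of $G$, so $G_n^\ast$ inherits $2k$-edge-connectivity, and the finite Nash--Williams corollary supplies a $k$-arc-connected orientation $D_n^\ast$; let $D_n$ be the induced orientation of the edges of $G_n$. Passing to a pointwise convergent subsequence in the compact space $\{0,1\}^{E(G)}$ (equivalently, applying König's lemma, as $E(G)$ is countable) produces an orientation $D$ of $G$. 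For any cut with $\partial X$ finite, once $n$ is large enough that $\partial X\subseteq E(G_n)$, no boundary edge lies inside a contracted component, so each such component falls entirely on one side of $X$; hence the bipartition descends to $G_n^\ast$ and $|\partial^+_{D_n}X|\ge k$. Taking the limit along the subsequence gives $|\partial^+_D X|\ge k$ for every finite-boundary cut.

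The hard part is the second class, and this is exactly where countability of the ends is indispensable. A funnel cut $X$ with $\partial^+_D X$ finite and $\partial X$ infinite records an end toward which the limit orientation pushes all but finitely many of its boundary arcs; the naive compactness limit does nothing to prevent this, since each finite stage only balances finite cuts. I would therefore refine the construction by processing the countably many ends one at a time --- for instance by induction on the Cantor--Bendixson rank of the compact countable end space, or by a plain enumeration with diagonalization --- arranging that at stage $n$ the orientation is committed on a finite region large enough to force at least $k$ arcs crossing in each direction across every end-cut associated with the ends treated so far, while keeping the finite orientations mutually compatible so that the limit still exists. The crux, and the step I expect to be the main obstacle, is to make these local commitments without ever creating a fresh starved funnel at an end handled later; the countable enumeration is precisely what makes such a back-and-forth correction possible, and it is this step that fails when there are uncountably many ends.

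Finally, with $|\partial^+_D X|\ge k$ established for all proper nonempty $X$ (both the finite-boundary case from the exhaustion and the funnel case from the end-by-end refinement), the Menger characterization recorded at the outset yields that $D$ is $k$-arc-connected, completing the proof.
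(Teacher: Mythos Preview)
The paper does not prove this statement; it is quoted verbatim as a result of Assem, Koloschin and Pitz \cite{ASSEM25} and used as a black box input to the proof of Theorem~\ref{thm:general}. So there is no in-paper argument to compare your proposal against.

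As for the proposal itself: your reduction to the cut condition and your treatment of the finite-boundary cuts via an exhaustion, contraction of components, the finite Nash--Williams corollary, and a compactness limit are all standard and correct. The gap is exactly where you say it is. For the ``funnel'' cuts --- those $X$ with $\partial^+_D X$ finite but $\partial X$ infinite --- you offer only a programme (``process the countably many ends one at a time \ldots\ by induction on Cantor--Bendixson rank, or by a plain enumeration with diagonalization'') and immediately concede that the key step, namely committing enough arcs across each end without spoiling the ends handled later, is the main obstacle. That step is the whole content of the theorem, and nothing in your outline pins it down: you have not said what finite object gets frozen at stage $n$, why freezing it forces $|\partial^+_D X|\ge k$ for \emph{every} funnel pointing at the $n$th end (there may be infinitely many such $X$), or why the later stages cannot re-orient edges in a way that reintroduces a starved funnel at an earlier end. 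A bare compactness limit does not respect any of these commitments, so the ``mutually compatible'' clause is doing unexplained work.

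In short, your diagnosis of where the difficulty lies is accurate, but the proposal stops at the point where the actual argument of \cite{ASSEM25} begins; what you have is a plausible scaffold, not a proof.
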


Note that for locally countable graphs, edge-ends and ends are the same thing, this way, the hypothesis of having countably many ends is the same as asking for there being countably many edge-ends, which will be the new hypothesis for the generalization. 
A necessary remark here is that if there are countably many ends, then there is countably many edge-ends, thus, even if it is used edge-ends as a hypothesis, we get the countably many ends case as a corollary.


\begin{thm}\label{thm:general}
    If $G$ is a graph $2k$-edge-connected and has countably many edge-ends, then it admits an orientation $\overrightarrow{G}$ that is $k$-arc-connected.    
\end{thm}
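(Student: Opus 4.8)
The plan is to follow Thomassen's scheme: reduce to the locally finite case and then invoke the theorem of Assem, Koloschin and Pitz \cite{ASSEM25}. Because a vertex of uncountable degree cannot be blown up into a locally finite graph, the reduction must be performed in two stages: first one shows that it is enough to orient \emph{countable} $2k$-edge-connected graphs with countably many edge-ends; then that every such graph can be replaced by a locally finite one, to which \cite{ASSEM25} applies --- recall that for locally finite graphs ``countably many edge-ends'' is the same as ``countably many ends''.

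For the locally finite reduction, let $G$ be countable and $2k$-edge-connected. I would replace every vertex $v$ of infinite (hence countable) degree by a countable, locally finite, internally $2k$-edge-connected gadget $T_v$ with exactly one edge-end and with one degree-one ``port'' for each edge formerly incident with $v$ --- a concrete choice is a ray $x_0x_1x_2\dots$ with $2k$ parallel edges between consecutive vertices and one port attached to each $x_i$ by $2k-1$ parallel edges --- and reattach the old edges at these ports, obtaining a graph $G'$. One checks that $G'$ is $2k$-edge-connected (an edge cut of $G'$ either avoids every gadget and so is an edge cut of $G$, or meets some $T_v$ and so contains an edge cut of $T_v$) and that $G'$ has only countably many edge-ends (there are countably many gadgets, each adding at most one edge-end, while every other edge-end of $G'$ projects to one of the countably many edge-ends of $G$ --- Lemma~\ref{lem:subbond} and Corollary~\ref{cor:eesize} provide the bookkeeping). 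Then \cite{ASSEM25} yields a $k$-arc-connected orientation $\overrightarrow{G'}$, and keeping only the orientation of the non-gadget edges gives an orientation $\overrightarrow{G}$: contracting each $T_v$ turns every directed path of $\overrightarrow{G'}$ into a directed walk of $\overrightarrow{G}$, and $k$ arc-disjoint directed walks from $x$ to $y$ contain $k$ arc-disjoint directed paths from $x$ to $y$, so $\overrightarrow{G}$ is $k$-arc-connected.

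For the reduction to the countable case I would induct on $|E(G)|$, the base case $|E(G)|\le\aleph_0$ being the paragraph above. If $G$ is uncountable, write it as the union of a continuous increasing transfinite chain $(G_\alpha)_{\alpha<\kappa}$ of \emph{bond-faithful} subgraphs, each $2k$-edge-connected and of size $<|E(G)|$, obtained by the usual closing-off procedure iterated $\omega$-many times (at each round adjoin paths of $G$ witnessing $2k$-edge-connectivity between the vertices already present, and paths that stop new finite cuts from separating vertices that $G$ does not separate). By Corollary~\ref{cor:eesize} each $G_\alpha$ again has countably many edge-ends, so the inductive hypothesis applies to it. One then builds $\overrightarrow{G}$ by recursion along the chain: at a successor step extend the orientation already chosen on $G_\alpha$ to a $k$-arc-connected orientation of $G_{\alpha+1}$; at a limit step take the union, which is automatically $k$-arc-connected because any two of its vertices lie together in some earlier $G_\beta$; and finally put $\overrightarrow{G}=\bigcup_{\alpha}\overrightarrow{G_\alpha}$.

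The crux is the successor step, i.e.\ an \emph{extension lemma}: in general a $k$-arc-connected orientation of $G_\alpha$ does not extend to one of the larger $2k$-edge-connected graph $G_{\alpha+1}$, and restricting a $k$-arc-connected orientation of $G_{\alpha+1}$ to $G_\alpha$ may already fail to be $k$-arc-connected, so no plain compactness or selection argument over the chain suffices. The fix I expect to need is to strengthen the statement proved by induction to a ``with prescribed partial orientation'' version and to prove that (again through the gadget construction, now carried out relative to a given partial orientation); the partial orientation transported along the chain must already ``pre-pay'' the finite cuts of $G$ that touch $G_\alpha$, and this is exactly where bond-faithfulness is essential --- by definition every finite bond of $G_\alpha$ is a finite bond of $G$, so once $k$ of its edges have been committed outwards (and inwards) inside $G_\alpha$ no later stage can contradict this. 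Getting that inductive hypothesis right, so that no choice made at stage $\alpha$ can block a forced orientation later and so that it persists through limits, is the main technical difficulty.
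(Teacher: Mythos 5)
Your first reduction (countable to locally finite) is essentially the paper's: the paper expands each infinite-degree vertex into a $2k$-ray with the old edges attached along it, checks $2k$-edge-connectivity and the count of ends of the resulting locally finite graph, applies the result of \cite{ASSEM25}, and pulls the orientation back by restricting to the original edges; your gadget is a cosmetic variant of this and the argument goes through the same way.

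The second reduction is where there is a genuine gap. You organize the uncountable case as a transfinite \emph{chain} of bond-faithful subgraphs and try to extend a $k$-arc-connected orientation along successor steps. As you yourself observe, such an extension lemma is false as stated (an orientation of $G_\alpha$ need not extend, and a good orientation of $G_{\alpha+1}$ need not restrict well), and the proposed fix --- strengthening the inductive statement to a ``prescribed partial orientation'' version --- is never formulated, let alone proved. Since you explicitly flag this as ``the main technical difficulty,'' the proof is incomplete precisely at its crux. The paper avoids the problem entirely by using Laviolette's theorem \cite{LAVIOLETTE2005259}: every graph admits a bond-faithful \emph{decomposition} into countable connected subgraphs, i.e.\ a family $\{G_i\}$ in which every edge of $G$ lies in exactly one $G_i$. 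Because the pieces are pairwise edge-disjoint, each $G_i$ (which is $2k$-edge-connected by bond-faithfulness and has countably many edge-ends by Corollary~\ref{cor:eesize}) can be oriented \emph{independently} by the countable case, and the orientations assemble into an orientation of $G$ with no conflict and no extension step. Global $k$-arc-connectivity then follows by decomposing any $x$--$y$ path into segments each contained in a single $G_i$ and using transitivity of $k$-arc-connectedness. If you replace your chain argument by this decomposition argument, the successor-step difficulty disappears and the rest of your write-up stands.
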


    The first half of the present section will be devoted to weaken the hypothesis of locally finite to countable size.

\begin{lem}\label{lem:locfin}
    If $G$ is a countable graph $2k$-edge-connected and has countably many edge-ends, then it admits an orientation $\overrightarrow{G}$ that is $k$-arc-connected.
\end{lem}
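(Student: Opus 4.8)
The plan is to reduce the countable case to the locally finite case by the classical vertex-splitting technique used by Thomassen, but carried out so as to preserve both $2k$-edge-connectivity and the cardinality of the edge-end space. First I would fix an enumeration $v_0, v_1, v_2, \dots$ of $V(G)$; since $G$ is countable and $2k$-edge-connected, every vertex has finite degree only if $G$ is locally finite, which we are not assuming, so the work is exactly in handling vertices of infinite degree. For each vertex $v$ of infinite degree, I would replace $v$ by a locally finite gadget: take a ray (or a suitable locally finite tree) $T_v$ whose vertices absorb, one at a time, the edges incident with $v$, making each new vertex of $T_v$ incident with finitely many of the original edges plus the one or two "structural" edges of $T_v$, and assign the $T_v$-edges infinite multiplicity (or multiplicity $2k$, whichever makes the bookkeeping cleanest) so that no finite cut can separate $T_v$ internally. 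Call the resulting graph $G'$; it is countable and locally finite.

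The two properties to verify are: (i) $G'$ is $2k$-edge-connected, and (ii) $G'$ has countably many edge-ends, so that the locally finite theorem of \cite{ASSEM25} applies to $G'$ and yields a $k$-arc-connected orientation $\overrightarrow{G'}$. For (i), any finite bond of $G'$ cannot cut through a gadget $T_v$ (the structural edges have large multiplicity), so contracting each $T_v$ back to a point turns a finite edge cut of $G'$ of size $<2k$ into a finite edge cut of $G$ of the same size, contradicting the hypothesis on $G$; hence $G'$ is $2k$-edge-connected. For (ii), the natural surjection $G' \to G$ contracting all gadgets is a bond-faithful-type map (each $T_v$ has no finite bond that is non-trivial, so the contraction does not create new finite separations), and one checks directly that two rays of $G'$ are edge-equivalent in $G'$ iff their images are edge-equivalent in $G$, after deleting the finitely many "vertices" of the finitely many gadgets a given finite edge set can meet — this gives a continuous surjection (in fact a homeomorphism, or at least a finite-to-one surjection) $\Omega_E(G') \to \Omega_E(G)$, and since a ray can ultimately use only finitely many structural edges of any single $T_v$ without repetition, fibers are countable; alternatively one simply observes $|\Omega_E(G')| \le |\Omega_E(G)| \cdot \aleph_0 = \aleph_0$. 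Either way $G'$ has countably many edge-ends.

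Finally I would pull the orientation back: an edge $e$ of $G$ incident with $v$ corresponds to an edge $e$ of $G'$ incident with some vertex of the gadget $T_v$, so $\overrightarrow{G'}$ restricted to the non-structural edges directly induces an orientation $\overrightarrow{G}$ of $G$. To see $\overrightarrow{G}$ is $k$-arc-connected, take any two vertices $x,y$ of $G$ and a bond $B$ of $G$ separating them of size $\ge 2k$; the preimage of $B$ in $G'$ still separates the gadgets of $x$ and $y$ (structural edges contribute nothing new to the cut value in the relevant direction because each $T_v$ is strongly connected under any orientation once we route its large-multiplicity structural edges symmetrically, or — cleaner — because the locally finite theorem already gives us $k$ edge-disjoint directed paths in each direction in $\overrightarrow{G'}$, which we then project to closed directed walks in $\overrightarrow{G}$ and extract $k$ edge-disjoint directed $x$–$y$ paths from). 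The main obstacle I expect is exactly this last bookkeeping step: making precise that the gadget $T_v$, once oriented by the locally finite theorem, behaves like a single strongly connected "super-vertex" so that directed paths in $\overrightarrow{G'}$ project cleanly to directed walks in $\overrightarrow{G}$ without losing arc-connectivity — equivalently, choosing the structural-edge multiplicities and the shape of $T_v$ so that the reduction is genuinely reversible at the level of $k$-arc-connectivity, not merely at the level of $2k$-edge-connectivity.
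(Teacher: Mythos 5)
Your proposal is essentially the paper's own proof: expand each infinite-degree vertex into a ray whose structural edges have multiplicity $2k$, check that the resulting locally finite graph is still $2k$-edge-connected and has countably many (edge-)ends, apply the locally finite theorem of \cite{ASSEM25}, and project the orientation back by restricting to the original edges, noting that directed paths in the expansion project to directed walks in $G$. The only caveat is that you must take multiplicity exactly $2k$ (not infinite, which would destroy local finiteness), and the final step you flag as the ``main obstacle'' is in fact routine once phrased as the paper does: fewer than $k$ deleted edges of $G$ are never structural edges, so a directed path avoiding them in the expansion projects to a directed walk avoiding them in $G$.
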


    In order to prove the Lemma \ref{lem:locfin} we shall construct a new graph $\widetilde{G}$ that will be locally finite $2k$-edge-connected and with countably many ends, along side a way to translate edges of $G$ to $\widetilde{G}$ in a way that a good orientation of $\widetilde{G}$ induces a good orientation on $G$.

    Firstly, one of the fundamental blocks of our construction will be the following ray-like graph.
    \begin{deff}
        For any natural number $k\in\mathbb N$, the $k$-ray is a graph $R$ constructed by taking a ray as base, then replacing each edge by $k$ parallel edges between the respective vertices.
    \end{deff}
    For example $1$-ray is exactly a ray in the normal sense. 
    A $k$-ray is $k$-edge-connected.
    The Figure \ref{fig:kray} illustrates this construction for the case $k=7$.
\begin{figure}[ht]
    \centering
\input{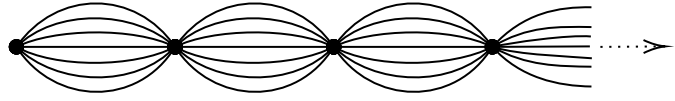}
    \caption{Representing a $7$-ray}
    \label{fig:kray}
\end{figure}

    Let us consider a countable graph $G$ with set of vertices $V$ and set of edges $E$, and a natural number $l\in\mathbb N$.
    For each vertex $v\in V$, consider $E(v)$ the set of edges incident on $v$ and, if $v$ has infinitely many incident edges, enumerate then by
    $E(v)=\{e^v_i:i\in\N\}$.
    We will define the graph $\widetilde{G}(l)$ by preserving the vertices with finitely many incident edges and transforming each other vertex $v$ into a $l$-ray  denoted by $R_v$ with the edge $e^v_i$ incident on the $i$-th element of it (as seem in Figure \ref{fig:exp}).
    For the rest of this proof, we will be considering a countable $2k$-edge-connected graph $G$ and $l=2k$, while we omit it in the notation, that is $\widetilde G=\widetilde G(2k)$.

    The formal construction is as follows.
    Let $V_f$ be the set of all vertices with finitely many incident edges and $V_\ast=V\setminus V_f$.
    The set of vertices of $\widetilde{G}$ is defined as 
    \[\overline{V}=V_f\cup \left(V_\ast\times\N\right).\]
    As for the edges, the ones incident on $v$ in $\widetilde{G}$ are the same as in $G$, if $v\in V_f$.
    For $v\in V_\ast$, we expand it into a $2k$-ray with each vertex associated to an edge incident in $v$, as illustrated in Figure \ref{fig:exp}.

\begin{figure}[ht]
    \centering
\input{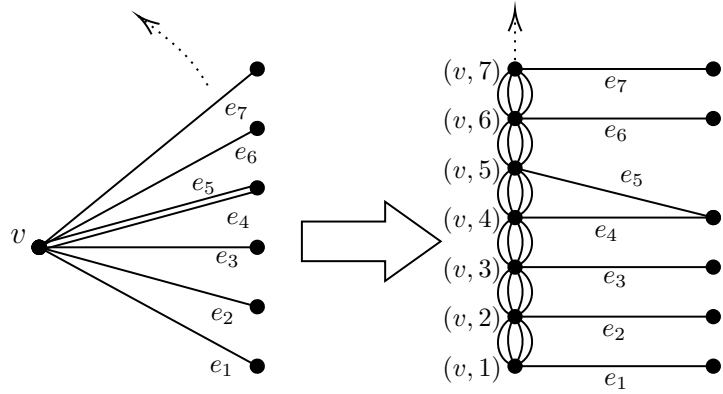}
    \caption{The Expansion of an infinite vertex into a $4$-ray}
    \label{fig:exp}
\end{figure}


    The induced subgraph $\widetilde G[\{v\}\times\N]\subseteq\widetilde G$ is a $2k$-ray that will be called $R_v$, for $v\in V_\ast$.

    \begin{claim}\label{2kcon}
        The graph $\widetilde{G}$ is $2k$-edge-connected.
    \end{claim}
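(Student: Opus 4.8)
The plan is to verify the defining property of $2k$-edge-connectivity head on: for every set $F\subseteq E(\widetilde G)$ with $|F|\le 2k-1$, the graph $\widetilde G-F$ is connected. Only two ingredients are needed, both already available: $G$ itself is $2k$-edge-connected, and each $R_v$ is a $2k$-ray, hence $2k$-edge-connected. The point of expanding into a $2k$-ray (rather than a cheaper gadget) is precisely that no set of fewer than $2k$ edges can disconnect it.

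First I would fix the bookkeeping. Since every expanded vertex keeps its incident edges (merely re-attaching them to the vertices of $R_v$), the edge set of $\widetilde G$ is the disjoint union of $E(G)$ and the ``ray edges''; moreover an edge $e\in E(G)$ is, in $\widetilde G$, an edge joining a vertex of the block replacing its first endpoint to a vertex of the block replacing its second endpoint, where the ``block'' replacing $u$ is $R_u$ if $u\in V_\ast$ and the single vertex $u$ if $u\in V_f$. Given $F$ as above, set $F'=F\cap E(G)$, so $|F'|\le|F|\le 2k-1$ and hence $G-F'$ is connected. On the other side, for each $v\in V_\ast$ we have $|F\cap E(R_v)|\le|F|\le 2k-1$, so, as $R_v$ is $2k$-edge-connected, $R_v-(F\cap E(R_v))$ is connected; and for $v\in V_f$ the block $\{v\}$ is connected trivially. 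In particular, in $\widetilde G-F$ the whole block replacing $v$ lies in a single connected component, which I call $K_v$.

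Then I would connect an arbitrary pair $x,y\in V(\widetilde G)$ inside $\widetilde G-F$. Let $a$ (resp.\ $b$) be the vertex of $G$ whose block contains $x$ (resp.\ $y$), and choose an $a$--$b$ path $v_0v_1\cdots v_n$ in $G-F'$. For each $i<n$ the edge of $G$ used by this path between $v_i$ and $v_{i+1}$ lies outside $F'=F\cap E(G)$, hence outside $F$, and it joins $K_{v_i}$ to $K_{v_{i+1}}$ in $\widetilde G-F$. Since also $x\in K_a=K_{v_0}$ and $y\in K_b=K_{v_n}$, we conclude that $x$ and $y$ lie in the same component of $\widetilde G-F$. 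As $F$ was an arbitrary set of fewer than $2k$ edges, $\widetilde G$ is $2k$-edge-connected.

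I do not expect a serious obstacle here: the argument is essentially bookkeeping, and its only genuine contents are (i) that fewer than $2k$ edges removed from $\widetilde G$ still leave fewer than $2k$ edges removed from each individual ray, so the rays survive as connected blocks, and (ii) that an $F'$-avoiding path in $G$ lifts to a connected walk in $\widetilde G-F$ because consecutive blocks stay joined by the (unremoved) edge of $G$ between them. The only care required is to keep the identification of $E(G)$ with a subset of $E(\widetilde G)$, and hence of $F'$ with $F\cap E(G)$, straight throughout.
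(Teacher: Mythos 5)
Your proof is correct and follows essentially the same route as the paper's: split $F$ into its $G$-edges and its ray-edges, observe that each $2k$-ray survives the deletion as a connected block, and lift a path of $G-F'$ to $\widetilde G-F$ by travelling through the surviving rays. Your write-up is in fact slightly more careful than the paper's in explicitly handling arbitrary vertices of $\widetilde G$ (including interior vertices of the rays $R_v$) rather than only vertices coming from $G$.
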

    
\ifthenelse{\equal{\proofshow}{1}}{
    \begin{proof}
        Let $\overline F$ be a set of edges in $\widetilde G$, with less than $2k$ many elements, and define $F$ as the elements of $\overline F$ that are not in $R_v$ for any $v\in V_\ast$, that is $F=\overline F\cap E(G)$. 
        By hypothesis, $G-F$ is connected.
        We will prove that $\widetilde G-\overline F$ is also connected.
        Firstly, notice that, since $|\overline F|<2k$, the induced subgraph $R_v-\overline F$ is connected, for every $v\in V_\ast$

        Consider a path $v_0e_0v_1...e_lv_{l+1}$ in $G-F$.
        From it we will find a path in $\widetilde G-\overline F$ by walking through the same edges and, between them, through edges of $R_{v_i}$ if necessary, up to the vertex related to the next edge in the path.
        We define, for each $i=0,...,l-1$, a path $P_i$ in $\overline G-\overline F$ as  $e_i$ if the next vertex $v_{i+1}$ is of finite degree, otherwise $P_i$ is $e_i$ concatenated with a path in $R_{v_i}-\overline F$ between $(v_i,r)$ and $(v_i,s)$, where ${e_r}^{v_{i+1}}=e_i$ and ${e_s}^{v_{i+1}}=e_{i+1}$.
        It follows from the construction that the concatenation of $P_0,...,P_l$ is a path in $\overline G-\overline F$.
        Thus, it is $2k$-edge-connected.
    \end{proof}
}{}
    In order to study the ends of the graph $\widetilde{G}$, we may take a ray $R$ in $G$ and define $\overline R$ in $\widetilde G$ analogous to finite paths in the proof of Claim \ref{2kcon}.
    That is, if the edges of the ray $R$ are in order $e_0,e_1,...$, then we define $\overline R$ as the infinite concatenation of $P_0,P_1,...$ as defined above.
    Though it is true that this construction admits ambiguity, it is only on which of the edges between two points in $R_v$ it passes through, a choice that bears no impact in the following proofs, and so we consider any possible $\overline R$.

    We may see that every edge-end of $\widetilde G$ is either given by a $R_v$ for some vertex $V$ or a $\overline R$ for some ray $R$ in $G$, due to the following Claim \ref{claim:edge}.
    Through this notion of transporting path --- or rays --- of $G$ to $\widetilde G$ and its opposite direction, that is, given a path --- or ray --- on $\widetilde G$ we may define a (possibly infinite) trail in $G$ considering the edges on the path that are not inside a $R_v$ for any $v$, inside of which we may find a path --- or ray --- as illustrated by Figure \ref{fig:lev}.

\begin{figure}[ht]
    \centering
\input{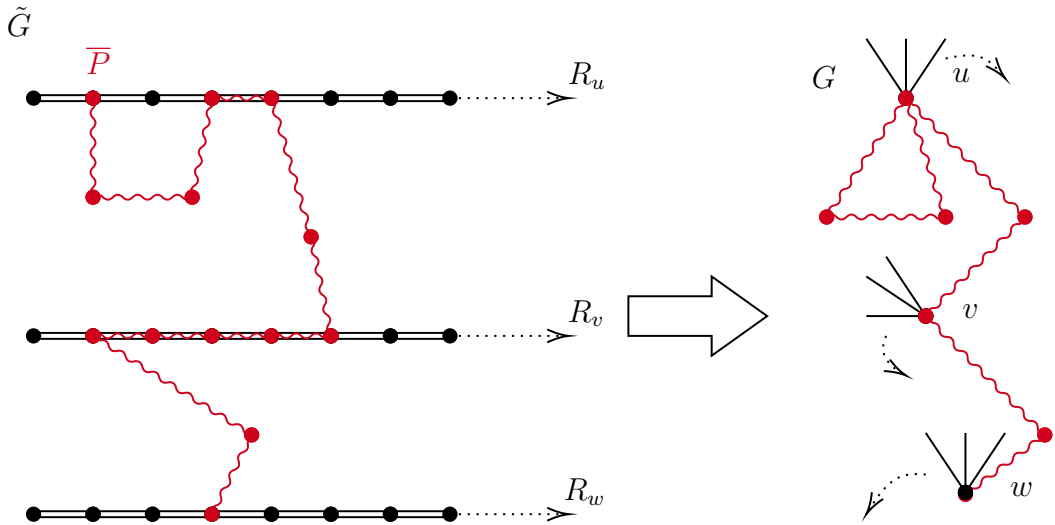}
    \caption{Transforming a path on $\widetilde G$ into a trail of $G$.}
    \label{fig:lev}
\end{figure}

    \begin{claim}\label{claim:edge}
        If $ R$ and $Q$ are edge-equivalent rays in $G$, then $\overline R$ and $\overline Q$ are equivalent in $\widetilde G$.
        Moreover, every ray in $\widetilde G$ either intersects infinitely may times a $R_v$ for some vertex $v$, or a ray $\overline R$, for some ray $R$ in $G$.
    \end{claim}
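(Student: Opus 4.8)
The plan is to prove the two assertions separately, using throughout that $E(\widetilde G)$ is the disjoint union of $E(G)$ with the rung-edge sets $E(R_v)$ ($v\in V_\ast$), and that each $R_v$, being a $2k$-ray, has exactly one infinite component after the removal of any finite set of edges.

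\emph{First assertion.} Fix a finite $\overline F\subseteq E(\widetilde G)$; we must join a tail of $\overline R$ to a tail of $\overline Q$ inside $\widetilde G-\overline F$. Put $F=\overline F\cap E(G)$ and let $S=\{v\in V_\ast:E(R_v)\cap\overline F\neq\emptyset\}$, which is finite because the $E(R_v)$ are pairwise disjoint. For $v\in S$ let $R_v'$ be the unique infinite component of $R_v-\overline F$; then $B_v=\{i\in\N:(v,i)\notin R_v'\}$ is finite, so $F_v=\{e_i^v:i\in B_v\}$ is finite and $\widehat F:=F\cup\bigcup_{v\in S}F_v$ is a finite subset of $E(G)$. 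Since $R$ and $Q$ are rays and $\widehat F,S$ are finite, we may pick tails $R^\ast\subseteq R$ and $Q^\ast\subseteq Q$ using no edge of $\widehat F$ and no vertex of $S$; then $\overline{R^\ast}$ and $\overline{Q^\ast}$ lie entirely in $\widetilde G-\overline F$, as their $G$-edges avoid $F$ and they pass only through $R_v$'s with $v\notin S$.

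By $R\sim_E Q$ in $G$ there is a path $P$ in $G-\widehat F$ from a vertex of $R^\ast$ to a vertex of $Q^\ast$. Transport $P$ to $\widetilde G$ exactly as in the construction of $\overline{(\cdot)}$: at each $v\in V_\ast$ through which $P$ passes, entered along $e_r^v$ and left along $e_s^v$, one must join $(v,r)$ to $(v,s)$ inside $R_v-\overline F$. If $v\notin S$ this graph equals $R_v$ and is connected; if $v\in S$ then $e_r^v,e_s^v\notin\widehat F\supseteq F_v$, hence $(v,r),(v,s)\in R_v'$, and they are joined within the connected graph $R_v'\subseteq\widetilde G-\overline F$. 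Splicing the resulting path onto $\overline{R^\ast}$ and $\overline{Q^\ast}$ (again using that the relevant $R_v$'s are connected in $\widetilde G-\overline F$) produces the desired path. As $\overline F$ was arbitrary, $\overline R\sim_E\overline Q$ in $\widetilde G$.

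\emph{Second assertion.} Let $T=t_0t_1t_2\cdots$ be a ray in $\widetilde G$. If some $R_v$ meets $T$ in infinitely many vertices we are in the first alternative, so assume each $R_v$ — and hence, since a ray repeats no vertex, every vertex of $G$ — contains only finitely many $t_j$. Define $\pi(t_j)=t_j$ if $t_j\in V_f$ and $\pi((v,i))=v$ otherwise, and contract the maximal runs on which $\pi$ is constant (these are precisely the maximal stretches of $T$ inside a single $R_v$, joined by rung-edges). This yields an infinite walk $W$ in $G$ whose traversed edges are exactly the $E(G)$-edges used by $T$ and in which every vertex of $G$ occurs finitely often. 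The subgraph $H\subseteq G$ spanned by the edges of $W$ is therefore connected, infinite, and locally finite, so by König's Lemma it contains a ray $R$. Each edge of $R$ is an $E(G)$-edge used by $T$, and the $E(G)$-edges of $\overline R$ are exactly the edges of $R$; hence $T$ and $\overline R$ share infinitely many edges, so $T$ meets $\overline R$ infinitely often, which is the second alternative.

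I expect the main obstacle to be the first assertion: since $\overline F$ may contain an entire rung of some $R_v$ and thereby disconnect it, the naive transport of a $G$-path can get stuck at such an affected vertex. Enlarging $\overline F$ to the finite $G$-edge set $\widehat F$ that absorbs the finite components of each $R_v-\overline F$, together with the choice of tails $R^\ast,Q^\ast$ avoiding the finitely many affected vertices of $S$, is exactly what makes the transport go through; the second assertion is then a routine projection-plus-König's-Lemma argument.
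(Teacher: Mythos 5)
The paper states Claim~\ref{claim:edge} without giving any proof, so there is nothing to compare against line by line; judged on its own, your argument is correct and supplies exactly the missing verification. For the first assertion, the real difficulty is the one you identify: a finite $\overline F$ can sever a whole bundle of some $R_v$ and strand the transported path, and your enlargement of $F$ to the finite $G$-edge set $\widehat F$ (absorbing the attachment edges $e^v_i$ of the finitely many vertices $(v,i)$ outside the unique infinite component of $R_v-\overline F$), together with choosing the tails $R^\ast,Q^\ast$ to dodge the finitely many affected gadgets in $S$, makes every splice land either in an untouched $R_v$ or in the connected infinite component $R_v'$. The only point you gloss over is that $\overline{R^\ast}$ is a tail of $\overline R$ only up to the choice of parallel edges inside the $R_v$'s and up to the initial passage through $R_{v_0}$; since those gadgets are disjoint from $\overline F$ (their vertices avoid $S$) and the paper explicitly allows any choice of $\overline R$, this is harmless. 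For the second assertion, the projection-to-a-walk argument is sound: the key facts you use --- that each vertex of $R_v$ carries exactly one $E(G)$-edge, so consecutive $E(G)$-edges of $T$ really do project to a walk in $G$; that the finiteness of each fiber forces $W$ to visit infinitely many vertices and keeps the spanned subgraph $H$ locally finite; and that a ray $R\subseteq H$ obtained from K\H{o}nig's Lemma has all its edges among the $E(G)$-edges of $T$, which are precisely the $E(G)$-edges of $\overline R$ --- together give infinitely many common vertices of $T$ and $\overline R$. I see no gap.
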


%
%
    
    As a result, we may conclude that if $G$ is a countable graph $2k$-edge-connected and with countably many edge-ends, then $\widetilde G$ is a locally finite $2k$-edge-connected graph with countably many ends.

    Take $\overrightarrow{\widetilde G}$ an orientation that is $k$-arc-connected.
    Since every edge in $G$ is an edge in $\widetilde G$, the orientation $\overrightarrow{\widetilde G}$ induces an orientation $\overrightarrow{G}$ of $G$.
    In order to see that such orientation is $k$-arc-connected, note that if we delete less than $k$ edges in $G$, for any two points in $\overrightarrow{\widetilde{G}}$ there is still a directed path between then, and this directed path induces a directed path in $\overrightarrow{G}$ minus these edges, which concludes that the orientation is, in fact, $k$-arc-connected. 

    Thus, for any countable graph $G$ that is $2k$-edge-connected and has countably many edge-ends, we have found an orientation the is $k$-arc-connected, which concludes the proof of the Lemma \ref{lem:locfin}.

    To finish the proof of Theorem \ref{thm:general} it remains to be seen the reduction to the countable case, which will follow from a graph decomposition shown by Laviolette in \cite{LAVIOLETTE2005259}.

    \begin{deff}
        A decomposition of the graph $G$ is a family of subgraphs $\{G_i:i\in I\}$ such that every edge of $G$ is an edge of $G_i$ for exactly one $i\in I$.
        A decomposition is bond-faithful if every subgraph of the decomposition is bond-faithful and every finite cut in $G$ is contained in some $G_i$ of the decomposition.
    \end{deff}

    \begin{thm}[\cite{LAVIOLETTE2005259}]
        Every graph admits a bond-faithful decomposition into countable and connected subgraphs.
    \end{thm}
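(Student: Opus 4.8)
The plan is to reduce the theorem to a single \emph{saturation lemma} and then iterate it by transfinite recursion. The lemma I would prove is: given a countable connected subgraph $H_0\subseteq G$ and a set $U\subseteq E(G)$ of forbidden edges with $E(H_0)\cap U=\emptyset$, provided every defective cut of $H_0$ can be repaired inside $G-U$, the graph $H_0$ extends to a countable connected \emph{bond-faithful} subgraph $H$ with $E(H)\cap U=\emptyset$. The construction is an $\omega$-step closure. Having built $H_n$, list the countably many data $(C,u,v)$ where $C\subseteq E(H_n)$ is finite, $u,v\in V(H_n)$, and $C$ separates $u$ from $v$ in $H_n$ but not in $G$; for each such datum add to $H_{n+1}$ a finite $u$–$v$ path of $G-C$ avoiding $U$, and set $H=\bigcup_n H_n$. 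Each $H_n$ is countable and connected, hence so is $H$. To see that $H$ is bond-faithful, take a finite bond $C$ of $H$. Since $C$ is finite it lies in some $E(H_n)$, and because $H_n\subseteq H$ the set $C$ separates in $H_n$ any pair it separates in $H$; were $C$ not a cut of $G$, stage $n$ would have reconnected that pair inside $H_{n+1}\subseteq H$, contradicting that $C$ is a cut of $H$. Thus every finite bond of $H$ is a finite bond of $G$.

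Next I would run a transfinite recursion of length $\kappa=|E(G)|$, dovetailing an enumeration of $E(G)$ with an enumeration of the finite cuts of $G$. At stage $\alpha$ put $U_\alpha=\bigcup_{\beta<\alpha}E(H_\beta)$, seed a new piece with the least uncovered edge together with the least not-yet-covered finite cut of $G$, and apply the saturation lemma with $U=U_\alpha$ to obtain a countable connected bond-faithful $H_\alpha$ disjoint from $U_\alpha$. The containment clause ``every finite cut of $G$ lies in some $G_i$'' is enforced by the rule that whenever saturation would place one edge of a finite cut $C$ of $G$ into the current piece, the whole of $C$ is placed there; this makes the edges of each finite cut co-occur in a single piece. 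Since every piece is countable, the enumerations guarantee that after $\kappa$ steps every edge of $G$ is covered and every finite cut of $G$ is contained in exactly one piece, yielding the desired bond-faithful decomposition.

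\textbf{The main obstacle} is the feasibility of the edge-disjoint reconnection: the saturation step must, for every defective cut $C$ of the current piece, find a reconnecting path inside $G-U_\alpha$, i.e.\ avoiding \emph{all} previously claimed edges, and one must show that deleting the earlier pieces cannot destroy the $G$-connectivity that witnesses $C$'s failure to be a bond. I would attack this through the bond-faithfulness of the earlier pieces: if every $u$–$v$ path in $G-C$ met $U_\alpha$, then the component $S$ of $u$ in $(G-U_\alpha)-C$ would have its $G$-boundary inside $U_\alpha\cup C$, and I would try to express the relevant part of that boundary as a union of finite bonds already assigned to earlier pieces, contradicting how those pieces were built. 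The delicate point is that $\partial_G S$ need not be finite, so this argument must be refined to a finitary/local form — which is precisely where the genuine structural content of Laviolette's proof lies — and combined with a Menger-type compactness argument to extract the concrete avoiding path.

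A secondary difficulty, which I would treat separately, is \emph{countability of the pieces}: the rule that forces whole finite cuts into a single piece could in principle inflate a piece to uncountable size. This reduces to the structural fact that the transitive closure of the relation ``two edges lie in a common finite cut of $G$'' has only countable classes. I would prove this by fixing an auxiliary spanning structure, bounding the number of finite bonds through a given edge, and propagating countability along the closure; once the classes are countable, the saturation keeps each piece countable and the recursion terminates as claimed.
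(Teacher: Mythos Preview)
The paper does not prove this theorem: it is quoted verbatim from Laviolette's 2005 paper and used as a black box, with no argument supplied. There is therefore no ``paper's own proof'' to compare your proposal against.

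As for the proposal itself, you have correctly identified the two hard points and then explicitly left both of them open. The ``main obstacle'' --- showing that reconnection can always be done edge-disjointly from the previously built pieces --- is the entire content of the theorem, and your sketch (express the boundary of a component as a union of earlier finite bonds, invoke Menger-type compactness) is a hope rather than an argument; you yourself say this is ``precisely where the genuine structural content of Laviolette's proof lies.'' The ``secondary difficulty'' is worse: your proposed reduction is false as stated. In the star $K_{1,\kappa}$ with $\kappa$ uncountable, any two edges lie together in a finite cut (take $X$ to be their two leaf endpoints), so the transitive closure of ``share a common finite cut'' has a single class of size $\kappa$. What is true --- and what Laviolette actually uses --- is the corresponding statement for finite \emph{bonds}, and the containment clause in the definition should be read that way; your write-up conflates cuts and bonds at the critical moment. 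So the outline has the right shape (saturate, iterate transfinitely, dovetail edges and finite bonds), but neither of the two substantive steps is carried out, and one of them rests on a claim that is literally false without the cut/bond correction.
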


    Through a proof similar to that presented by Pitz and Stegeman in Theorem 1.2 of \cite{pitz2024}, we can see that this decomposition implies that the general statement can be reduced to the countable case.

    \begin{proof}[Proof of \ref{thm:general}]
        Let $\{G_i:i\in I\}$ be a bond-faithful decomposition of $G$ into countable and connected subgraphs.
        From being bond-faithful, it is true that $G_i$ is a countable graph, that is $2k$-edge-connected and it has countably many edge-ends (due to Corollary \ref{cor:eesize}).
        Next, for each $i\in I$, consider a $k$-arc-connected orientation $\overrightarrow{G_i}$ as guaranteed by Lemma \ref{lem:locfin}.
        Since it is a decomposition, it defines unambiguously an orientation $\overrightarrow G$.

        This orientation is in fact $k$-arc-connected.
        Consider two distinct vertices $x,y\in V(G)$.
        If both are vertices of $G_i$, for any $i\in I$; then they are $k$-arc-connected in $\overrightarrow G$.
        Otherwise, consider any path $P$ from $x$ to $y$ on $G$.
        Consider its decomposition $xP_1v_1P_2...P_ly$ where each $P_j$ is contained in a $G_i$, but then $x$ is $k$-arc-connected to $v_1$, each $v_j$ is $k$-arc-connected to $v_{j+1}$ and $v_{l-1}$ is $k$-arc-connected to $y$.
        Thus, it follows from the transitivity of arc-connectedness that $x$ is $k$-arc-connected to $y$ in $\overrightarrow G$.
    \end{proof}

\section{Reducing the Strong Conjecture to the Locally Finite Case}\label{section:strong}

Not much progress was made when it comes to the study of the strong conjecture, with a recent addition by Pitz and Stegemann in \cite{pitz2024}, where they prove it for rayless graphs.
Here we prove that, as with the weak version of the conjecture, it can be reduced to the locally finite case.
In other words:

\begin{thm}\label{thm:wellb}
    If every locally finite graph has a well-balanced orientation, then every graph has a well-balanced orientation. 
\end{thm}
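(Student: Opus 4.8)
The proof follows the two-step pattern of Theorem~\ref{thm:general}: reduce an arbitrary graph to countable ones via Laviolette's bond-faithful decomposition, and reduce each countable graph to a locally finite one by expanding every infinite-degree vertex into a ray-like gadget. The genuinely new point is that a well-balanced orientation is not required to meet one connectivity threshold but to satisfy $\lambda_{\overrightarrow{G}}(u,v)\ge\lfloor\lambda_G(u,v)/2\rfloor$ (and $|\lambda_{\overrightarrow G}(u,v)-\lambda_{\overrightarrow G}(v,u)|\le 1$) for \emph{every} pair $u,v$ at once; consequently the gadget must be chosen so as not to lose any local edge-connectivity, and the pasting across the decomposition can no longer invoke a uniform connectivity bound on the parts.

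For the reduction to the countable case, fix a bond-faithful decomposition $\{G_i:i\in I\}$ of $G$ into countable connected subgraphs (Laviolette, \cite{LAVIOLETTE2005259}). Two observations make the parts usable. First, bond-faithfulness preserves local edge-connectivity between vertices of a part: for $u,w\in V(G_i)$ we have $\lambda_{G_i}(u,w)=\lambda_G(u,w)$ when finite, and $\lambda_{G_i}(u,w)=\infty$ when $\lambda_G(u,w)=\infty$, since a minimum $u$--$w$ cut of $G_i$ is a bond of $G_i$, hence a bond of $G$, and one checks it still separates $u$ and $w$ in $G$. Second, $\lambda_G$ is ultrametric: $\lambda_G(x,y)\ge\min(\lambda_G(x,z),\lambda_G(z,y))$. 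Granting the countable case, pick a well-balanced orientation $\overrightarrow{G_i}$ of each $G_i$; since the decomposition partitions the edges this defines an orientation $\overrightarrow G$ of $G$. Fix $x,y$ and set $n:=\lambda_G(x,y)$, $n\ge 1$. By an argument in the spirit of the proof of Theorem~1.2 in \cite{pitz2024} --- this is where bond-faithfulness and the ultrametric inequality enter --- one joins $x$ to $y$ by a path in $G$ all of whose part-transition vertices $x=u_0,u_1,\dots,u_m=y$ lie in $\{z:\lambda_G(x,z)\ge n\}$, which by ultrametricity is exactly the class of $x$ under the relation ``$\lambda_G(\cdot,\cdot)\ge n$'' and contains $y$. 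Then $\lambda_G(u_{j-1},u_j)\ge n$, so $\lambda_{\overrightarrow{G_{i_j}}}(u_{j-1},u_j)\ge\lfloor\lambda_{G_{i_j}}(u_{j-1},u_j)/2\rfloor=\lfloor\lambda_G(u_{j-1},u_j)/2\rfloor\ge\lfloor n/2\rfloor$, and transitivity of arc-connectivity gives $\lambda_{\overrightarrow G}(x,y)\ge\lfloor n/2\rfloor$ (the case $n=\infty$ following by letting $n$ range over all finite values). Finally the balance condition is automatic: any orientation satisfies $\lambda_{\overrightarrow G}(x,y)+\lambda_{\overrightarrow G}(y,x)\le\lambda_G(x,y)$ (consider a minimum $x$--$y$ cut and count arcs on each side), which together with the two lower bounds $\ge\lfloor n/2\rfloor$ forces $|\lambda_{\overrightarrow G}(x,y)-\lambda_{\overrightarrow G}(y,x)|\le 1$.

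For the countable case, let $G$ be countable and build a locally finite $\widetilde G$ as in Section~\ref{section:loctocou}: each infinite-degree vertex $v$ becomes a ray $R_v$ with vertices $(v,0),(v,1),\dots$ and the edge $e^v_i$ attached to $(v,i)$, but now the $i$-th gap of $R_v$ carries $i+1$ parallel edges in place of a fixed number. Every $(v,i)$ still has finite degree, so $\widetilde G$ is locally finite. The growth of the multiplicities ensures that no local edge-connectivity is lost: given edge-disjoint $x$--$y$ paths $Q_1,\dots,Q_n$ in $G$ with $n=\lambda_G(x,y)$ (if an endpoint was expanded, reroute all of them to emanate from one ray-vertex $(v,i^\ast)$), lift each $Q_\ell$ through every expanded vertex it meets by walking inside $R_v$ between the two relevant ray-vertices; at most $c+1$ of the lifts cross the $c$-th gap of $R_v$, as they use distinct edges among $e^v_0,\dots,e^v_c$, so the $c+1$ parallel copies keep all lifts edge-disjoint. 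Hence $\lambda_{\widetilde G}(x,y)\ge\lambda_G(x,y)$ for genuine vertices, and for an expanded endpoint $v$ the full local connectivity $\lambda_G(v,y)$ is realised in $\widetilde G$ from a single ray-vertex $(v,i^\ast)$. By hypothesis $\widetilde G$ has a well-balanced orientation $\overrightarrow{\widetilde G}$; orient the edges of $G$ as in $\overrightarrow{\widetilde G}$. Deleting the $R_v$-segments turns any directed walk of $\overrightarrow{\widetilde G}$ into a directed walk of $\overrightarrow G$ with the same (contracted) endpoints, and arc-disjointness survives, so $\lambda_{\overrightarrow G}(x,y)\ge\lambda_{\overrightarrow{\widetilde G}}(x,y)\ge\lfloor\lambda_{\widetilde G}(x,y)/2\rfloor\ge\lfloor\lambda_G(x,y)/2\rfloor$ for every pair (using $(v,i^\ast)$ when an endpoint is expanded), with the balance condition again following from the elementary inequality above. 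Thus $G$ has a well-balanced orientation, completing the reduction.

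The step I expect to be the real obstacle is the pasting in the first reduction: because the parts carry no uniform connectivity bound, one must genuinely exploit bond-faithfulness to choose the $x$--$y$ path so that every part-transition vertex is still $n$-edge-connected to $x$ in $G$ --- this is precisely the delicate content adapted from Pitz and Stegemann. By comparison the expanded-vertex bookkeeping (choosing the ray-vertices $(v,i^\ast)$ from which the full connectivity is visible, and verifying that all the lifts are simultaneously edge-disjoint across the different gadgets) is routine, though it must still be carried out with care.
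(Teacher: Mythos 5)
Your proposal is correct and follows essentially the same route as the paper: Laviolette's bond-faithful decomposition to pass from the general to the countable case (the paper simply cites the Pitz--Stegemann reduction that you re-derive via the ultrametric inequality and their path lemma), and the expanding-ray gadget with growing edge multiplicities to pass from countable to locally finite, your ``at most $c+1$ lifts cross the $c$-th gap'' count being the Menger-style counterpart of the paper's cut-based verification that $\lambda_{\overline G}(u',v')=\lambda_G(u,v)$ for suitable ray-vertices $u',v'$.
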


Firstly it is important to observe that, as stated in \cite{pitz2024}, the general case can be reduced to the countably infinite through bond-faithful decompositions, which is the case for Laviolette's decompositions, as stated by the following theorem.

\begin{thm}[\cite{pitz2024}]
    Let $\mathcal A$ be a class of graphs that is closed under subgraphs. 
    If every countable graph in $\mathcal A$ has a well-balanced orientation, then all graphs in $\mathcal A$ have a well-balanced orientation.
\end{thm}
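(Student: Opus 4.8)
The plan is to combine the Pitz--Stegemann reduction quoted above with an expansion argument in the spirit of Section~\ref{section:loctocou}, adapted so that \emph{all} local edge-connectivities are preserved rather than a single uniform value. Applying the theorem of \cite{pitz2024} to the class $\mathcal A$ of all graphs (which is trivially closed under subgraphs), it suffices to prove that, under the hypothesis, every \emph{countable} graph admits a well-balanced orientation. Note that here, unlike in Theorem~\ref{thm:general}, no control on the ends of the expanded graph is needed, since the hypothesis is assumed for \emph{every} locally finite graph. Fix a countable graph $G$, write $\lambda_G(x,y)$ for the local edge-connectivity of a pair $x,y$, and write $\lambda^{+}_{\overrightarrow G}(x,y)$ for the maximal number of arc-disjoint directed $x$--$y$ paths in an orientation; recall that $\overrightarrow G$ is well-balanced exactly when $\lambda^{+}_{\overrightarrow G}(x,y)\ge\lfloor\lambda_G(x,y)/2\rfloor$ for every ordered pair.

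First I would build a locally finite expansion $\widetilde G$ of $G$. As in Section~\ref{section:loctocou}, every vertex of finite degree is kept, and every vertex $v$ of infinite degree, with incident edges enumerated as $E(v)=\{e^v_i:i\in\N\}$, is replaced by a ray $R_v$ on vertices $(v,1),(v,2),\dots$ with $e^v_i$ attached to $(v,i)$. The necessary adaptation is that the $2k$ parallel edges of the $2k$-ray are replaced by a \emph{growing} multiplicity: between $(v,i)$ and $(v,i+1)$ I place $i$ parallel edges. Then each $(v,i)$ has degree $2i$, so $\widetilde G$ is locally finite and countable, and the edges of $G$ correspond bijectively to the edges of $\widetilde G$ lying outside the rays.

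The core lemma, replacing Claim~\ref{2kcon}, is that the expansion preserves local edge-connectivity between original vertices: $\lambda_{\widetilde G}(x,y)\ge\lambda_G(x,y)$ for all $x,y\in V(G)$. I would prove this by routing: a path of $G$ through an infinite vertex $v$ enters and leaves via two incident edges $e^v_a,e^v_b$ and is lifted to a path through $R_v$ joining $(v,a)$ to $(v,b)$ along a sub-interval of the ray. For a family of edge-disjoint $x$--$y$ paths in $G$, the lifts inside a fixed $R_v$ route along intervals of the ray, and at the segment between $(v,i)$ and $(v,i+1)$ at most $i$ of them can cross, since each crossing path occupies a distinct incident edge $e^v_j$ with $j\le i$. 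As that segment has capacity $i$, the interval-capacity condition is met, so the lifts can be realized edge-disjointly inside each $R_v$; the rays being internally disjoint, the whole family lifts to an edge-disjoint family in $\widetilde G$, giving the claimed inequality.

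Granting the lemma, the conclusion follows smoothly. By hypothesis $\widetilde G$, being locally finite, admits a well-balanced orientation $\overrightarrow{\widetilde G}$, which induces an orientation $\overrightarrow G$ by restricting to the non-ray edges. For original vertices $x,y$, well-balancedness of $\overrightarrow{\widetilde G}$ together with the lemma yields $\lfloor\lambda_{\widetilde G}(x,y)/2\rfloor\ge\lfloor\lambda_G(x,y)/2\rfloor$ arc-disjoint directed $x$--$y$ paths in $\overrightarrow{\widetilde G}$; deleting the ray-internal arcs of each such path leaves a directed $x$--$y$ walk in $\overrightarrow G$ (an incident arc points into each entered ray and another points out of it), and these walks are arc-disjoint because distinct paths use disjoint sets of non-ray arcs. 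Extracting a directed path from each walk gives $\lambda^{+}_{\overrightarrow G}(x,y)\ge\lfloor\lambda_G(x,y)/2\rfloor$, so $\overrightarrow G$ is well-balanced. I expect the main obstacle to be the connectivity-preservation lemma: unlike the weak case the multiplicities cannot be uniform, since the relevant values $\lambda_G(x,y)$ are unbounded, and one must check that the growing-multiplicity ray simultaneously preserves \emph{every} local edge-connectivity while remaining locally finite --- the interval-routing argument is precisely what makes a single choice of multiplicities serve all pairs at once.
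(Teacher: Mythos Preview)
You are proving the wrong statement. The theorem in question is the Pitz--Stegemann reduction from \emph{arbitrary} graphs to \emph{countable} graphs: assuming every countable graph in $\mathcal A$ has a well-balanced orientation, conclude that every graph in $\mathcal A$ does. Your proposal instead establishes the reduction from countable to locally finite, which is precisely Theorem~\ref{thm:wellb} of the present paper, and you explicitly \emph{invoke} the Pitz--Stegemann theorem as a black box in your first step (``Applying the theorem of \cite{pitz2024} to the class $\mathcal A$ of all graphs\dots''). That is circular: you cannot use the statement to be proved as an ingredient in its own proof.

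For the record, the paper does not give a proof of this theorem either; it is merely cited. The argument in \cite{pitz2024} proceeds via Laviolette's bond-faithful decomposition into countable connected subgraphs, orienting each piece well-balanced by hypothesis and using that bond-faithfulness preserves local edge-connectivities to glue the orientations together --- the same mechanism the present paper uses in the proof of Theorem~\ref{thm:general}. Your expanding-ray construction has no role in this step: the reduction general $\to$ countable uses decomposition, not vertex expansion. As a secondary issue, even as a proof of Theorem~\ref{thm:wellb} your write-up is imprecise: when $x$ has infinite degree it is not a vertex of $\widetilde G$, so $\lambda_{\widetilde G}(x,y)$ is undefined; the paper resolves this by passing to suitable representatives $x'\in\pi^{-1}(x)$, $y'\in\pi^{-1}(y)$ high enough up the rays that $\lambda_{\overline G}(x',y')=\lambda_G(x,y)$, which your proposal does not do.
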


In fact, the hypothesis of this theorem could be easily modified to state for ``$\mathcal A$ is closed under bond-faithful subgraphs", as in the structure for edge-ends and other edge-connectivity properties.

This way, in order to conclude Theorem \ref{thm:wellb} it is only necessary to verify the step from locally finite to countable that will be similar to the proof made in Section \ref{section:loctocou}.
Again, we will define a locally finite graph $\overline G$ from the countable graph $G$, with only difference being that instead of the vertices of infinite degree being expanded into a fixed $2k$-ray, they are transformed into an expanding-ray.
The formal construction is similar to the one presented in Section \ref{section:loctocou}, therefore it will be omitted here.

    \begin{deff}
        The expanding-ray is a graph $R$ constructed by taking a ray as base, then replacing, for each $n\in\mathbb N$, the $n^{th}$ edge by $n$ parallel edges between the respective vertices.
    \end{deff}
    The Figure \ref{fig:expray} illustrate the expanding-ray.
        \begin{figure}[ht]
    \centering
\tikzset{every picture/.style={line width=0.75pt}} 

\begin{tikzpicture}[x=0.75pt,y=0.75pt,yscale=-1,xscale=1]

\draw    (60,50) .. controls (80.6,43.75) and (80.6,44.75) .. (100,50) ;
\draw [shift={(100,50)}, rotate = 15.14] [color={rgb, 255:red, 0; green, 0; blue, 0 }  ][fill={rgb, 255:red, 0; green, 0; blue, 0 }  ][line width=0.75]      (0, 0) circle [x radius= 3.35, y radius= 3.35]   ;
\draw [shift={(60,50)}, rotate = 343.12] [color={rgb, 255:red, 0; green, 0; blue, 0 }  ][fill={rgb, 255:red, 0; green, 0; blue, 0 }  ][line width=0.75]      (0, 0) circle [x radius= 3.35, y radius= 3.35]   ;
\draw    (30,50) -- (60,50) ;
\draw [shift={(60,50)}, rotate = 0] [color={rgb, 255:red, 0; green, 0; blue, 0 }  ][fill={rgb, 255:red, 0; green, 0; blue, 0 }  ][line width=0.75]      (0, 0) circle [x radius= 3.35, y radius= 3.35]   ;
\draw [shift={(30,50)}, rotate = 0] [color={rgb, 255:red, 0; green, 0; blue, 0 }  ][fill={rgb, 255:red, 0; green, 0; blue, 0 }  ][line width=0.75]      (0, 0) circle [x radius= 3.35, y radius= 3.35]   ;
\draw    (60,50) .. controls (80.1,55.25) and (80.6,54.75) .. (100,50) ;
\draw [shift={(100,50)}, rotate = 346.24] [color={rgb, 255:red, 0; green, 0; blue, 0 }  ][fill={rgb, 255:red, 0; green, 0; blue, 0 }  ][line width=0.75]      (0, 0) circle [x radius= 3.35, y radius= 3.35]   ;
\draw [shift={(60,50)}, rotate = 14.64] [color={rgb, 255:red, 0; green, 0; blue, 0 }  ][fill={rgb, 255:red, 0; green, 0; blue, 0 }  ][line width=0.75]      (0, 0) circle [x radius= 3.35, y radius= 3.35]   ;
\draw    (100,50) .. controls (119.6,40.25) and (120.6,40.75) .. (140,50) ;
\draw [shift={(140,50)}, rotate = 25.49] [color={rgb, 255:red, 0; green, 0; blue, 0 }  ][fill={rgb, 255:red, 0; green, 0; blue, 0 }  ][line width=0.75]      (0, 0) circle [x radius= 3.35, y radius= 3.35]   ;
\draw [shift={(100,50)}, rotate = 333.55] [color={rgb, 255:red, 0; green, 0; blue, 0 }  ][fill={rgb, 255:red, 0; green, 0; blue, 0 }  ][line width=0.75]      (0, 0) circle [x radius= 3.35, y radius= 3.35]   ;
\draw    (100,50) .. controls (120.1,60.75) and (120.1,60.75) .. (140,50) ;
\draw [shift={(140,50)}, rotate = 331.62] [color={rgb, 255:red, 0; green, 0; blue, 0 }  ][fill={rgb, 255:red, 0; green, 0; blue, 0 }  ][line width=0.75]      (0, 0) circle [x radius= 3.35, y radius= 3.35]   ;
\draw [shift={(100,50)}, rotate = 28.14] [color={rgb, 255:red, 0; green, 0; blue, 0 }  ][fill={rgb, 255:red, 0; green, 0; blue, 0 }  ][line width=0.75]      (0, 0) circle [x radius= 3.35, y radius= 3.35]   ;
\draw    (100,50) -- (140,50) ;
\draw    (140,50) .. controls (149.6,36.25) and (170.6,35.75) .. (180,50) ;
\draw [shift={(180,50)}, rotate = 56.59] [color={rgb, 255:red, 0; green, 0; blue, 0 }  ][fill={rgb, 255:red, 0; green, 0; blue, 0 }  ][line width=0.75]      (0, 0) circle [x radius= 3.35, y radius= 3.35]   ;
\draw [shift={(140,50)}, rotate = 304.92] [color={rgb, 255:red, 0; green, 0; blue, 0 }  ][fill={rgb, 255:red, 0; green, 0; blue, 0 }  ][line width=0.75]      (0, 0) circle [x radius= 3.35, y radius= 3.35]   ;
\draw    (140,50) .. controls (150.1,64.25) and (169.6,64.75) .. (180,50) ;
\draw [shift={(180,50)}, rotate = 305.19] [color={rgb, 255:red, 0; green, 0; blue, 0 }  ][fill={rgb, 255:red, 0; green, 0; blue, 0 }  ][line width=0.75]      (0, 0) circle [x radius= 3.35, y radius= 3.35]   ;
\draw [shift={(140,50)}, rotate = 54.67] [color={rgb, 255:red, 0; green, 0; blue, 0 }  ][fill={rgb, 255:red, 0; green, 0; blue, 0 }  ][line width=0.75]      (0, 0) circle [x radius= 3.35, y radius= 3.35]   ;
\draw    (140,50) .. controls (160.1,45.75) and (160.6,45.25) .. (180,50) ;
\draw [shift={(180,50)}, rotate = 13.76] [color={rgb, 255:red, 0; green, 0; blue, 0 }  ][fill={rgb, 255:red, 0; green, 0; blue, 0 }  ][line width=0.75]      (0, 0) circle [x radius= 3.35, y radius= 3.35]   ;
\draw [shift={(140,50)}, rotate = 348.06] [color={rgb, 255:red, 0; green, 0; blue, 0 }  ][fill={rgb, 255:red, 0; green, 0; blue, 0 }  ][line width=0.75]      (0, 0) circle [x radius= 3.35, y radius= 3.35]   ;
\draw    (140,50) .. controls (160.1,54.75) and (161.1,55.25) .. (180,50) ;
\draw [shift={(180,50)}, rotate = 344.48] [color={rgb, 255:red, 0; green, 0; blue, 0 }  ][fill={rgb, 255:red, 0; green, 0; blue, 0 }  ][line width=0.75]      (0, 0) circle [x radius= 3.35, y radius= 3.35]   ;
\draw [shift={(140,50)}, rotate = 13.3] [color={rgb, 255:red, 0; green, 0; blue, 0 }  ][fill={rgb, 255:red, 0; green, 0; blue, 0 }  ][line width=0.75]      (0, 0) circle [x radius= 3.35, y radius= 3.35]   ;
\draw    (180,50) .. controls (181.6,41.75) and (197.1,39.75) .. (200,40) ;
\draw    (180,50) .. controls (183.6,59.75) and (197.1,59.75) .. (200,60) ;
\draw    (180,50) .. controls (188.1,43.75) and (193.6,44.75) .. (200.1,44.25) ;
\draw    (180,50) -- (199.6,50.75) ;
\draw    (180,50) .. controls (185.6,53.25) and (193.2,55.5) .. (199.6,55.25) ;
\draw  [dash pattern={on 0.84pt off 2.51pt}]  (205.5,50) -- (232.6,49.77) ;
\draw [shift={(234.6,49.75)}, rotate = 179.51] [color={rgb, 255:red, 0; green, 0; blue, 0 }  ][line width=0.75]    (10.93,-3.29) .. controls (6.95,-1.4) and (3.31,-0.3) .. (0,0) .. controls (3.31,0.3) and (6.95,1.4) .. (10.93,3.29)   ;

\end{tikzpicture}
    \caption{Representing an expanding-ray.}
    \label{fig:expray}
\end{figure}
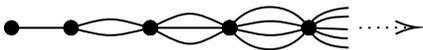

With the graph $\overline G$ we are able to start proving Theorem \ref{thm:wellb}.

\begin{proof}[Proof of Theorem \ref{thm:wellb}]
Take $\overline V=V(\overline G)$ and $\overline E=E(\overline G)$.
Define the function $\pi:\overline V\to V$ by $\pi(x)=x$ if $x\in V_f$ and $\pi(x,m)=x$ if $x\in V_\ast$ and $m\in\N$.
\begin{claim}
    For $u,v\in V$, there is $u^\prime\in\pi^{-1}[u]$ and $v^\prime\in\pi^{-1}[v]$ such that they have the same edge-connectivity in $\overline G$ as $u$ and $v$ have on $G$, that is, \[\lambda_{\overline G}(u^\prime,v^\prime)=\lambda_G(u,v).\]
\end{claim}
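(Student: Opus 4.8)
The plan is to treat separately the (substantive) case $m:=\lambda_G(u,v)<\infty$ and the case $\lambda_G(u,v)=\aleph_0$ (recall $G$ is countable). Throughout I use two structural features of $\overline G$: each expanded ray $R_x$ ($x\in V_\ast$) is connected and consists solely of the newly added parallel edges, none of which belongs to $E(G)\subseteq E(\overline G)$; and contracting every $R_x$ to a point recovers $G$. By Menger's theorem, when $m<\infty$ this number is both the least size of a finite $u$--$v$ edge cut in $G$ and the maximum number of edge-disjoint $u$--$v$ paths in $G$. So fix edge-disjoint $u$--$v$ paths $P_1,\dots,P_m$ in $G$ and lift each $P_i$ to a path $\overline{P_i}$ in $\overline G$ as in Section~\ref{section:loctocou}: whenever $P_i$ runs through a vertex $x\in V_\ast$, entering along $e^x_p$ and leaving along $e^x_q$, route it monotonically inside $R_x$ from $x_{\min(p,q)}$ to $x_{\max(p,q)}$.

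The point is to perform these lifts for all $i$ at once so that, inside each fixed $R_x$, the resulting sub-paths are pairwise edge-disjoint. Note that a sub-path crosses the bundle of parallel edges between $x_j$ and $x_{j+1}$ only if it enters or leaves $x$ along one of $e^x_1,\dots,e^x_j$; as $P_1,\dots,P_m$ are edge-disjoint, those $j$ edges are used by at most $j$ of the paths, so at most $j$ of the sub-paths cross that bundle --- which, by the definition of the expanding-ray, has multiplicity exactly $j$. Viewing the sub-paths as intervals of $\mathbb N$ and processing them in increasing order of left endpoint, a greedy colouring assigns each sub-path a ``level'' no larger than the index of its leftmost crossed bundle (otherwise $j+1$ sub-paths would cross bundle $j$); routing the level-$\ell$ sub-paths along the $\ell$-th parallel edge of each bundle they meet then makes them edge-disjoint in $R_x$.

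It remains to reconcile the endpoints. If $u\in V_\ast$, the paths attach to $R_u$ at the distinct vertices $u_{a_1},\dots,u_{a_m}$; put $a^\ast=\max_i a_i$ and prepend to each $\overline{P_i}$ the monotone path in $R_u$ from $u_{a^\ast}$ to $u_{a_i}$. These prepends are again intervals, those meeting the bundle between $u_j$ and $u_{j+1}$ (for $j<a^\ast$) being exactly the ones with $a_i\le j$, hence at most $j$ of them, so the same greedy routing makes them edge-disjoint using the $j$ parallel edges available; and since $\overline{P_i}$ uses no edge of $R_u$ outside its prepend, no new conflicts arise. Do the symmetric construction at $v$, with $b^\ast=\max_i b_i$. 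Setting $u'=u_{a^\ast}$ (and $u'=u$ when $u\in V_f$) and $v'=v_{b^\ast}$ (and $v'=v$ when $v\in V_f$), the modified lifts form $m$ pairwise edge-disjoint $u'$--$v'$ paths in $\overline G$, so $\lambda_{\overline G}(u',v')\ge m$. For the reverse inequality, take a finite $u$--$v$ edge cut $C$ in $G$ with $|C|=m$. Since $C\subseteq E(G)$, every $R_x$ is untouched, hence still connected, in $\overline G-C$, so contracting the $R_x$'s identifies the components of $\overline G-C$ with those of $G-C$; thus $C$ separates $\pi^{-1}[u]$ from $\pi^{-1}[v]$ in $\overline G$, in particular $u'$ from $v'$, whence $\lambda_{\overline G}(u',v')\le m$. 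Combining the two bounds, $\lambda_{\overline G}(u',v')=m=\lambda_G(u,v)$.

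The crux is the simultaneous edge-disjoint routing inside the expanded rays, and especially the prepending step at $u$ and $v$: routing every lift back to the \emph{farthest} attachment vertex $u_{a^\ast}$ (rather than to $u_1$, whose degree is only $2$) is precisely what makes the demand ``at most $j$ sub-paths across the $j$-th bundle'' match the supply ``$j$ parallel edges there'', which is why an expanding-ray rather than a fixed-width ray must be used here. Finally, if $\lambda_G(u,v)=\aleph_0$, no literal equality is attainable since $\overline G$ is locally finite, but applying the argument above to an $m$-element subfamily of edge-disjoint $u$--$v$ paths for each finite $m$ yields $\sup\{\lambda_{\overline G}(u',v'):u'\in\pi^{-1}[u],\ v'\in\pi^{-1}[v]\}=\aleph_0$, which is exactly what the subsequent reduction of Theorem~\ref{thm:wellb} to the locally finite case requires.
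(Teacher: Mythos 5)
Your proof is correct, but it takes the dual route to the paper's. The paper argues on the cut side: it fixes $u^\prime=(u,m)$ and $v^\prime=(v,m)$ for $m=\lambda_G(u,v)$ and shows that no edge set $F^\prime\subseteq E(\overline G)$ with $|F^\prime|<m$ separates them, by replacing each edge of $F^\prime$ lying in some $R_x$ with one of the first few attachment edges $e^x_i$ of $x$ (yielding $F\subseteq E(G)$ with $|F|\le|F^\prime|$), taking a single $u$--$v$ path in $G-F$, and lifting it to $\overline G-F^\prime$ via the observation that the tail $\overline G[\{x\}\times\{j\in\N:j\geq i\}]$ is $i$-edge-connected. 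You instead argue on the path side: Menger supplies $m$ edge-disjoint $u$--$v$ paths in $G$, which you lift simultaneously to $m$ edge-disjoint $u^\prime$--$v^\prime$ paths in $\overline G$ by a greedy interval-colouring routing inside each expanded ray, plus the nesting argument for the prepends at the endpoints. Both proofs hinge on the same counting coincidence --- at most $j$ lifted paths ever need to cross the $j$-fold bundle between the $j$-th and $(j+1)$-th vertices of $R_x$ --- but the paper only ever lifts one path at a time, which makes its version shorter and free of both Menger and the colouring step, whereas yours is more constructive and makes the role of the growing multiplicities (and why a fixed-width $k$-ray would not do) more transparent. Your closing paragraph also correctly flags something the paper's statement glosses over: since $\overline G$ is locally finite, the literal equality is unattainable when $\lambda_G(u,v)=\aleph_0$, and only the supremum version --- which is all the application to well-balanced orientations needs --- survives in that case.
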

        \begin{figure}[ht]
    \centering
\input{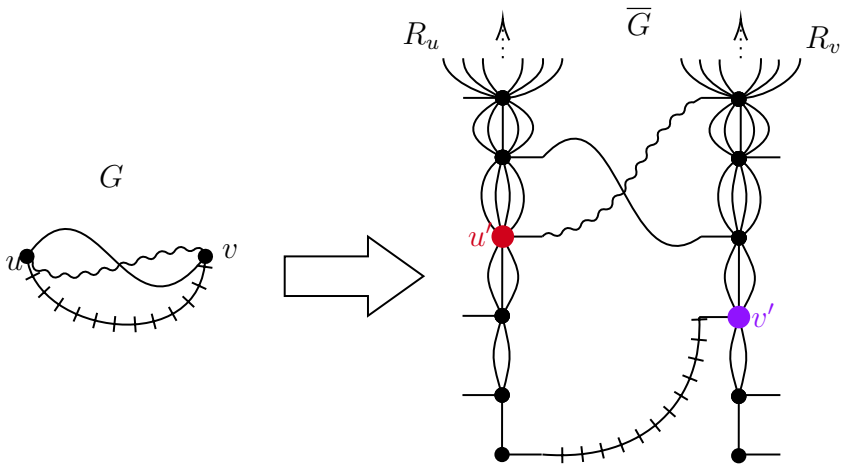}
    \caption{Example of possible choices of $u^\prime$ and $v^\prime$.}
    \label{fig:exempath}
\end{figure}
\begin{proof}
    It follows from the construction that $\lambda_{\overline G}(u^\prime,v^\prime)\leq\lambda_G(u,v)$ every time that $u=\pi(u^\prime)$ and $v=\pi(v^\prime)$.
    Therefore, the claim is that $\lambda(u,v)$ is achievable by some $u^\prime$ and $v^\prime$.
    Take $m=\lambda(u,v)$. 
    Since $\pi$ is the identity map when restricted to $V_f$, $u^\prime$ (and $v^\prime$) may only vary if $u\in V_\ast$ (respec. $v\in V_\ast$).
    In that case, we take $u^\prime=(u,m)$ (respec. $v^\prime=(v,m)$).

    Consider a finite set of edges $F^\prime\subseteq\overline E$ of size less than $m$.
    We define $F\subseteq E$ as the set of edges $e\in E$ that are elements of $F^\prime$ and, for each $x\in V_\ast$, the first $n$ edges $e^x_i$, where $n$ is the amount of edges of $F^\prime$ in $R_x$.
    Such $F$ is a set of edges of $G$ with size at most $|F^\prime|$, therefore there is a path in $G-F$ that connects $u$ and $v$. 
    From this path we may construct a path in $\overline G- F^\prime$, similarly to what was done in the Claim \ref{2kcon}, with added complexity in order to prove that we may take the parts inside $R_x$, for some vertices $x$.

    If $e^x_i\not\in F$, for $x\in V_\ast$ and $i\in \N$, then the induced subgraph $\overline G[\{x\}\times\{j\in\N:j\geq i\}]- F^\prime$ is connected.
    In fact, if $e^x_i\not\in F$, then $F^\prime$ contains less than $i$ edges of $R_x$, and the subgraph $\overline G[\{x\}\times\{j\in\N:j\geq i\}]$ is $i$-edge-connected.

    Thus, we may define a path in $\overline G- F^\prime$ between $u^\prime$ and $v^\prime$.
    
\end{proof}

A well-balanced orientation of $\overline G$ induces an orientation $\overrightarrow{G}$, that will be well-balanced, since for every $u,v\in V$
\[\left\lfloor \tfrac{1}{2}\lambda_G(u,v)\right\rfloor=\left\lfloor\tfrac{1}{2}\lambda_{\overline G}(u^\prime,v^\prime)\right\rfloor\leq\overrightarrow{\lambda_{\overline G}}(u^\prime,v^\prime)\leq\overrightarrow{\lambda_G}(u,v)\]
for some $u^\prime\in\pi^{-1}[u]$ and $v^\prime\in\pi^{-1}[v]$.
\end{proof}

It is interesting also to remark that the analogous results around the number of edge-ends also follows, that is:
\begin{lem}
    If $G$ is a countably infinite graph with countably many edge-ends, then $\overline G$ is a locally finite graph with countably many ends.
\end{lem}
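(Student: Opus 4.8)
The plan is to follow the strategy of Section~\ref{section:loctocou}, replacing the $2k$-rays by expanding-rays and adapting Claim~\ref{claim:edge} accordingly. First, $\overline G$ is locally finite: each vertex of $V_f$ retains its finitely many incident edges, while each vertex $(v,i)$ of an expanding-ray $R_v$ is incident only to the single $G$-edge $e^v_i$ and to the finitely many parallel edges of the (at most two) bundles of $R_v$ meeting it. As noted earlier, in a locally finite graph ends and edge-ends coincide, so it suffices to show that $\Omega_E(\overline G)$ is countable.

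Transport rays between $G$ and $\overline G$ as in Section~\ref{section:loctocou}, writing $\overline R$ for the lift to $\overline G$ of a ray $R$ of $G$. I would call an edge-end of $\overline G$ \emph{internal} if it is represented by a ray with a tail inside some $R_v$, and \emph{external} otherwise. Since each expanding-ray $R_v$ is a connected, one-ended graph and $R_v\subseteq\overline G$, it contributes exactly one internal edge-end $[R_v]_{\overline G}$; as $V_\ast$ is countable there are at most $\aleph_0$ internal edge-ends. For the external ones the key step is the analogue of Claim~\ref{claim:edge}: \emph{(i) if $R\sim_E Q$ in $G$ then $\overline R\sim_E\overline Q$ in $\overline G$; and (ii) every external edge-end of $\overline G$ equals $[\overline R]_{\overline G}$ for some ray $R$ of $G$.} Part~(i) is proved by the lifting argument already used in the proof of Theorem~\ref{thm:wellb}: given a finite $F'\subseteq E(\overline G)$ put $F=(F'\cap E(G))\cup\bigcup_{x\in V_\ast}\{e^x_1,\dots,e^x_{n_x}\}$ with $n_x=|F'\cap E(R_x)|$, so $|F|\le|F'|$; a path in $G-F$ joining tails of $R$ and $Q$ lifts to a path in $\overline G-F'$ joining tails of $\overline R$ and $\overline Q$, since whenever the lift meets an $R_x$ it enters and exits at positions exceeding $n_x$, and the subgraph of $R_x$ on positions $>n_x$ is $(n_x+1)$-edge-connected, hence remains connected after deleting the at most $n_x$ edges of $F'$ lying in $R_x$.

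For part~(ii), let $\varepsilon$ be an external edge-end and $\rho$ a representative. Since $\overline G$ with all $G$-edges deleted is a disjoint union of the graphs $R_v$ together with isolated vertices, and no tail of $\rho$ lies inside a single $R_v$, the ray $\rho$ must use infinitely many $G$-edges; these trace an infinite trail $T=h_1h_2\cdots$ of $G$. If the vertices met by $T$ formed a finite set $S$, then $T$ would use infinitely many parallel edges between some pair $u,w\in S$; these attach to infinitely many distinct vertices $(u,i)$, so $\rho$ visits arbitrarily high positions of $R_u$, and a short argument then shows $\varepsilon=[R_u]_{\overline G}$, contradicting that $\varepsilon$ is external. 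Hence $T$ meets infinitely many vertices, so deleting closed subwalks of $T$ yields a ray $R$ of $G$, and one checks $\overline R\sim_E\rho$ in $\overline G$ (past any finite edge set both rays continue along a common cofinal sequence of $G$-edges). Combining (i) and (ii), $[R]_G\mapsto[\overline R]_{\overline G}$ is a well-defined map from $\Omega_E(G)$ whose image contains every external edge-end; since $\Omega_E(G)$ is countable, so is the set of external edge-ends, and therefore $\Omega_E(\overline G)$ — hence the set of ends of $\overline G$ — is countable.

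I expect part~(ii) to be the main obstacle: one must rule out the possibility that $\rho$ weaves through a fixed $R_v$ infinitely often, at ever-larger positions, without ever having a tail inside it, and then verify that the ray extracted from the infinite trail $T$ genuinely captures the edge-end of $\rho$. Both points are where the specific geometry of the expanding-ray — its bundles growing without bound, so that any finite edge set only interferes with a bounded initial segment of each $R_v$ — does the work.
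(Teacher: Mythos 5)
Your argument is essentially the one the paper intends: the paper offers no proof of this lemma beyond remarking that the analogue of Claim \ref{claim:edge} carries over to expanding-rays, and your split into internal and external edge-ends, together with the lifting of a finite edge set $F'$ to $F=(F'\cap E(G))\cup\{e^x_1,\dots,e^x_{n_x}\}$ and the $i$-edge-connectivity of the tail of each $R_x$, is exactly that adaptation. The only step to tighten is ``$T$ meets infinitely many vertices, so deleting closed subwalks yields a ray'': a trail can meet infinitely many vertices while still revisiting one vertex $u$ infinitely often (so loop-erasure stalls), but in that case $T$ uses infinitely many distinct edges at $u$, which attach at unboundedly high positions of $R_u$, and your own argument for the finite-$S$ case then shows $\varepsilon=[R_u]_{\overline G}$ is internal; once that case is excluded every vertex occurs finitely often in $T$ and the extraction of $R$ goes through.
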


Therefore, the particular case of countably many edge-ends can also be reduced to the locally finite case.

\begin{prop}
    If every locally finite graph with countably many ends has a well-balanced orientation, then every graph with countably many edge-ends has a well-balanced orientation.
\end{prop}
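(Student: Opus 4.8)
The plan is to assemble three facts that are already in place: the reduction of the well-balanced orientation problem to countable graphs through bond-faithful decompositions, the expansion construction $G\mapsto\overline G$ used in the proof of Theorem \ref{thm:wellb}, and the preservation of countability of the edge-end set under that construction recorded in the Lemma just above.

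First I would pass to the countable case. Let $\mathcal A$ denote the class of all graphs having countably many edge-ends. By Corollary \ref{cor:eesize}, any bond-faithful subgraph of a member of $\mathcal A$ is again in $\mathcal A$, so $\mathcal A$ is closed under bond-faithful subgraphs. Applying the bond-faithful-subgraph strengthening of Pitz and Stegemann's theorem (the variant pointed out right after its statement), it is enough to produce a well-balanced orientation of every \emph{countable} graph $G$ with countably many edge-ends.

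So let $G$ be such a graph and build the locally finite graph $\overline G$ exactly as in the proof of Theorem \ref{thm:wellb}: the vertices of finite degree are kept, and each vertex $v$ of infinite degree is replaced by an expanding-ray $R_v$ carrying the edge $e^v_i$ at its $i$-th vertex. By the Lemma preceding the statement, $\overline G$ is locally finite with countably many ends; hence, by hypothesis, it admits a well-balanced orientation $\overrightarrow{\overline G}$. Since $E(G)\subseteq E(\overline G)$, this orientation induces by restriction an orientation $\overrightarrow G$ of $G$.

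It then remains to check that $\overrightarrow G$ is well-balanced, which is the final display of the proof of Theorem \ref{thm:wellb}. For $u,v\in V(G)$ the Claim in that proof gives $u^\prime\in\pi^{-1}[u]$ and $v^\prime\in\pi^{-1}[v]$ with $\lambda_{\overline G}(u^\prime,v^\prime)=\lambda_G(u,v)$; since any directed $u^\prime$--$v^\prime$ path in $\overrightarrow{\overline G}$ projects (collapsing the excursions inside the various $R_x$, which are themselves directed subpaths) to a directed $u$--$v$ path in $\overrightarrow G$, and this remains true after deleting any set of arcs, one obtains
\[
\left\lfloor\tfrac12\lambda_G(u,v)\right\rfloor=\left\lfloor\tfrac12\lambda_{\overline G}(u^\prime,v^\prime)\right\rfloor\le\overrightarrow{\lambda_{\overline G}}(u^\prime,v^\prime)\le\overrightarrow{\lambda_G}(u,v),
\]
as required. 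The one point to be careful with --- and essentially the only one, since everything else merely transcribes the arguments already given for Theorems \ref{thm:general} and \ref{thm:wellb} --- is the very first step: one must be sure that the class ``countably many edge-ends'' legitimately plugs into the Pitz--Stegemann reduction, i.e. that each countable connected block of a Laviolette decomposition of a graph in $\mathcal A$ still has countably many edge-ends, which is precisely the content of Corollary \ref{cor:eesize}.
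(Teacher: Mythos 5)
Your proposal is correct and follows essentially the same route the paper intends for this Proposition: reduce to the countable case via the bond-faithful variant of the Pitz--Stegemann reduction (using Corollary \ref{cor:eesize} to see that the class of graphs with countably many edge-ends is closed under bond-faithful subgraphs), then apply the expanding-ray construction $\overline G$ together with the preceding Lemma and the connectivity-preserving Claim from the proof of Theorem \ref{thm:wellb}. Nothing is missing.
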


\section{Topological Edge-Connectivity}\label{section:topcon}

    When dealing with paths and cycles in infinite graphs, a common method is to expand the definition to {\it topological paths} and {\it infinite cycles} (\cite{DIESTEL2004835}), that allows the paths to pass through the ends of the graph.
    This section is to present the relevant definitions and prove results about topological edge-connectivity, which will be used to both state and prove our alternative to the Strong Conjecture

    Firstly, we may understand a graph $G$ as a $1$-complex where the $0$-skeleton is the set of vertices and each edge is the interval $[0,1]$ with extremities in the two incident vertices.
    Although which vertex is $0$ and which is $1$ for un-oriented graphs does not influence the out come, when we consider an orientation, we take $0$ as the tail of the arc and $1$ as its head.
    A half-edge in this case is an interval $[0,t[$ or $]t,1]$ in an edge $e$ for $0<t<1$.

    Now, with the notion above and the edge-end space of a graph, we define the topological space where our study will take place.
    
    \begin{deff}
        Given a graph $G$, we define the topological space $\ETOP(G)$, with the set being $G\cup\Omega_E(G)$, where $G$ is seen as a $1$-complex.
        For the vertices and edge-ends, the basic open sets are of the type
        \[C\cup\Omega_E(F,C)\cup S,\]
        where $F$ is a finite set of edges, $C$ is a connected component of $G-F$ and $S$ is the union of half-edges of the border of $C$.
        The topology in the interior of edges is the usual interval topology.

        We also define $\ETOP^\prime(G)$ the topological space resulting of the identification of inseparable points of $\ETOP(G)$, that is, if every neighborhood of one point contains the other and vice-versa. 
        In fact, this happens exactly when either a point is an end and the other is a vertex that edge-dominates it, or both points are infinitely edge-connected vertices.
    \end{deff}
    For a more in-depth look at these topologies, we recommend the reading of \cite{GEORGAKOPOULOS20111523}.

    There is a relationship between subgraphs and subspaces of this topology, but similarly to the proof of the Lemma \ref{lem:subbond}, when the subgraph preserves separator sets, it appears as a subspace.

    \begin{lem}\label{lem:subbondtop}
        Let $H\subseteq G$ be a subgraph. 
        The function $\iota:\ETOP^\prime(H)\to\ETOP^\prime(G)$ that is the identity in $H$ and $\varphi$ (of Lemma \ref{lem:subbond}) on $\Omega_E(H)$ is well-defined and continuous.
        Moreover, if $H\subseteq G$ is bond-faithful, then $\iota$ is a topological embedding.
    \end{lem}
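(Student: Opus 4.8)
The plan is to follow the three-step pattern of the proof of Lemma~\ref{lem:subbond}---well defined, continuous, then (for bond-faithful $H$) a topological embedding---but to carry it out first on the un-quotiented spaces $\ETOP(H)$ and $\ETOP(G)$ and only afterwards descend to $\ETOP^\prime$. First I set up the map $\widetilde\iota\colon\ETOP(H)\to\ETOP(G)$ which is the identity on the $1$-complex $H$ (sitting inside the $1$-complex $G$) and equals $\varphi$ on $\Omega_E(H)$; it is well defined because $\varphi$ is, by Lemma~\ref{lem:subbond}. Continuity of $\widetilde\iota$ is checked exactly as there: for a basic open $U=C\cup\Omega_E(F,C)\cup S$ of $\ETOP(G)$ one puts $F'=F\cap E(H)$, lets $C'$ be the component of $H-F'$ through the relevant vertex (or through a tail of the relevant ray), observes $C'\subseteq C$, and concludes that the basic open $C'\cup\Omega_E(F',C')\cup S'$ of $\ETOP(H)$ is sent into $U$; the interiors of edges and the boundary half-edges are dealt with by the usual interval topology, and there are at most $2|F'|$ boundary vertices to track, so this part is routine. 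Since $\ETOP(H)\to\ETOP^\prime(H)$ is a quotient map, to obtain a continuous $\iota$ on the quotient it suffices to know $\widetilde\iota$ carries inseparable pairs of $\ETOP(H)$ to inseparable (or equal) pairs of $\ETOP(G)$ (write $\approx$ for inseparability); and this is immediate from continuity, since if every neighbourhood of $x$ in $\ETOP(H)$ contains $y$ then for any neighbourhood $O$ of $\widetilde\iota(x)$ the set $\widetilde\iota^{-1}(O)$ is a neighbourhood of $x$, hence contains $y$, i.e.\ $\widetilde\iota(y)\in O$, and symmetrically. This already gives the first assertion of the lemma.

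For the bond-faithful case I will show that $\widetilde\iota$ itself is a topological embedding and transport this through the quotient maps. Injectivity of $\widetilde\iota$ is clear: it is the identity on the $1$-complex of $H$, it is injective on $\Omega_E(H)$ by Lemma~\ref{lem:subbond}, and it keeps the $1$-complex and the ends apart. For openness onto its image I claim that for a basic open $W=C'\cup\Omega_E(F',C')\cup S'$ of $\ETOP(H)$ one has $\widetilde\iota(W)=U\cap\operatorname{im}\widetilde\iota$, where $U=C\cup\Omega_E(F',C)\cup S$ is built from the same finite set $F'$ (now read inside $E(G)$) and the component $C$ of $G-F'$ containing $C'$. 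The inclusion $\subseteq$ is immediate; for $\supseteq$ the point is that no vertex $w\in V(H)\setminus V(C')$ lies in $C$ and no end $[s]_G$ with $[s]_H\notin\Omega_E(F',C')$ lies in $\Omega_E(F',C)$. Both follow from bond-faithfulness by the argument of Lemma~\ref{lem:subbond}: since $w$ is separated from $V(C')$ by the finite set $F'$ inside $H$, a minimum such cut is a finite bond of $H$, hence a finite bond of $G$ contained in $F'$, and so it separates $w$ from $V(C')$ already in $G-F'$; the statement about ends is literally the identity $\varphi[C^H(\varepsilon;F')]=C^G(\varphi(\varepsilon);F')\cap\operatorname{im}\varphi$ from that proof. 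The boundary half-edges match because the cut $\partial_H C'$ is contained in $F'$, and they are finite in number.

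It remains to push the embedding down to $\ETOP^\prime$. One first notes that every basic open $W$ of $\ETOP(H)$ is saturated for the identification: a vertex inseparable from one inside $C'$ cannot be cut off from it by $F'$, and an end edge-dominated by a vertex of $C'$ lies in $\Omega_E(F',C')$; likewise for $\ETOP(G)$. Hence $q_H(W)$ is open in $\ETOP^\prime(H)$, $q_G(U)$ is open in $\ETOP^\prime(G)$, and using saturation of $U$ one gets $\iota(q_H(W))=q_G(\widetilde\iota(W))=q_G(U\cap\operatorname{im}\widetilde\iota)=q_G(U)\cap\operatorname{im}\iota$, which is relatively open; since the $q_H(W)$ form a basis, $\iota$ is open onto its image. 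Injectivity of $\iota$ requires the converse of the step used for well-definedness: that $\widetilde\iota(x)\approx\widetilde\iota(y)$ in $\ETOP(G)$ forces $x\approx y$ in $\ETOP(H)$. This is where bond-faithfulness is invoked a second time, through the standard fact that in a connected graph a minimum edge cut between two vertices---or between a vertex and the set of tails of a ray---is a bond: a finite cut witnessing $\lambda_H(u,w)<\infty$, or witnessing that $u$ fails to edge-dominate $[s]_H$, provides a finite bond of $H$, hence of $G$, contradicting $\lambda_G(u,w)=\infty$ respectively that $u$ edge-dominates $[s]_G$; the two-ends case is handled by injectivity of $\varphi$, and edge-interior points form singleton classes on both sides.

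The main obstacle throughout is exactly the one already met in Lemma~\ref{lem:subbond}: a finite separator of $H$ must keep separating the vertices and ends of $H$ in the same way once it is viewed inside $G$, and this is guaranteed only by bond-faithfulness via the identification of minimal cuts with bonds. Everything else---the interval topology on edge interiors, the finitely many boundary vertices and half-edges, and the manipulation of the quotient maps---is routine bookkeeping. (As in all applications in this paper, $H$ is taken to be connected, which is what makes the ``minimum cut is a bond'' fact available.)
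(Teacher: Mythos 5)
Your proof is correct and takes exactly the route the paper intends: the paper states this lemma without proof, pointing only to the analogy with Lemma \ref{lem:subbond}, and your argument carries that analogy out on the unquotiented spaces and then descends through the identification, with the saturation of basic open sets and the preservation (and, under bond-faithfulness, reflection) of inseparable pairs supplying precisely the bookkeeping the paper leaves implicit. The one standing hypothesis worth making explicit is the connectedness of $H$, needed for the ``minimal cut is a bond'' step, which you correctly flag and which the paper also uses silently in the proof of Lemma \ref{lem:subbond}.
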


    This approach restricts us to graphs where no two vertices are infinitely edge-connected, for such case would make us collapse vertices.
    For a locally finite graph $G$ the space $\ETOP^\prime(G)$ is the same as the usual space $|G|$.

    Now we can define what it means to allow a path to pass through an end, as well as a new notion of connectivity.

    \begin{deff}
        A topological path in the graph $G$ is an injective continuous function $\rho:[0,1]\to\ETOP^\prime(G)$.

        Given an orientation $\overrightarrow{G}$, a topological path $\rho$ is oriented if, for every edge $x\overrightarrow{e}y$ contained in the image of $\rho$, $\rho^{-1}(x)<\rho^{-1}(y)$.
    \end{deff}

    Even though a topological path is a subspace of $\ETOP^\prime(G)$ homeomorphic to $[0,1]$, we may derive a path from weaker hypotheses, allowing for the continuous function $\rho:[0,1]\to\ETOP^\prime(G)$ to not necessarily be injective, as long as its self-intersection points, that is $\rho(s)=\rho(t)$, are not too dense on the interval.

        \begin{lem}\label{lem:pathh}
        For an oriented graph $\overrightarrow G$ such that every two vertices $u$ and $v$ are finitely edge-connected, the following are equivalent
        \begin{enumerate}
            \item There is an oriented topological path from $u$ to $v$;
            \item There is a continuous function $\Phi:[0,1]\to\ETOP^\prime(G)$ that respects edge orientation, such that $\Phi(0)=u$ and $\Phi(1)=v$;  if, for two distinct points $x,y\in[0,1]$, $\Phi(x)=\Phi(y)$, then their image is either an end or a vertex that edge-dominates an end.
        \end{enumerate}
    \end{lem}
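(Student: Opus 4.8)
The plan is to treat $(1)\Rightarrow(2)$ as essentially a triviality and to put all of the work into $(2)\Rightarrow(1)$, which is an orientation-sensitive refinement of the classical fact that in a Hausdorff space any two distinct points joined by a path are joined by an arc lying in the image of that path. For $(1)\Rightarrow(2)$: an oriented topological path $\rho$ from $u$ to $v$ is by definition a continuous injection $[0,1]\to\ETOP^\prime(G)$ with $\rho(0)=u$, $\rho(1)=v$ that respects the edge orientation, so $\Phi:=\rho$ already witnesses $(2)$ --- being injective it has no self-intersections whatsoever, and hence the conditional clause ``if $\Phi(x)=\Phi(y)$ for distinct $x,y$, then their common image is an end or a vertex edge-dominating an end'' is satisfied vacuously.

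For $(2)\Rightarrow(1)$ I would first record that $\ETOP^\prime(G)$ is Hausdorff for the graphs under consideration (no two vertices infinitely edge-connected) --- this is precisely what the identification defining $\ETOP^\prime$ achieves, and it can be cited from \cite{GEORGAKOPOULOS20111523}. Given $\Phi$ as in $(2)$, apply the classical ``path $\Rightarrow$ arc'' theorem in Hausdorff spaces (for instance the transfinite loop-cutting argument, as in Willard's \emph{General Topology}): since $\Phi(0)=u\neq v=\Phi(1)$, there is a continuous injection $\rho\colon[0,1]\to\ETOP^\prime(G)$ with image contained in the image of $\Phi$, $\rho(0)=u$ and $\rho(1)=v$, and moreover $\rho$ can be arranged so as to follow $\Phi$: i.e.\ $\rho$ is obtained from $\Phi$ by excising (transfinitely many) sub-intervals on which $\Phi$ runs a ``loop'' $\Phi(s)=\Phi(t)$. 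Because $\ETOP^\prime(G)$ is Hausdorff, the continuous injection $\rho$ from the compact interval $[0,1]$ is automatically a topological embedding, hence $\rho$ is a topological path; it remains only to verify that it is oriented.

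For the orientation I would argue as follows. Since $\Phi$ respects the edge orientation, it never traverses any part of the interior of an edge in the head-to-tail direction, so at any splice point $\Phi(s)=\Phi(t)$ lying in the interior of an edge $e=x\overrightarrow{e}y$, both the incoming and the outgoing portions of $\Phi$ run monotonically from the $x$-side toward the $y$-side, and the same holds after the splice. Consequently one shows: whenever $\rho$ meets the interior of an edge $e$, on each maximal sub-interval $J$ of $[0,1]$ with $\rho(J)$ inside $e$, injectivity forces $\rho|_{\overline J}$ to traverse all of $e$ between its two endpoints, and on $J$ the map $\rho$ is a genuine (strictly increasing) reparametrisation of a portion of $\Phi$ contained in the interior of $e$, whence that traversal is tail-to-head. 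Therefore for every edge $x\overrightarrow{e}y$ in the image of $\rho$ we get $\rho^{-1}(x)<\rho^{-1}(y)$, so $\rho$ is an oriented topological path from $u$ to $v$, completing $(2)\Rightarrow(1)$.

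I expect the main obstacle to be the rigour of the extraction-with-bookkeeping step: one must check carefully that the transfinite loop-cutting (equivalently, a suitable monotone quotient of $[0,1]$) terminates in a genuinely injective $\rho$ whose image still lies in the image of $\Phi$ and which still ``follows $\Phi$'s order'', and that the closed condition ``no half-edge is traversed from head to tail'' survives the limit stages; one must also justify (or cite) the Hausdorffness of $\ETOP^\prime(G)$ under the standing hypothesis, since the whole argument rests on it. By contrast the self-intersection clause of $(2)$ turns out not to be load-bearing for $(2)\Rightarrow(1)$ --- it is exactly the statement produced by $(1)\Rightarrow(2)$ --- and is retained in the statement mainly because that is the form in which hypothesis $(2)$ will be verified when the lemma is applied in Section~\ref{sec:Const}.
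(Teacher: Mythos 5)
Your proposal is correct and follows essentially the same route as the paper: the paper's proof of $(2)\Rightarrow(1)$ is exactly the transfinite loop-cutting you describe (a nested transfinite family of intervals $[a_\alpha,b_\alpha]$ with $\Phi(a_\alpha)=\Phi(b_\alpha)$, whose half-open parts are excised so that the remaining linearly ordered set is again order-isomorphic to $[0,1]$ and $\Phi$ restricted to it is injective), while $(1)\Rightarrow(2)$ is treated as vacuous. Your additional remarks --- that Hausdorffness of $\ETOP^\prime(G)$ under the finite-edge-connectivity hypothesis is what underwrites the argument, that the tail-to-head traversal of each edge survives the splicing, and that the clause restricting self-intersections to ends and dominating vertices is not actually used in the implication $(2)\Rightarrow(1)$ --- are all correct and supply details the paper's proof leaves implicit.
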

    \begin{proof}
        The guiding thread is that we are going to consider the self-intersections of $\Phi$ and skip the cycles created.
        We will construct a family of pairwise disjoint intervals $\mathcal I$ on $[0,1]$ so that, for every $[a,b]\in\mathcal I$, $\Phi(a)=\Phi(b)$ and if $s<t\in[0,1]$ are such that $\Phi(s)=\Phi(t)$ then there is an $I\in\mathcal I$ that either $s\in I$ or $t\in I$. 
        That way, when we contract each interval of $\mathcal I$ to a single point, by taking $[0,1]\setminus\bigcup\{]a,b]:[a,b]\in\mathcal I\}$, the linear order is isomorphic to $[0,1]$ and $\Phi$ restricted to it is a topological path.
        
        Consider $\Upsilon$ the set of all points $y$ of $\ETOP^\prime(G)$ with more than one element $t\in[0,1]$ such that $\Phi(t)=y$, and a well-ordering $(\Upsilon,<)$, which we will use to guide a transfinite induction that guarantees that every such $y\in\Upsilon$ are tackled.
        
        By induction, we will take an increasing sequence $\langle y_\alpha:\alpha<\lambda\rangle$ on $\Upsilon$ and associated to each $y_\alpha$ an interval $[a_\alpha,b_\alpha]\subseteq[0,1]$ with $\Phi(a_\alpha)=\Phi(b_\alpha)=y_\alpha$.

        Start by taking $y_0=\min\Upsilon$, $a_0=\min\Phi^{-1}[y_0]$ and $b_0=\max\Phi^{-1}[y_0]$, whose existence is guaranteed by the continuity of $\Phi$.
        Consider that we have taken $\langle y_\beta:\beta<\alpha\rangle$ and the intervals associated.
        For the set 
        \[S_\alpha=\left\{y\in\Upsilon:\exists s<t\in[0,1]\setminus\bigcup_{\beta<\alpha}[a_\beta,b_\beta]; \Phi(s)=\Phi(t)=y\right\},\]
        if $S_\alpha=\emptyset$, take $\alpha=\lambda$ and stop the construction.
        Otherwise take $y_\alpha=\min S_\alpha$, and the interval is taken with $a_\alpha<b_\alpha$ in $[0,1]\setminus\bigcup_{\beta<\alpha}[a_\beta,b_\beta]$ as
        \begin{itemize}
            \item $a_\alpha$ is the least element such that $\Phi(a_\alpha)=y_\alpha$;
            \item $b_\alpha$ is the greatest element such that $\Phi(b_\alpha)=y_\alpha$.
        \end{itemize}

        Let us remark that the family of intervals $\{[a_\alpha,b_\alpha]:\alpha<\lambda\}$ is nested and every chain has an upper limit.
        Thus, we may take $\mathcal I$ as the family of maximal elements of $\{[a_\alpha,b_\alpha]:\alpha<\lambda\}$, and it will be as stated in the first paragraph of the present proof, that is, it is pairwise disjoint and if $s,t\in[0,1]$ are such that $\Phi(s)=\Phi(t)$ then there is $I\in\mathcal I$ with either $s\in I$ or $t\in I$.
    \end{proof}

    Together with this notion of paths between vertices, we can define new notions of connectivity, that is, how much is needed to not exist such a path between two vertices.
        
    \begin{deff}    
        The topological edge-connectivity between two vertices $x$ and $y$, $\lambda^T(x,y)$, is the minimal cardinality of a set of edges such that every topological path between $x$ and $y$ passes through an element of the set.

        The topological arc-connectivity between two vertices $x$ and $y$, $\overrightarrow{\lambda}^T(x,y)$, is the minimal cardinality of a set of edges such that every oriented topological path from $x$ to $y$ passes through an element of the set.
    \end{deff}

    Let us remark that every path induces unequivocally a topological path, as well as every oriented path induces an oriented topological path.

    \begin{prop}\label{prop:kjn}
        If $\lambda(x,y)$ is finite, then $\lambda(x,y)=\lambda^T(x,y)$.
    \end{prop}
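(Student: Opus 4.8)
The plan is to establish the two inequalities $\lambda(x,y)\le\lambda^T(x,y)$ and $\lambda^T(x,y)\le\lambda(x,y)$ separately. The first holds with no finiteness hypothesis: since every $x$–$y$ path induces a topological $x$–$y$ path with the same underlying edges (as remarked just before the statement), any finite edge set that meets every topological $x$–$y$ path also meets every ordinary $x$–$y$ path, hence is an ordinary $x$–$y$ edge cut; applying this to a set of edges of minimum size realizing $\lambda^T(x,y)$ gives $\lambda^T(x,y)\ge\lambda(x,y)$ (if $\lambda^T(x,y)=\infty$ there is nothing to prove). So the content is the reverse inequality under the assumption $\lambda(x,y)<\infty$.

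For that, I would fix a minimum $x$–$y$ edge cut $F$, so $|F|=\lambda(x,y)$ and $x,y$ lie in different components of $G-F$; write $C_x$ for the component of $x$. Let $\mathring F=\bigcup_{e\in F}\mathring e\subseteq\ETOP^\prime(G)$ be the union of the open edges of $F$. For each component $C$ of $G-F$ let $U_C=C\cup\Omega_E(F,C)\cup S_C$ be the corresponding basic open set of the edge-end topology, and set $A_C=U_C\setminus\mathring F$. The goal is to show $\ETOP^\prime(G)\setminus\mathring F$ is partitioned into the relatively open sets $A_C$. For this I would verify: (i) for distinct components $C\ne C'$ the sets $U_C$ and $U_{C'}$ are disjoint — their vertex/edge parts because components of $G-F$ are disjoint, their edge-end parts because every ray of $G$ has a tail in exactly one component of $G-F$ and edge-equivalent rays share that component, and their half-edge parts because each edge of $F$ joins two distinct components; (ii) $\bigcup_C U_C\cup\mathring F=\ETOP^\prime(G)$, since every vertex lies in some $C$, every edge interior lies in $\mathring F$ or in some $C$, and every edge-end lies in a unique $\Omega_E(F,C)$; and (iii) each $U_C$ is saturated for the identification defining $\ETOP^\prime$, because a vertex that edge-dominates an end, and any two infinitely edge-connected vertices, remain in the same component of $G-F$ (this uses $F$ finite), so the $A_C$ genuinely descend to disjoint open subsets of $\ETOP^\prime(G)\setminus\mathring F$ covering it. Grouping the components other than $C_x$, this yields a partition $\ETOP^\prime(G)\setminus\mathring F=A_{C_x}\sqcup\big(\bigcup_{C\ne C_x}A_C\big)$ into two open sets, with $x$ in the first and $y$ in the second.

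Now suppose for contradiction that some topological path $\rho\colon[0,1]\to\ETOP^\prime(G)$ from $x$ to $y$ has image disjoint from $\mathring F$. Then $\rho^{-1}(A_{C_x})$ and $\rho^{-1}\big(\bigcup_{C\ne C_x}A_C\big)$ are disjoint open subsets of $[0,1]$ covering it, both nonempty (one contains $0$, the other $1$), contradicting connectedness of $[0,1]$. Hence every topological $x$–$y$ path meets $\mathring F$, i.e. passes through the interior of an edge of $F$, so $F$ is a topological $x$–$y$ separator and $\lambda^T(x,y)\le|F|=\lambda(x,y)$. Combined with the first paragraph, this gives $\lambda(x,y)=\lambda^T(x,y)$.

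The bookkeeping in (i) and (ii) is routine once the definition of $\ETOP^\prime(G)$ is unwound; the main obstacle I anticipate is (iii) — checking that collapsing inseparable points does not glue the two sides of the cut together, i.e. that every inseparable pair (a dominating-vertex/end pair, or a pair of infinitely edge-connected vertices) lies entirely on one side of $F$. This is exactly where finiteness of $\lambda(x,y)$ is used, since infinite edge-connectivity survives the deletion of any finite edge set. A minor point worth recording is that an injective topological path with vertex endpoints that meets the interior of an edge $e$ in fact traverses all of $e$, so the two possible readings of ``passes through $e$'' coincide and the argument applies under either.
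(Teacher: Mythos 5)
Your proposal is correct and follows essentially the same route as the paper: the inequality $\lambda(x,y)\le\lambda^T(x,y)$ from the fact that graph-theoretic paths induce topological paths, and the reverse inequality by showing that a finite minimum cut $F$ still disconnects $x$ from $y$ in $\ETOP^\prime(G)$ once the interiors of its edges are removed. The paper compresses this second step into the single sentence ``since $\rho$ is continuous, $x$ and $y$ are in the same connected component of $G-F$''; your verification of the clopen partition indexed by the components of $G-F$, including the saturation check (iii), is exactly the content behind that sentence.
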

    \ifthenelse{\equal{\proofshow}{1}}{
    \begin{proof}
        That $\lambda(x,y)\leq\lambda^T(x,y)$ is true comes as a direct result of the remark above.

        Consider a finite $F\subseteq E(G)$ and a topological path $\rho$ between $x$ and $y$ that does not pass through any element of $F$.
        Since $\rho$ is continuous, $x$ and $y$ are in the same connected component of $G-F$.
    \end{proof}
    }{}
    In case $\lambda(x,y)$ is infinite, we identify $x=y$ in the space $\ETOP^\prime(G)$.

    That said, the oriented version of Proposition \ref{prop:kjn} is not true:

    \begin{exa}
        There is a locally finite oriented graph not strongly connected such that between any two vertices there is an oriented topological path.
    \end{exa}
    \ifthenelse{\equal{\proofshow}{1}}{
    \begin{proof}
    Consider the one way ladder where one side is oriented outwards, the other inwards and the connections between the sides point to the outwards side, as seen on the Figure \ref{fig:exem}.
    \begin{figure}[ht]
    \centering
\input{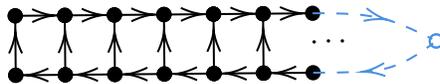}
    \caption{Orientation of the ladder}
    \label{fig:exem}
\end{figure}
\end{proof}
    }{}

    With these notions we can define our topological alternative to the Conjecture \ref{wellb,conj}, as the following.
   \begin{thm}\label{thm:topolcon}
        If $G$ is a graph such that every two vertices are finitely connected, then there is an orientation $\overrightarrow{G}$ such that for every two distinct vertices $x,y$ the following holds:
        \[\overrightarrow{\lambda}^T(x,y)\geq\left\lfloor\frac{1}{2}\lambda(x,y)\right\rfloor.\]
    \end{thm}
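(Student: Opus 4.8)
The plan is to prove Theorem~\ref{thm:topolcon} by the same two-step reduction already used for the Strong Conjecture in Section~\ref{section:strong}: first reduce from an arbitrary finitely separated graph to a countable one, then reduce from a countable finitely separated graph to a locally finite one, and finally invoke the locally finite case, which is established in Section~\ref{sec:Const}.

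For the first reduction I would argue as in the reduction of the Strong Conjecture to the countable case in \cite{pitz2024} (cf.\ the remark following the theorem of \cite{pitz2024} quoted in Section~\ref{section:strong}). The class of finitely separated graphs is closed under bond-faithful subgraphs, since $\lambda_H(x,y)\le\lambda_G(x,y)$ whenever $H\subseteq G$; and for a bond-faithful $H\subseteq G$ and $x,y\in V(H)$ one has both $\lambda_H(x,y)=\lambda_G(x,y)$ (the usual consequence of bond-faithfulness) and $\overrightarrow{\lambda}^{T}_{G}(x,y)\ge\overrightarrow{\lambda}^{T}_{H}(x,y)$ — the latter because, by Lemma~\ref{lem:subbondtop}, an oriented topological path of $\ETOP^\prime(H)$ pushes forward through $\iota$ to one of $\ETOP^\prime(G)$ using the same edges. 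Feeding these facts into Laviolette's bond-faithful decomposition into countable connected pieces, and orienting the pieces coherently exactly as Pitz and Stegemann do, yields an orientation of $G$ witnessing Theorem~\ref{thm:topolcon} out of such orientations on the pieces. From now on $G$ may be assumed countable and finitely separated.

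For the second reduction I would reuse verbatim the graph $\overline{G}$ built in Section~\ref{section:strong}, in which every vertex of infinite degree is expanded into an expanding-ray $R_v$; then $\overline{G}$ is locally finite, hence finitely separated. By the locally finite case (Section~\ref{sec:Const}) there is an orientation $\overrightarrow{\overline{G}}$ with $\overrightarrow{\lambda}^{T}_{\overline{G}}(s,t)\ge\lfloor\tfrac12\lambda_{\overline{G}}(s,t)\rfloor$ for all vertices $s,t$, and since $E(G)\subseteq E(\overline{G})$ it induces an orientation $\overrightarrow{G}$. Fix distinct $u,v\in V(G)$ and, by the claim in the proof of Theorem~\ref{thm:wellb} (on the existence of $u'\in\pi^{-1}[u]$ and $v'\in\pi^{-1}[v]$ with $\lambda_{\overline{G}}(u',v')=\lambda_{G}(u,v)$), choose such $u',v'$. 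It then suffices to prove
\[\overrightarrow{\lambda}^{T}_{G}(u,v)\ \ge\ \overrightarrow{\lambda}^{T}_{\overline{G}}(u',v'),\]
for then $\overrightarrow{\lambda}^{T}_{G}(u,v)\ge\lfloor\tfrac12\lambda_{\overline{G}}(u',v')\rfloor=\lfloor\tfrac12\lambda_{G}(u,v)\rfloor$.

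To prove this inequality, given a finite $F\subseteq E(G)$ with $|F|<\overrightarrow{\lambda}^{T}_{\overline{G}}(u',v')$, I would take an oriented topological path $\rho$ in $\ETOP^\prime(\overline{G})$ from $u'$ to $v'$ avoiding $F$, and read off the sequence of edges of $G$ (that is, edges of $\overline{G}$ lying in no $R_w$) that $\rho$ traverses, in order: this is a directed trail $W$ in $G$ from $u$ to $v$ avoiding $F$, since every maximal stretch of $\rho$ between two such edges lies inside a single $R_w$ and collapses to the vertex $w$, the same collapsing turning each $\overline{R}$-detour near a $G$-end back into the corresponding ray of $G$. Parametrising $W$ gives a continuous, orientation-respecting $\Phi:[0,1]\to\ETOP^\prime(G)$ with $\Phi(0)=u$, $\Phi(1)=v$, avoiding $F$, whose only self-intersections lie at ends of $G$ and at the expanded vertices $w\in V_\ast$; running the cycle-skipping transfinite induction from the proof of Lemma~\ref{lem:pathh} on $\Phi$ then extracts an oriented topological path from $u$ to $v$ avoiding $F$, so $\overrightarrow{\lambda}^{T}_{G}(u,v)>|F|$ as needed. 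The delicate point — the step I expect to be the main obstacle — is precisely this last one: one must check that collapsing the $R_w$-stretches (and the $\overline{R}$-detours) is genuinely continuous into $\ETOP^\prime(G)$, i.e.\ compatible with the basic open sets at the expanded vertices and at the $G$-ends, and that the construction of Lemma~\ref{lem:pathh} still yields an \emph{injective} path when self-intersections are permitted at non-end-dominating vertices — which they can be, for instance when $\rho$ re-enters some $R_w$ after having left it, so that $\Phi$ need not satisfy the self-intersection hypothesis in item~(2) of that lemma. Everything else (the construction of $\overline{G}$, the identity $\lambda_{\overline{G}}(u',v')=\lambda_{G}(u,v)$, and the passage from general to countable) is already available from Section~\ref{section:strong} and from \cite{pitz2024}.
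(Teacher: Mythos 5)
Your proposal follows the paper's proof essentially step for step: the countable-to-locally-finite reduction via $\overline G$ and the continuous projection $\pi:|\overline G|\to\ETOP^\prime(G)$, the cycle-skipping argument of Lemma \ref{lem:pathh} applied first to the maximal intervals mapped into a single $R_w$ (the delicate point you flag is resolved in the paper exactly as you suggest), and the general-to-countable reduction via the bond-faithful decomposition and the embeddings of Lemma \ref{lem:subbondtop}. The only ingredient you leave implicit is Lemma \ref{lem:pitz} (the amalgamation of Lemmas 6.4 and 6.5 of \cite{pitz2024}), which supplies the path $x_0,\dots,x_{n+1}$ through the pieces with $\lambda_{G_{i_m}}(x_m,x_{m+1})\geq\min\{\lambda_G(x,y),\aleph_0\}$ and is what lets each piecewise oriented topological path avoid the given edge set $F$ before the final concatenation-and-skipping step.
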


    Our way to prove this will be similar to the techniques applied up to now: we will present the steps from the locally finite case to the countably infinite case and then to the general case, both times using the respective techniques present in Section \ref{section:strong}.
    In the present section we will omit the proof of the locally finite case stated below, leaving it on Section \ref{sec:Const}, due to the specific tools and additional definitions we will use in it.

\begin{prop}\label{lem:locfinfinal}
    If $G$ is a locally finite graph, then there is an orientation $\overrightarrow G$ such that for every two distinct vertices $x,y$ the following holds:
    \[\overrightarrow{\lambda}^T(x,y)\geq\left\lfloor\frac{1}{2}{\lambda(x,y)}\right\rfloor.\]
\end{prop}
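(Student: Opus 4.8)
The plan is to deduce Proposition~\ref{lem:locfinfinal} from the finite Nash--Williams theorem \cite{nashwill60} by an exhaustion argument, exploiting the extra freedom that a topological path may run out to an end of $G$ and back in order to absorb the parts of the graph lying ``beyond'' a finite subgraph. First note that a connected locally finite graph is countable, so $\{0,1\}^{E(G)}$ --- the space of orientations of $G$ --- is compact metrizable, and $\lambda_G(x,y)\le\deg_G(x)<\infty$ for all vertices, so the hypothesis of Theorem~\ref{thm:topolcon} is automatic here and $\ETOP^\prime(G)=|G|$. Fix an exhaustion $G_1\subseteq G_2\subseteq\cdots$ of $G$ by finite connected subgraphs with $\bigcup_nV(G_n)=V(G)$. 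Since $G$ is locally finite, $G-V(G_n)$ has only finitely many components, of which finitely many are infinite; let $G_n^\ast$ be the finite graph obtained from $G$ by contracting each component of $G-V(G_n)$ to a single vertex and keeping the boundary edges, so that each infinite component contributes one \emph{peripheral} vertex representing the ends of $G$ it contains. Contracting connected subgraphs never decreases edge-connectivity between two surviving vertices (a cut of $G_n^\ast$ pulls back to a cut of $G$ of the same size), so $\lambda_{G_n^\ast}(x,y)\ge\lambda_G(x,y)$ whenever $x,y\in V(G_n)$.

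By Nash--Williams \cite{nashwill60}, each finite graph $G_n^\ast$ has a well-balanced orientation. The heart of the construction --- and the reason it is postponed to Section~\ref{sec:Const} --- is to turn this sequence of \emph{mutually incompatible} finite orientations into a single orientation $\overrightarrow G$ of $G$. The intended route is a recursion along the exhaustion in which, at stage $n$, one commits the orientation of all edges incident with $V(G_n)$ in such a way that (i)~it is compatible with all earlier commitments, (ii)~it extends, after contraction, to a well-balanced orientation of $G_n^\ast$, and (iii)~inside every exterior piece there is a directed subgraph through which \emph{every} boundary vertex both reaches, and is reached from, one common end of that piece. A subsequential limit in $\{0,1\}^{E(G)}$ --- or simply the union of the committed partial orientations, if the recursion is literally consistent --- yields the desired $\overrightarrow G$.

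It then remains to verify the inequality. Fix distinct vertices $x,y$, put $k=\bigl\lfloor\tfrac12\lambda_G(x,y)\bigr\rfloor$, and let $F\subseteq E(G)$ be finite with $|F|<k$. Choose $n$ so large that $x,y\in V(G_n)$ and every edge of $F$ has both endpoints in $V(G_n)$, so that $F\subseteq E(G_n^\ast)$; by the above, the well-balanced orientation of $G_n^\ast$ produced at stage $n$ has $\overrightarrow\lambda_{G_n^\ast}(x,y)\ge\bigl\lfloor\tfrac12\lambda_{G_n^\ast}(x,y)\bigr\rfloor\ge k$, hence it contains a directed $x$--$y$ path $P^\ast$ avoiding $F$. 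Lift $P^\ast$ to $\overrightarrow G$: the segments of $P^\ast$ contained in $V(G_n)$ lift verbatim, while each passage of $P^\ast$ through a peripheral vertex is replaced by a \emph{detour through an end} --- enter the corresponding exterior piece along the edge prescribed by $P^\ast$, follow a directed ray to the end $\omega$ guaranteed by invariant~(iii), pass through $\omega$ inside $|G|$, and return along a disjoint directed ray out of $\omega$ to the exit edge prescribed by $P^\ast$. The resulting concatenation is a continuous orientation-respecting map $[0,1]\to\ETOP^\prime(G)$ whose only self-intersections occur at ends, so by Lemma~\ref{lem:pathh} it is an oriented topological path; it runs from $x$ to $y$, and it avoids $F$ because $P^\ast$ does and the detours use only edges of exterior pieces, which for $n$ large are disjoint from $F$. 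Since $F$ was an arbitrary edge set of size $<k$, we conclude $\overrightarrow\lambda^T(x,y)\ge k$, as required.

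The step I expect to be the main obstacle is the one in the second paragraph: maintaining (i)--(iii) simultaneously along the recursion, i.e.\ gluing the finite well-balanced orientations coherently \emph{and} orienting each exterior piece so that it provides genuine directed access to and from one of its ends, so that every routing through a contracted vertex can be honestly traded for a detour through an end. Everything else is bookkeeping; this is precisely where the additional definitions and tools of Section~\ref{sec:Const} are needed.
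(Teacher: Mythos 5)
There is a genuine gap, and you have flagged it yourself: the entire construction of the orientation $\overrightarrow G$ --- the recursion maintaining (i)--(iii) --- is only described as an ``intended route'' and never carried out. Each of the three invariants is problematic. For (i)--(ii), the well-balanced orientations of the contracted graphs $G_n^\ast$ supplied by Nash--Williams for different $n$ need not agree on shared edges, and neither ``the union of the committed partial orientations'' nor ``a subsequential limit in $\{0,1\}^{E(G)}$'' resolves this: a pointwise limit of orientations does not in general restrict to a well-balanced orientation of any fixed $G_n^\ast$, since the restriction to $E(G_n^\ast)$ of a well-balanced orientation of a later $G_m^\ast$ need not be well-balanced for $G_n^\ast$. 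Invariant (iii) --- that every exterior piece be oriented so that each boundary vertex both reaches and is reached from a common end --- is an additional strong demand of essentially the same nature as the statement being proved, and you give no argument that it can be achieved simultaneously with (ii). Since the inequality in the last paragraph depends entirely on these invariants (both to get a directed $x$--$y$ path in $G_n^\ast$ under the \emph{final} orientation and to trade each peripheral vertex for an honest detour through an end), the proof does not go through as written.

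For comparison, the paper sidesteps the compatibility problem entirely by a different mechanism. It takes well-balanced orientations of the finite \emph{induced} subgraphs $G_n=G[V_n]$ (no contraction, no peripheral vertices), and defines $\overrightarrow G$ as the $F$-limit along a non-principal ultrafilter; the ultrafilter guarantees that any finite set of orientation decisions is realized coherently on an $F$-large set of indices. Then, for fixed $x,y$ and a small edge set $X$, it uses $\lambda_{G_n}(x,y)=\lambda_G(x,y)$ for large $n$ to get directed $x$--$y$ paths $P_n$ in $\overrightarrow{G_n}$ avoiding $X$, and takes the $F$-limit of the paths themselves: the limit is automatically directed in $\overrightarrow G$, and the ultrafilter also induces a linear order on its vertices and edges whose Dedekind completion is $[0,1]$, with the gaps filled by ends; Lemma \ref{lem:pathh} then removes the self-intersections at ends. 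So the ends enter as limits of the finite paths rather than as contracted vertices that must be detoured around, and no coherence between the finite orientations is ever needed. If you want to salvage your contraction-based route, the missing content is precisely a proof that the $G_n^\ast$ can be oriented coherently with (ii) and (iii); that is not bookkeeping but the heart of the matter.
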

    
    The rest of this section is dedicated to proving Theorem \ref{thm:topolcon}.
    The construction $\overline G$ presented in Theorem \ref{thm:wellb} will allows us to generalize Proposition \ref{lem:locfinfinal} to the countably infinite case.
    The last step is to see that the bond-faithful decomposition preserves topological properties, which conclude the general case.
    
    Let $G$ be a countably infinite graph, such that every two vertices are finitely edge-connected.
    Consider the graph $\overline{G}$ as defined in the Section \ref{section:strong}.
    Notice that no two rays $R_u$ and $R_v$ in $\overline{G}$ are edge-equivalent, since them being equivalent is the same as the vertices being infinitely edge-connected.

    We will verify that the orientation in $G$ induced by the one given in $\overline G$ in the last section already suffices.
    In order to achieve it, we only have to find a way to induce topological paths from $|\overline G|$ to $\ETOP^\prime(G)$ while preserving the order.

    Thus, we define the projection function $\pi:|\overline G|\to\ETOP^\prime(G)$ as
    \[\pi(x)=\begin{cases}
        x,\ \text{ if }x\text{ is a locally finite vertex or in an original  edge;}\\
        v,\text{ if }x\text{ is in an }R_v\text{ for infinite degree vertex }v\text{;}\\
        [R],\text{ if }x=[\overline R]\text{;}\\
        v,\text{ if }x=[R_v].
    \end{cases}\]
    \begin{lem}
        The function $\pi:|\overline G|\to\ETOP^\prime(G)$ is well-defined and continuous.
    \end{lem}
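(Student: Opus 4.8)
The plan is to establish the two assertions separately, each by a short case analysis on the type of point, and each powered by the same device used throughout Section \ref{section:strong} (and in Claim \ref{2kcon}): a path, ray, or walk in $\overline G$ projects to a walk, ray, or walk in $G$ once one skips the edges lying inside the expanding-rays $R_v$.

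\emph{Well-definedness.} The four clauses defining $\pi$ are exhaustive: a point of the $1$-complex $\overline G$ lies in an original edge, is a locally finite vertex of $G$, or lies inside some $R_v$, and these are mutually exclusive; and, since $\overline G$ is locally finite, every end of $\overline G$ is an edge-end, so by the analogue for $\overline G$ of Claim \ref{claim:edge} it is either $[\overline R]$ for a ray $R$ of $G$ or $[R_v]$ for some $v\in V_\ast$. For consistency we check: if $[R_v]=[R_w]$ then $v=w$, since $R_v\not\sim_E R_w$ for $v\neq w$ (as already remarked); if $[\overline R]=[\overline Q]$ then $R\sim_E Q$, because a finite $F\subseteq E(G)$ separating tails of $R$ and $Q$ lifts to a finite $\overline F\subseteq E(\overline G)$ of the same size separating tails of $\overline R$ and $\overline Q$ (a connecting path in $\overline G-\overline F$ would project to a connecting walk in $G-F$), so $\pi([\overline R])=[R]$ is single-valued; and if an end of $\overline G$ equals both $[\overline R]$ and $[R_v]$, the same projection shows $v$ edge-dominates $[R]$ in $G$, whence $v$ and $[R]$ are identified in $\ETOP^\prime(G)$ and the two clauses agree. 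Thus $\pi$ is well defined into $\ETOP^\prime(G)$.

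\emph{Continuity at points of $\overline G$.} On the interior of an edge $\pi$ is either the identity onto an edge of $G$ or constant, hence continuous there. If $x$ is a locally finite vertex of $G$ or a vertex $(v,i)$ of some $R_v$, put $p=\pi(x)\in V(G)$ and let $U=C\cup\Omega_E(F,C)\cup S$ be a basic open set of $\ETOP^\prime(G)$ with $p\in C$ and $F\subseteq E(G)$ finite; every edge of $G$ at $p$ lies entirely in $C$ (if not in $F$) or contributes its half-edge at $p$ to $S$, so $U$ contains a full (possibly truncated) star around $p$. A small star neighbourhood $V$ of $x$ in $\overline G$ maps under $\pi$ into this star: new edges at $x$ (those of $R_v$, if any) go to $v=p\in C$, and the unique original edge at $x$ goes to the matching edge at $p$, whose half-edge at $p$ lies in $C\cup S\subseteq U$. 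Hence $\pi(V)\subseteq U$.

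\emph{Continuity at ends of $\overline G$.} Let $x$ be an end of $\overline G$ and $U=C\cup\Omega_E(F,C)\cup S$ a basic open set of $\ETOP^\prime(G)$ with $\pi(x)\in U$, where necessarily $C$ contains $\pi(x)$ if it is a vertex, resp.\ a tail of the ray representing $\pi(x)$ if it is an end. Lift $F$ to $\overline F\subseteq E(\overline G)$ by fixing original edges and sending $e^w_j\in F$ (with $w\in V_\ast$) to the corresponding edge at $(w,j)$; then $|\overline F|=|F|$, so $\overline F$ meets each $R_v$ only finitely, and a deep tail of any ray of $\overline G$ representing $x$ avoids $\overline F$. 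Let $D$ be the component of $\overline G-\overline F$ containing such a tail and set $V=D\cup\Omega_E(\overline F,D)\cup S'$, so $x\in V$. The key point — and the only delicate bookkeeping — is that $\overline F$ does not disconnect the highly edge-connected $R_v$'s, so collapsing each $R_v$ to $v$ does not merge components: projecting walks in $\overline G-\overline F$ to $G-F$ shows every vertex of $D$, hence also every endpoint of every original edge of $D$, lies in $C$, and every new edge of $D$ lies in some $R_v$ with $v\in C$, so $\pi[D]\subseteq C$; the same projection gives $\pi[\Omega_E(\overline F,D)]\subseteq C\cup\Omega_E(F,C)$ and $\pi[S']\subseteq S$ (a boundary half-edge of $D$ sits on an original edge of $F$ at a vertex of $D$, and its $\pi$-image is the matching boundary half-edge of $C$). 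Therefore $\pi(V)\subseteq U$, so $\pi$ is continuous at $x$, and the lemma follows. I expect this last component-tracking argument to be the only real obstacle; conceptually it rests entirely on the fact that deleting the finite lift $\overline F$ leaves every expanding-ray $R_v$ essentially intact.
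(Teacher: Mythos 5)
Your proposal is correct and follows essentially the same route as the paper's (much terser) proof: classify the ends of $\overline G$ as $[R_v]$ or $[\overline R]$, match equivalence and domination across $\pi$ for well-definedness, and verify continuity via the component-after-deleting-a-finite-edge-set description of the topology by projecting walks of $\overline G-\overline F$ to walks of $G-F$. Your version simply supplies the case analysis and component-tracking that the paper leaves implicit.
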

    \ifthenelse{\equal{\proofshow}{1}}{
    \begin{proof}
        As is the case in the Section 2, every ray in $\overline G$ intersects infinitely many vertices of, either an $R_v$, or a ray $\overline S$.
        Moreover, $\overline R$ and $\overline S$ are equivalent, if, and only if, $R$ and $S$ are edge-equivalent, as $\overline R$ is equivalent to a $R_v$ if, and only if, $v$ edge-dominates $R$.
        It follows from the definition of the topology as the connected component after deleting finite edges that $\pi$ is continuous.
    \end{proof}
    }{}
    Considering the continuous function $\Phi=\pi\circ\rho:[0,1]\to \ETOP^\prime(G)$, it may have self-intersections in the ends and dominating vertices and it is unmoving while $\rho$ is contained in a $R_v$.
    In order to apply the Lemma \ref{lem:pathh} it suffices to avoid the last case, and for that, a similar argument to the one used on Lemma \ref{lem:pathh} for $\Phi$ and the family of maximal intervals $I\subseteq [0,1]$ such that there is an vertex where $\rho[I]\subseteq R_v$ gives us a function $\Phi_\ast$ that satisfies the hypothesis.   
    
    \begin{lem}\label{lem:counttop}
        If $G$ is as stated above, then for every pair of vertices $x,y$ and with the defined orientation, the following holds:
        \[\overrightarrow{\lambda}^T(x,y)\geq\left\lfloor\tfrac{1}{2}\lambda(x,y)\right\rfloor.\]
    \end{lem}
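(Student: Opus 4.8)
The plan is to combine three ingredients already available: Proposition \ref{lem:locfinfinal} applied to the locally finite graph $\overline G$, the Claim from the proof of Theorem \ref{thm:wellb} (which realizes $\lambda_G(x,y)$ between suitable lifts in $\overline G$), and the projection $\pi\colon|\overline G|\to\ETOP^\prime(G)$ together with Lemma \ref{lem:pathh} (to push oriented topological paths down from $\overline G$ to $G$). Throughout, $\overrightarrow G$ denotes the orientation induced, as in Section \ref{section:strong}, by an orientation $\overrightarrow{\overline G}$ of $\overline G$ supplied by Proposition \ref{lem:locfinfinal}; since $\overline G$ is locally finite, $\ETOP^\prime(\overline G)=|\overline G|$, and $\overrightarrow{\lambda}^T_{\overline G}(u',v')\ge\lfloor\tfrac12\lambda_{\overline G}(u',v')\rfloor$ for every pair of distinct vertices $u',v'$ of $\overline G$.

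Fix distinct vertices $x,y$ of $G$. By the Claim in the proof of Theorem \ref{thm:wellb}, choose $x'\in\pi^{-1}[x]$ and $y'\in\pi^{-1}[y]$ with $\lambda_{\overline G}(x',y')=\lambda_G(x,y)$; then $x'\ne y'$, and since $\lambda_G(x,y)$ is finite by hypothesis, $x'$ and $y'$ are not identified in $\ETOP^\prime(\overline G)$. The heart of the argument is the inequality
\[
\overrightarrow{\lambda}^T_G(x,y)\ \ge\ \overrightarrow{\lambda}^T_{\overline G}(x',y').
\]
To prove it, let $F\subseteq E(G)$ be any set of edges meeting every oriented topological path from $x$ to $y$ in $\ETOP^\prime(G)$; viewing $F\subseteq E(\overline G)$ (every edge of $G$ is an edge of $\overline G$), I claim $F$ meets every oriented topological path from $x'$ to $y'$ in $|\overline G|$. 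Suppose not, and let $\rho\colon[0,1]\to|\overline G|$ be an oriented topological path from $x'$ to $y'$ whose image avoids $F$. Then $\Phi=\pi\circ\rho$ is continuous and orientation-respecting with $\Phi(0)=x$, $\Phi(1)=y$, and its image uses only original edges of $G$ that lie on $\rho$, hence avoids $F$. Its failures of injectivity are of three kinds: $\Phi$ is constant on each maximal interval on which $\rho$ stays inside some $R_v$; it may revisit an end of $G$ (the image of an end of $\overline G$) or a vertex edge-dominating an end; and it may revisit an infinite-degree vertex $v$ of $G$ because $\rho$ re-enters $R_v$ through a different spoke. Collapsing, in turn, the maximal $R_v$-intervals, then the self-intersections at ordinary vertices (by the transfinite argument of the proof of Lemma \ref{lem:pathh}), and finally applying Lemma \ref{lem:pathh} itself to the remaining self-intersections at ends and edge-dominating vertices, produces an oriented topological path from $x$ to $y$ in $\ETOP^\prime(G)$; since each collapsing step only deletes edges from the image, this path still avoids $F$, contradicting the choice of $F$. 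Hence $F$ meets every oriented topological path from $x'$ to $y'$ in $\ETOP^\prime(\overline G)$, so $|F|\ge\overrightarrow{\lambda}^T_{\overline G}(x',y')$; as $F$ was arbitrary, the displayed inequality follows.

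Putting the pieces together,
\[
\overrightarrow{\lambda}^T_G(x,y)\ \ge\ \overrightarrow{\lambda}^T_{\overline G}(x',y')\ \ge\ \Bigl\lfloor\tfrac12\lambda_{\overline G}(x',y')\Bigr\rfloor\ =\ \Bigl\lfloor\tfrac12\lambda_G(x,y)\Bigr\rfloor,
\]
which is the assertion of the lemma. The one point that genuinely needs care is the middle of the preceding paragraph: one must check that the successive collapsings turn $\Phi=\pi\circ\rho$ into a bona fide oriented topological path without ever introducing an edge outside $\rho$'s image, and that the endpoints $x$ and $y$ survive (taking, as usual, the last occurrence of $x$ and the first occurrence of $y$). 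This is where the "re-entry into $R_v$'' phenomenon — which can occur even when $v$ edge-dominates no end of $G$ — forces the extra collapsing step at ordinary vertices before Lemma \ref{lem:pathh} becomes applicable; everything else is routine bookkeeping already carried out in the discussion preceding the statement.
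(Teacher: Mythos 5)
Your proof is correct and follows essentially the same route as the paper's: lift $x,y$ to vertices $x'\in\pi^{-1}[x]$, $y'\in\pi^{-1}[y]$ of $\overline G$ with $\lambda_{\overline G}(x',y')=\lambda_G(x,y)$, apply Proposition \ref{lem:locfinfinal} to the locally finite graph $\overline G$, and push the resulting oriented topological path down through $\pi$ using the collapsing machinery of Lemma \ref{lem:pathh}. Your explicit treatment of the re-entry of $\rho$ into an $R_v$ whose vertex $v$ dominates no end --- which forces an extra collapsing step at ordinary vertices before Lemma \ref{lem:pathh} applies --- is a welcome precision that the paper's own argument passes over more quickly.
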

    \ifthenelse{\equal{\proofshow}{1}}{
    \begin{proof}
        There are $x^\prime\in\pi^{-1}[x]$ and $y^{\prime}\in\pi^{-1}[y]$ such that 
        \[\lambda_G(x,y)=\lambda_{\overline{G}}(x^\prime,y^\prime).\]
        Given a set of edges $F$ of $G$ with less that $\lfloor\frac{1}{2}\lambda(x,y)\rfloor$, there is an oriented topological path in $\overrightarrow{\overline G}$ from $x^\prime$ to $y^\prime$ that avoids $F$.
        When considering the arguments above for it, we deduce an oriented topological path on $G$ from $x$ to $y$ that avoids $F$.
    \end{proof}
    }{}

    In order to prove the general case, we shall once more invoke the bond-faithful decomposition.
   This decomposition at the same time will be bound-faithful, and due to the Lemma \ref{lem:subbondtop}, the following result is true.

   \begin{lem}\label{lem:decfin}
       For every finitely separated graph $G$ there is a decomposition $\{G_i:i\in I\}$ such that each element $G_i$ is countable, the continuous function $\iota:\ETOP^\prime(G_i)\to\ETOP^\prime(G)$ (defined in Lemma \ref{lem:subbondtop}) is a topological embedding and, for every $x,y\in V(G_i)$, $\lambda_G(x,y)=\lambda_{G_i}(x,y)$. 
   \end{lem}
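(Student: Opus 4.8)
The plan is to take for $\{G_i:i\in I\}$ exactly the bond-faithful decomposition of $G$ into countable and connected subgraphs furnished by Laviolette's theorem \cite{LAVIOLETTE2005259} quoted above, and then to verify the two extra properties. The topological-embedding part is immediate: each $G_i$ is a bond-faithful subgraph of $G$, and since $G$ is finitely separated so is every $G_i$ (trivially $\lambda_{G_i}\le\lambda_G$), so no two vertices of $G_i$ are infinitely edge-connected and Lemma~\ref{lem:subbondtop} applies verbatim, giving that $\iota:\ETOP^\prime(G_i)\to\ETOP^\prime(G)$ is a topological embedding. So the real content is the identity $\lambda_{G_i}(x,y)=\lambda_G(x,y)$ for $x,y\in V(G_i)$, and in fact this is just the edge-connectivity analogue of what bond-faithfulness was designed to preserve.

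For that identity, $\lambda_{G_i}(x,y)\le\lambda_G(x,y)$ is automatic from $G_i\subseteq G$, so I would argue the reverse inequality by contradiction: assume $\lambda_{G_i}(x,y)=m'<\lambda_G(x,y)$. Since $G_i$ is connected, a minimum $x$--$y$ edge cut $F$ of $G_i$ (of size $m'$, hence finite) is actually a \emph{bond} of $G_i$: letting $A$ be the vertex set of the component of $x$ in $G_i-F$, minimality of $F$ forces $F=E_{G_i}(A,V(G_i)\setminus A)$ with both $G_i[A]$ and $G_i[V(G_i)\setminus A]$ connected — a standard uncrossing argument rules out any further component of $G_i-F$, since such a component would have no edges to the rest of $G_i$, contradicting connectedness of $G_i$. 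Bond-faithfulness of $G_i$ then gives that $F$ is a finite bond of $G$, say $F=\partial_G B$ with $G[B]$ and $G[V(G)\setminus B]$ connected.

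The key step is to see that this $B$ still separates $x$ from $y$ in $G$. No edge of $G_i[A]$ lies in $F=\partial_G B$ (such edges have both endpoints in $A$), so the connected subgraph $G_i[A]$ lies entirely inside $B$ or entirely inside $V(G)\setminus B$; say $A\subseteq B$. The same reasoning puts $V(G_i)\setminus A$ entirely on one side, and it cannot be the $B$-side, for then every edge of $F$ would have both ends in $B$ and hence lie outside $\partial_G B$, which is absurd since $F\neq\emptyset$. Thus $x\in A\subseteq B$ while $y\in V(G_i)\setminus A\subseteq V(G)\setminus B$, so $\partial_G B$ separates $x$ from $y$ in $G$ and $\lambda_G(x,y)\le|F|=m'$, contradicting $m'<\lambda_G(x,y)$. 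This finishes the lemma, the same argument showing more generally that $\lambda_H(x,y)=\lambda_G(x,y)$ for vertices $x,y$ of any bond-faithful connected subgraph $H\subseteq G$ with $\lambda_G(x,y)$ finite.

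I expect the main obstacle to be precisely this last transfer step: making sure that a bond of the small piece $G_i$ which separates $x$ and $y$ there continues to separate them once re-read as a bond of the ambient graph $G$, since a priori the partition of $V(G)$ realizing that bond could place $x$ and $y$ on the same side. The other ingredients — that Laviolette's pieces are countable and connected, that bond-faithfulness and finite separation are inherited by them, that a minimum cut in a connected graph is a bond, and that $\iota$ is an embedding — are routine, and the overall structure mirrors the reduction arguments already used for Theorems~\ref{thm:general} and \ref{thm:wellb} and in \cite{pitz2024,ASSEM25}.
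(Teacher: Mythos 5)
Your proposal is correct and follows essentially the same route as the paper: the same Laviolette bond-faithful decomposition into countable connected subgraphs, the same appeal to Lemma~\ref{lem:subbondtop} for the embedding, and the same connectivity identity, which the paper simply cites from \cite{LAVIOLETTE2005259} and you prove directly. Your direct argument is sound, except that your justification for a minimum $x$--$y$ cut of the connected graph $G_i$ being a bond is misstated (an extra component of $G_i-F$ \emph{would} have edges to the rest, namely edges of $F$; it is excluded because deleting those edges from $F$ would yield a smaller $x$--$y$ cut, contradicting minimality).
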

        \begin{proof}
            Take the bond-faithful decomposition $\{G_i:i\in I\}$ into countable and connected subgraphs.
            From the Lemma \ref{lem:subbondtop} we know that $\ETOP^\prime(G_i)$ has the natural embedding into $\ETOP^\prime(G)$.
            The preservation of connectivity was seen in \cite{LAVIOLETTE2005259}.            
        \end{proof}

    In order to construct the topological path, we will link points in an undirected path, using the following lemma, which comes as the amalgamation of Lemmas 6.4 and 6.5 of \cite{pitz2024}.

    \begin{lem}[\cite{pitz2024}]\label{lem:pitz}
    Let $G$ be a graph and $\{G_i:i\in I\}$ a bond-faithful decomposition into countable connected subgraphs.
    For every two distinct vertices $x,y\in V(G)$, there is a path $P$ from $x$ to $y$ with distinct vertices enumerated as
    \[x_0=x,...,x_{n+1}=y\] 
    in order and, for all $m\leq n$, $x_mPx_{m+1}=E(P\cap G_{i_m})$ for some $i_m\in I$.
    Then for all $m\leq n$, 
    \[\lambda_{G_{i_m}}(x_m,x_{m+1})\geq\min\{\lambda_G(x,y),\aleph_0\}.\]
\end{lem}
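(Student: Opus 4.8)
The plan is to reprove the statement in the spirit of \cite{pitz2024}, separating the trivial structural part from the real content, which is the choice of path. Write $\kappa=\min\{\lambda_G(x,y),\aleph_0\}$; we may assume $x$ and $y$ are edge-connected, as otherwise there is nothing to prove. First I would observe that for \emph{any} $x$--$y$ path $P$ in $G$ the structural presentation is automatic: each edge of $P$ lies in exactly one member of the decomposition, so grouping maximal runs of consecutive edges lying in a common $G_i$ yields $x_0=x,\dots,x_{n+1}=y$ in order with $x_mPx_{m+1}=E(P\cap G_{i_m})$, and the $x_m$ are distinct since $P$ is a path. Hence the whole content is to produce a path all of whose segments satisfy $\lambda_{G_{i_m}}(x_m,x_{m+1})\ge\kappa$.

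The engine is a rerouting step that converts bond-faithfulness into a tool. Suppose some segment of $P$ has $\lambda_{G_{i_m}}(x_m,x_{m+1})=t<\kappa$, so $t$ is finite. From a minimum $x_m$--$x_{m+1}$ cut in $G_{i_m}$ one extracts a \emph{bond} $B$ of $G_{i_m}$ with $|B|\le t$ separating $x_m$ from $x_{m+1}$ (take the edge boundary of the side of $x_m$ and, if necessary, shrink it to a bond). Since $G_{i_m}$ is bond-faithful, $B$ is a finite bond of $G$, say $B=\partial_G(S)$ (the set of edges with exactly one end in $S$) with $G[S]$ and $G[V\setminus S]$ connected, and without loss of generality $x_m\in S$, $x_{m+1}\notin S$. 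Because $|B|=t<\kappa\le\lambda_G(x,y)$, the cut $B$ cannot separate $x$ from $y$, so relabelling $S$ if needed we may assume $x,y\in S$. Let $a$ be the first and $b$ the last vertex of $P$ outside $S$ (these exist since $x_{m+1}\notin S$); then the piece of $P$ up to the predecessor of $a$ and the piece of $P$ from the successor of $b$ to $y$ both lie in the connected graph $G[S]$, so replacing the part of $P$ between them by a path in $G[S]$ and tidying the resulting walk into a simple path gives an $x$--$y$ path $P'\subseteq G[S]$ using no edge of $B$.

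The main obstacle is assembling these local repairs into one path that is good on \emph{every} segment at once: naively iterating the step need not terminate when $G$ is infinite, both because "number of bad segments'' is not visibly decreased and because the side graphs $G[S]$ need not inherit bond-faithfulness of the decomposition. This is exactly where the labour of \cite{pitz2024} sits, and the way to organise it is to fix the relevant cut structure in advance. When $\kappa$ is finite, use the decomposition hypothesis that every finite cut of $G$ lies in a single $G_i$ to pin a minimum $x$--$y$ cut of size $\kappa$ inside one $G_{i^\ast}$, together with $\kappa$ edge-disjoint $x$--$y$ paths furnished by Menger's theorem, and build the path level by level relative to the two sides of that cut so that no subsequent repair can drop a segment's local connectivity below $\kappa$; when $\kappa=\aleph_0$ one runs the finite case along an increasing sequence of thresholds $\kappa_1<\kappa_2<\cdots$ and amalgamates. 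Once the construction is arranged to converge, the remaining ingredients — extracting bonds from minimum cuts, invoking Menger, and straightening walks into paths — are routine.
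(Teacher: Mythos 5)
The paper does not actually prove this lemma --- it is imported verbatim as the amalgamation of Lemmas 6.4 and 6.5 of \cite{pitz2024} --- so your proposal is judged on its own terms, and on those terms it has a genuine gap, one which you yourself locate but do not close. The local rerouting step is essentially sound: a minimum $x_m$--$x_{m+1}$ cut in $G_{i_m}$ does contain a bond $B$ of $G_{i_m}$ still separating $x_m$ from $x_{m+1}$, bond-faithfulness makes $B$ a finite bond of $G$, and one can check that $B$ then separates $x_m$ from $x_{m+1}$ in $G$ as well, so $|B|<\lambda_G(x,y)$ forces $x$ and $y$ onto one side. But the lemma asserts the existence of a single path all of whose segments are good simultaneously, and that is precisely the part you replace by the sentence ``once the construction is arranged to converge, the remaining ingredients are routine.'' Fixing one minimum $x$--$y$ cut in advance does not control the bad segments: the small bond witnessing $\lambda_{G_{i_m}}(x_m,x_{m+1})<\kappa$ lives in an arbitrary part $G_{i_m}$ and need bear no relation to the global minimum cut, so nothing in the ``level by level'' sketch prevents each repair from creating new bad segments; and for $\kappa=\aleph_0$ the paths produced at thresholds $\kappa_1<\kappa_2<\cdots$ are different paths, so ``amalgamating'' them into one path good for every threshold at once requires an actual stabilization or compactness argument that is absent. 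This unclosed loop is the entire content of the lemma.

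A secondary error: the structural presentation is not automatic for an arbitrary $x$--$y$ path. The condition $x_mPx_{m+1}=E(P\cap G_{i_m})$ forces $P$ to meet each part of the decomposition in a single consecutive run; if $P$ leaves some $G_i$ and later re-enters it, the maximal-run decomposition gives a segment strictly contained in $E(P\cap G_i)$, violating the displayed equality. One can repair this by shortcutting inside the connected part $G_i$ between its first and last visit, but that surgery interacts with the connectivity requirement on the other segments and therefore has to be folded into the main construction rather than dismissed as trivial bookkeeping.
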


    Now we have all the necessary tools to prove the main result of this section, the Theorem \ref{thm:topolcon}.

    \begin{proof}[Proof of Theorem \ref{thm:topolcon}]
        Consider the decomposition $\{G_i:i\in I\}$ of $G$ as in the Lemma \ref{lem:decfin}.
        For each $i\in I$, take the orientation $\overrightarrow{G_i}$ as the one in the Lemma \ref{lem:counttop}.
        This way we have defined an orientation $\overrightarrow{G}$, since each edge of $G$ is an edge of exactly one $G_i$.
        We shall prove that this orientation satisfies the theorem.

        Let $x,y\in V(G)$ be two different vertices and take the path $P$ from $x$ to $y$ as in the Lemma \ref{lem:pitz}.
        Consider a set $F$ of less than $\lfloor\frac{1}{2}\lambda(x,y)\rfloor$ edges of $G$.
        Since for every $m\leq n$
        \[\overrightarrow{\lambda_{G_{i_m}}} {}^T(x_m,x_{m+1})\geq\left\lfloor\frac{1}{2}{\lambda_{G_{i_m}}}(x_m,x_{m+1})\right\rfloor\geq\left\lfloor\frac{1}{2}\lambda_G(x,y)\right\rfloor,\]
        there is an oriented topological path $\overrightarrow{P_m}$ in $\ETOP^\prime(G_{i_m})$ from $x_m$ to $x_{m+1}$ that avoids $F$.
        The concatenation of the paths $\overrightarrow P=\overrightarrow{P_0}\ast...\ast\overrightarrow{P_n}$ is a continuous function such that every pre-image is finite.
        Thus, by repeating the argument of skipping self-intersections of Lemma \ref{lem:pathh}, we may conclude that there is an oriented topological path from $x$ to $y$ that avoids $F$.
    \end{proof}

\section{Constructing an Infinite Path for Locally Finite Graphs}\label{sec:Const}   

    The main tool we will use for constructing infinite paths on locally finite graphs is the $F$-limit, so we will define it here, but for a more complete read, see \cite{F-limit}.
    \begin{deff}
        Let $F$ be a non-principal ultrafilter over $\N$, let $G$ de a graph and let $\langle H_n:n\in\N\rangle$ be a sequence of subgraphs of $G$. 
        We say that a subgraph $H\subseteq G$ is the $F$-limit of $\langle H_n:n\in\N\rangle$ if
        \begin{align*}
            & V(H)=\{v\in V(G):\{n\in\N:v\in V(H_n)\}\in F\},\\
            & E(H)=\{e\in E(G):\{n\in\N:e\in E(H_n)\}\in F\}.
        \end{align*}

        If, for each $n\in\N$, we consider an orientation $\overrightarrow{H_n}$, then we assign an orientation to each $e\in V(H)$ with incident vertices $u,v$ by
        \begin{align*}
u\overrightarrow{e}v&\iff \{n\in\N:e\in E(H_n),\ u\overrightarrow e v\mbox{ in }\overrightarrow{H_n}\}\in F,\\
v\overrightarrow{e}u&\iff\{n\in\N:e\in E(H_n),\ v\overrightarrow e u\mbox{ in }\overrightarrow{H_n}\}\in F.
        \end{align*}
    \end{deff}
    The above definition of the orientation is justified by the fact that $F$ is an ultrafilter, then for every edge exactly one of the two cases must be true.    
    The next results show that $F$-limits of paths at least resemble a topological path.


\begin{prop}
    Let $G$ be a  graph and $\langle P_n:n\in\N\rangle$ a sequence of finite paths.
    If $P\subseteq G$ is the $F$-limit of the sequence, then the degree of $v\in V(P)$ in $P$ is at most $2$.
    In case $G$ is locally finite, then the degree of $v\in V(P)$ in $P$ is
    \begin{itemize}
        \item $1$, if $\{n\in\N:v\in V(P_n)\mbox{ is an extremity of the path}\}\in F$;
        \item $2$, otherwise.
    \end{itemize}
    Moreover, if every path is oriented $\overrightarrow{P_n}$, then for $v\in V(P)$ the (in-)out-degree is
    \begin{itemize}
        \item $0$, if $\{n\in\N:v\in V(P_n)\mbox{ is the end (first) vertex of the path}\}\in F$;
        \item $1$, otherwise.
    \end{itemize}
\end{prop}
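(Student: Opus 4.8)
The plan is to analyze the $F$-limit degree by degree, using the fact that each $P_n$ is a path (so every vertex has degree at most $2$ in $P_n$) together with the ultrafilter's decisiveness. First I would establish the general upper bound: fix $v\in V(P)$. For each $n$ with $v\in V(P_n)$, the set $E_n(v)$ of edges of $P_n$ incident to $v$ has size at most $2$; so partition the (finitely many, if $G$ is locally finite — but in general possibly infinitely many) edges $e$ incident to $v$ in $G$. An edge $e\in E(P)$ incident to $v$ means $\{n : e\in E(P_n)\}\in F$; such an $e$ is in $E_n(v)$ for all these $n$. If three distinct edges $e_1,e_2,e_3$ incident to $v$ all lay in $E(P)$, then $A_1\cap A_2\cap A_3\in F$ (where $A_i=\{n: e_i\in E(P_n)\}$, each in $F$), which is nonempty, so some $P_n$ has all three incident to $v$ — contradicting that $P_n$ is a path. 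Hence $\deg_P(v)\le 2$. This argument uses only that $F$ is a filter, not locally finiteness.

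For the locally finite case I would sharpen this to pin down the exact degree. Now $v$ has finitely many incident edges $e_1,\dots,e_d$ in $G$; each $A_i=\{n: v\in V(P_n), e_i\in E(P_n)\}$ is a subset of $B:=\{n: v\in V(P_n)\}$, and for $n\in B$ we have $\deg_{P_n}(v)\in\{1,2\}$ with $v$ an extremity iff $\deg_{P_n}(v)=1$. The key observation is that $\{A_i\cap B : i\le d\}$ together with the ``extremity'' set $X:=\{n\in B: v$ is an extremity of $P_n\}$ partition $B$ in a controlled way: for $n\in B\setminus X$ exactly two of the $A_i$ contain $n$, while for $n\in X$ exactly one does. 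Assuming $B\in F$ (the case $B\notin F$ forces $v\notin V(P)$, so there is nothing to prove), if $X\in F$ then on $X$ a single-valued ``the unique incident edge'' function lands in $\{e_1,\dots,e_d\}$; by the ultrafilter property, exactly one $A_i\cap X\in F$, and no other $A_j$ can be in $F$ (two disjoint-on-$X$ sets cannot both be in the ultrafilter up to an $F$-set), giving $\deg_P(v)=1$. If instead $X\notin F$, then $B\setminus X\in F$, and on $B\setminus X$ there are exactly two incident edges; a counting/pigeonhole argument with the ultrafilter — the map $n\mapsto\{$two incident edges$\}$ has range among the $\binom{d}{2}$ pairs, so exactly one pair $\{e_i,e_j\}$ occurs $F$-often — forces $A_i,A_j\in F$ and all other $A_\ell\notin F$, giving $\deg_P(v)=2$.

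For the orientation statement I would run the same dichotomy but with in- and out-edges. For $n\in B$, in $\overrightarrow{P_n}$ the vertex $v$ has in-degree $1$ and out-degree $1$ unless it is the first vertex (in-degree $0$) or the last vertex (out-degree $0$). Let $Y=\{n\in B: v$ is the first vertex of $\overrightarrow{P_n}\}$. If $Y\in F$, then on $Y$ the in-degree is $0$, so no incident edge is oriented into $v$ on an $F$-set, hence the in-degree of $v$ in $\overrightarrow P$ is $0$; and on $Y$ the out-degree is $1$, so by the single-valued-plus-ultrafilter argument exactly one out-edge survives, out-degree $1$. If $Y\notin F$, then $B\setminus Y\in F$ and on that set the in-degree is $1$, so again exactly one incident edge is oriented into $v$ in $\overrightarrow P$, in-degree $1$; this case covers both ``$v$ eventually interior'' and ``$v$ eventually last.'' The symmetric statement for out-degree follows verbatim with ``last vertex'' in place of ``first vertex.''

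The routine but slightly fiddly part — the one I'd be careful with — is the bookkeeping that turns ``on an $F$-large set of indices exactly $k$ of the finitely many incident edges are present'' into ``exactly $k$ of the finitely many sets $A_i$ lie in $F$.'' This is just: a finite partition of an $F$-set has exactly one block in $F$ (ultrafilter), applied to the partition of $B\setminus X$ by the value of the unordered pair of incident edges, refined appropriately; the only real content is that distinct edges give $A_i$'s whose pairwise intersections are not $F$-large once you intersect with the relevant ``exactly two edges'' set, which is immediate since on that set a vertex cannot be incident to three path-edges. The genuinely essential input is simply that each $P_n$ is a path, so $2$ is a hard ceiling on local degree for every $n$; everything else is ultrafilter arithmetic. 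No step here looks like a serious obstacle.
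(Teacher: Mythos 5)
Your proposal is correct and follows essentially the same route as the paper: a case split on whether the set of indices where $v$ is an extremity (resp.\ first/last vertex) lies in $F$, combined with the ultrafilter pigeonhole over the finitely many edges incident to $v$. You are somewhat more explicit than the paper about the general (not locally finite) bound $\deg_P(v)\le 2$ and about the ``exactly one block of a finite partition of an $F$-set is in $F$'' bookkeeping, but the underlying argument is the same.
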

    \ifthenelse{\equal{\proofshow}{1}}{
    \begin{proof}
        Consider the case where $G$ is locally finite and and each path is oriented.
        The other cases are analogous, and therefore are omitted. 
        Suppose $v\in V(P)$ is such that $\{n\in\N:v\in V(P_n)\mbox{ is the end vertex of the path}\}\in F$ and consider all incident edges $e_0,...,e_l$ at $v$ in $G$.
        For each $i=0,...,l$, the set of natural numbers $n\in\N$ such that $e_i$ is not an out-edge at $v$ in $\overrightarrow{P_n}$ is in $F$.
        Therefore, none of them are out-edges at $v$ in $\overrightarrow{P}$, concluding that its out-degree is $0$.
        For the in-degree, it is analogous.

        Suppose now $v\in V(P)$ such that $I=\{n\in\N:v\in V(P_n)\mbox{ is the end vertex of the path}\}\not\in F$ and consider all the incident edges $e_0,...,e_l$ at $v$ in $G$.
        For each $n\in I$, there is a $i\leq l$ such that $e_i$ is an out-edge at $v$ in $\overrightarrow{P_n}$.
        Therefore, there is a natural number $i\leq l$, such that $\{n\in I:e_i\mbox{ is an out-edge at }v\mbox{ is }\overrightarrow{P_n}\}\in F$, from which we conclude that $e_i$ is an out-edge at $v$ in $\overrightarrow{P}$.
    \end{proof}
    }{}

    Let us remark that, even if it will generate a structure that resembles a topological path, the $F$-limit of a sequence does not need to be a topological path.
    
    \begin{exa}\label{exp}
        There is an $F$-limit of paths on a locally finite graph that is not a topological path.
    \end{exa}
    \ifthenelse{\equal{\proofshow}{1}}{
\begin{proof}
    If we consider a multiple column one way infinite ladder, we can take a sequence of paths as in Figure \ref{im:seq} in such a way to form the $F$-limit as represented in the Figure \ref{im:lim}, which has an infinite cycle as a subspace.
    Therefore the limit is not a topological path.
    \begin{figure}
    \centering
    \begin{minipage}{0.45\textwidth}
        \centering
        \input{imagens_exseq}
        \caption{A Sequence of Paths}
        \label{im:seq}
    \end{minipage}\hfill
    \begin{minipage}{0.45\textwidth}
        \centering
        \input{imagens_exlim}
        \caption{The $F$-limit of the Sequence of Paths in \ref{im:seq}}
        \label{im:lim}
    \end{minipage}
\end{figure}
\end{proof}
}{}

    Even so, as stated earlier, we will be able to find a topological path inside the $F$-limit of paths when our graph is locally finite.
    For example, in the $F$-limit of paths presented in the Example \ref{exp}, one may understand it as a curve on the topological space and so find a topological path by omitting the infinite cycle, considering only the leftmost and rightmost rays in the Figure \ref{im:lim}. 
  
  Consider $G$ a locally finite graph, with an $F$-limit orientation sequence $\overrightarrow G_n$ and our end vertices $u,v\in V(G_0)$ with oriented paths $\overrightarrow P_n$ from $u$ to $v$ on $\overrightarrow G_n$, and $\overrightarrow{P}$ the $F$-limit of such sequence of paths.
    Define a linear order $\prec$ over the set $V(P)\cup E(P)$ by $x\prec y$ if, and only if, the set of natural numbers $n$ such that $x$ appears before $y$ on $\overrightarrow{P_n}$ is in $F$.

    Without loss of generality, we may assume that for each edge $e$ of $G$, the intervals $[0,1]$ respect the direction o the edges, that is, if $u$ is the out-vertex of $e$, then $u=0$ in the identification of $e$ as the interval $[0,1]$.

      A useful tool for our results is that there are good ways to extract paths from self-intersecting paths.
    For the construction of the topological paths, some of order theory will be utilized, so let us remember some properties of linear orders (for more information we refer to \cite{Kunen1980-KUNSTA}).
    
    Consider the set $L\subseteq |G|$ that comes from identifying the vertices and edges of $P$ in $|G|$ and define the following linear order $(L,\sqsubset)$ by $x\sqsubset y$ if, and only if, 
    \begin{itemize}
        \item $x,y\in V(P)$ and $x\prec y$; or
        \item $x\in V(P)$, $y$ is in the interior of the edge $e$ and $x\prec e$; or
        \item $y\in V(P)$, $x$ is in the interior of the edge $e$ and $e\prec y$; or
        \item $x$ is in the interior of the edge $e$, $y$ is in the interior of the edge $e^\prime$ and $e\prec e^\prime$;
        \item $x$ and $y$ are in the interior of the edge $e$ and $x<y$ in the usual order of $[0,1]$.
    \end{itemize}

\begin{lem}
    The linear order $(L,\sqsubset)$ is dense, separable, has minimum $u$ and maximum $v$.
    Therefore, its Dedekind completion $\overline L$ is order-isomorphic to the interval $[0,1]$.
\end{lem}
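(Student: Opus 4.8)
The statement bundles four claims about the linear order $(L, \sqsubset)$: it is dense, it is separable, it has minimum $u$ and maximum $v$, and consequently its Dedekind completion is order-isomorphic to $[0,1]$. The strategy is to verify the first three directly from the construction, and then invoke the classical Cantor-type characterization: a dense separable linear order with endpoints has Dedekind completion isomorphic to $[0,1]$ (this is the order-theoretic fact referenced via \cite{Kunen1980-KUNSTA}). So the real content is checking density and separability; endpoints are essentially by definition.

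**Endpoints.** Since every $\overrightarrow{P_n}$ is an oriented path from $u$ to $v$, the vertex $u$ appears before every other vertex or edge in each $\overrightarrow{P_n}$, hence $u \prec x$ for every $x$ with $x \neq u$, and likewise $x \prec v$; translating through the definition of $\sqsubset$, and recalling we identified each edge-interval so its orientation agrees with $\prec$, we get $u \sqsubset z \sqsubset v$ for all $z \in L \setminus \{u,v\}$. So $u = \min L$, $v = \max L$.

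**Density.** Take $x \sqsubset y$ in $L$. If both lie in the interior of the same edge $e$, density is the density of $(0,1)$, so we may assume otherwise. If $x$ lies in the interior of an edge $e$, then any point of $(x,1) \subseteq e$ works unless $y$ is the out-vertex endpoint situation forces us past $e$; if $y$ lies in the interior of an edge $e'$, similarly any point of $(0,y)\subseteq e'$ works. The remaining case is $x,y \in V(P)$ with $x \prec y$: here I would use that there is at least one edge $f$ of $P$ between them — indeed, since $P$ is the $F$-limit of paths and $x \prec y$ are distinct vertices of $P$, for $F$-many $n$ the vertex $x$ precedes $y$ on $\overrightarrow{P_n}$, so for $F$-many $n$ there is an edge of $\overrightarrow{P_n}$ strictly between $x$ and $y$; since $G$ is locally finite, $x$ has finitely many incident edges, so one fixed incident edge $f$ of $x$ in $G$ is the successor-edge of $x$ on $\overrightarrow{P_n}$ for $F$-many $n$, whence $f \in E(P)$ and $x \prec f$, and (if $f$'s other endpoint is not already between, iterate once more, or simply) an interior point of $f$ lies strictly $\sqsubset$-between $x$ and $y$. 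This handles density; the one subtlety is ruling out that $x$ and $y$ are ``$\sqsubset$-adjacent'' with nothing between, which local finiteness together with the $F$-limit structure prevents, since a vertex of $P$ is always $\sqsubset$-adjacent to interior points of its incident $P$-edges.

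**Separability.** The edge set $E(P)$ is countable (as $G$ is locally finite and $P \subseteq G$ is a countable subgraph), and for each $e \in E(P)$ fix a countable dense subset $D_e$ of its interior; then $V(P) \cup \bigcup_{e \in E(P)} D_e$ is a countable subset of $L$. To see it is dense in $(L,\sqsubset)$: given $x \sqsubset y$, by density there is $z$ strictly between; if $z \in V(P)$ we are done, and if $z$ is interior to an edge $e$, then by density of $D_e$ in that interior, together with the above analysis there is a point of $D_e$ (or an endpoint vertex of $e$, which is in $V(P)$) strictly between $x$ and $y$. Hence $(L,\sqsubset)$ is separable.

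**Conclusion and main obstacle.** With density, separability, and endpoints established, the cited order-theoretic theorem gives that the Dedekind completion $\overline L$ is order-isomorphic to $[0,1]$. I expect the main friction point to be the density argument in the vertex-vertex case: one must be careful that ``$x \prec y$ as vertices of $P$'' genuinely forces an intervening $P$-edge, and this is exactly where local finiteness is used (without it, the successor-edge at $x$ need not be well-defined via $F$, and $x,y$ could be $\sqsubset$-adjacent in $L$, breaking density). Everything else is routine bookkeeping with the definition of $\sqsubset$ and the coherence of the edge-interval orientations with $\prec$.
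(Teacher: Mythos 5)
Your proof is correct and follows essentially the same route as the paper: the paper's proof consists only of the separability argument, taking a countable dense subset of each edge-interval and unioning over the countably many edges of $P$, exactly as you do. The density, endpoint, and completion steps you spell out (in particular the ultrafilter/local-finiteness argument producing a successor edge of $P$ between two $\prec$-comparable vertices) are the routine verifications the paper leaves implicit, and they check out.
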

    \ifthenelse{\equal{\proofshow}{1}}{
    \begin{proof}
        For each edge $e$ in $P$, take $D_e$ a countable dense in the interval $[0,1]$ associated to $e$.
        The set $D=\bigcup_{e\in E(P)}D_e$ is countable and dense in $L$.
    \end{proof}
}{}

When looking at $(L,\sqsubset)$ with the interval topology, it is a topological subspace of $|G|$, that is, the identification is a topological immersion that respects the orientation $\overrightarrow{G}$.
We will extend it by assigning each gap an end of the graph to a continuous function $\Phi:\overline{L}\to |G|$, which is equivalent to defining a curve $\alpha:[0,1]\to G$ whose only self-intersections are in the ends.
From this curve, we are able to define a homeomorphism $\rho:[0,1]\to G$.

For the first step towards defining the extension of $\phi:L\to |G|$, we will see that such an extension is in fact possible.

\begin{lem}
    For a gap $(A,B)$ in $L$ the intersection $\bigcap_{\substack{a\in A\\b\in B}}\overline{\phi[a,b]}$ is an unitary set $\{x\}$ with $x$ an end.
\end{lem}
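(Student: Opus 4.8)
The plan is to combine the compactness of $|G|=\ETOP^\prime(G)$ with the structural description of the realized path $L$ coming from the $F$-limit. First I would record two facts about $(L,\sqsubset)$, both immediate from the definition of the $F$-limit together with the fact that each $\overrightarrow{P_n}$ is a \emph{directed} path. \textbf{(a)} Every edge $e\in E(P)$ is $\sqsubset$-adjacent to both of its endpoints, with its tail immediately before the $[0,1]$-block of $e$ and its head immediately after: indeed, for $F$-many $n$ the edge $e$ carries the orientation of the $F$-limit, and in the directed path $\overrightarrow{P_n}$ each edge is immediately preceded by its tail and followed by its head, so for $F$-many $n$ the two path-neighbours of $e$ in $P_n$ are exactly its endpoints; any element strictly $\prec$-between $e$ and one of its endpoints would have to lie strictly between them in $P_n$ for $F$-many $n$, which is impossible. \textbf{(b)} Consequently (using the degree proposition, whose only degree-$1$ vertices are $u$ and $v$) every vertex of $P$ other than $u,v$ has an immediate $\sqsubset$-predecessor vertex and an immediate $\sqsubset$-successor vertex, joined to it by an edge of $P$. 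From (a)--(b) one checks that $L$ has a gap neither at an edge-block boundary nor at a vertex; hence every gap $(A,B)$ is \emph{at infinity}: $A$ has no $\sqsubset$-largest vertex and $B$ has no $\sqsubset$-smallest vertex.

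Next, nonemptiness: the family $\{\overline{\phi[a,b]}:a\in A,\ b\in B\}$ consists of nonempty closed subsets of the compact space $|G|$ and is downward directed, since for $(a_1,b_1)$ and $(a_2,b_2)$ the interval $[\max\{a_1,a_2\},\min\{b_1,b_2\}]$ yields a smaller member. Hence $X:=\bigcap_{a\in A,\,b\in B}\overline{\phi[a,b]}\neq\emptyset$.

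The core step is $X\subseteq\Omega_E(G)$. Suppose $p\in X$ is not an end, so $p$ is a vertex or an interior point of an edge. By local finiteness $p$ has an open neighbourhood $N$ in $|G|$ whose closure is a finite subgraph containing no end, whose boundary is a finite set of interior-edge points, and which meets $P$ in at most the (at most two) edge-segments of $P$ incident with $p$. If $p$ lies in the interior of an edge not in $E(P)$, then $\phi^{-1}(N)=\emptyset$ for $N$ small enough, so $p\notin\overline{\phi[a,b]}$ for all $a,b$ -- contradiction. Otherwise, by the degree-$\leq 2$ bound on $P$ and fact (a), $\phi^{-1}(N)$ is a single $\sqsubset$-interval $(s,t)_L$ containing $p$ (half-open at an endpoint if $p\in\{u,v\}$), with $s,t\in L$. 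Since the gap is at infinity, if $p\in A$ there is a vertex $a\in A$ with $p\sqsubset a$, hence with $t\sqsubset a$ by (b), so $[a,b]_L\cap(s,t)_L=\emptyset$ for every $b\in B$, whence $\phi[a,b]\cap N=\emptyset$ and $p\notin\overline{\phi[a,b]}$, a contradiction; the case $p\in B$ is symmetric. Thus $X\subseteq\Omega_E(G)$.

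Finally, $X$ contains at most one end. Fix a finite $F\subseteq E(G)$. Since $F\cap E(P)$ is finite, removing its open $[0,1]$-blocks splits $L$ into finitely many $\sqsubset$-intervals $L_0,\dots,L_m$, each using no edge of $F$ and hence realized inside a single component of $G-F$; by (a) the $\sqsubset$-endpoints of each $L_j$ are vertices of that component, so $\overline{\phi[L_j]}\subseteq C_j\cup\Omega_E(F,C_j)\cup S_j$ for a single component $C_j$ of $G-F$. The gap lies strictly inside one $L_{j_0}$, so picking $a\in A$, $b\in B$ inside $L_{j_0}$ gives $X\subseteq\overline{\phi[a,b]}\subseteq C_{j_0}\cup\Omega_E(F,C_{j_0})\cup S_{j_0}$. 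As $F$ was arbitrary, any two ends of $X$ are inseparable by finite edge sets, so $X$ has at most one end; together with $X\neq\emptyset$ and $X\subseteq\Omega_E(G)$ this gives $X=\{x\}$ with $x$ an end. The step I expect to be most delicate is $X\subseteq\Omega_E(G)$: it rests entirely on translating the $F$-limit and orientation data into the order structure of $L$ -- precisely, that $\phi^{-1}$ of a small neighbourhood of a non-end point is one order-interval with endpoints lying in $L$, and that a gap of $L$ is never pinned to a vertex or an edge-boundary.
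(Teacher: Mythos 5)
Your overall strategy matches the paper's (compactness for nonemptiness, a local analysis to exclude vertices and interior points of edges, and a separator argument to exclude two distinct ends), and in the first two parts your write-up is actually more careful than the paper's own sketch: in particular, your facts (a)--(b), the observation that no gap sits at a vertex or edge-block boundary, and the treatment of points of $\operatorname{im}\phi$ are genuine improvements.

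There is, however, a real gap in the uniqueness step, at the sentence ``each using no edge of $F$ and hence realized inside a single component of $G-F$.'' This inference is a non sequitur: an order-interval $L_j$ of $L$ that avoids $F$ need not map into one component of $G-F$, because $L_j$ may itself contain many internal gaps (the vertex order of $P$ is discrete by your fact (b), but its order type can be, say, $(\omega+\omega^{*})\cdot\mathbb{Q}$), and facts (a)--(b) say nothing about what happens across such a gap. A degree-$\le 2$ ordered subgraph consisting of a ray into one component of $G-F$ followed, across a gap, by a ray in a different component would satisfy everything you have established so far while violating your claim. What saves the statement is precisely the $F$-limit structure, which you never invoke at this point: if $p\prec q$ are vertices of $P$ with $\phi(p)$ and $\phi(q)$ in different components of $G-F$, then for $F$-many $n$ the subpath of $P_n$ from $\phi(p)$ to $\phi(q)$ must use an edge of $F$; since $F$ is finite and $F$ is an ultrafilter, some fixed $f\in F$ is used between them for $F$-many $n$, so $f\in E(P)$ and the block of $f$ lies strictly between $p$ and $q$ in $\sqsubset$. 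Hence $p$ and $q$ cannot lie in the same $L_j$ after the blocks of $F\cap E(P)$ are removed. Inserting this ultrafilter/separator-crossing argument closes the gap and, in fact, is the one place where the proof genuinely uses that $P$ is a limit of actual paths rather than an arbitrary ordered subgraph of maximum degree two; without it the uniqueness of the end in $\bigcap_{a\in A,\,b\in B}\overline{\phi[a,b]}$ is not established. (The same remark applies to the analogous step in the paper's own proof, where the claim $y\notin\overline{\phi[a_0,b_0]}$ is asserted without this justification.)
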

\ifthenelse{\equal{\proofshow}{1}}{
\begin{proof}
    Let $x\in |G|$ be a point either a vertex or in the interior of an edge that is not in the image of $\phi$.
    It follows from the definition of $\phi$ and the topology that $x\not\in\overline{\im\phi}$.

    Since $|G|$ is compact, the intersection is not empty.
    Suppose $x,y\in|G|$ are both distinct ends, where $x$ is in the intersection.
    Take a finite set $X\subseteq V(G)$ such that $x$ and $y$ are in different components of $G-A$. 
    
    Define $a_0=\begin{cases}
        u,\ \mbox{if }\phi[A]\cap X=\emptyset;\\
        \max(A\cap\phi^{-1}[X]),\ \mbox{otherwise.}
    \end{cases}$ and $b_0=\begin{cases}
        v,\ \mbox{if }\phi[B]\cap X=\emptyset;\\
        \min(B\cap\phi^{-1}[X]),\ \mbox{otherwise.}        
    \end{cases}$.
    Then $y\not\in\overline{\phi[a_0,b_0]}$.
\end{proof}
}{}

It follows that the only possibility for $\Phi$ is defining $\Phi(x)\in\bigcap_{\substack{a\in A\\b\in B}}\overline{\phi[a,b]}$, for $(A,B)=x\in\overline L$.

\begin{lem}
    The function $\Phi:\overline L\to|G|$ defined above is continuous.
\end{lem}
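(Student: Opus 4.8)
The plan is to verify continuity at each point $x\in\overline L$ by cases, since the behaviour of $\Phi$ is quite different on the three kinds of points: interior points of edges, vertices of $P$, and gaps (which get mapped to ends). First I would handle the interior-of-edge case: if $x$ lies strictly inside an edge $e$, then a neighbourhood of $x$ in $\overline L$ is just an open subinterval of the copy of $[0,1]$ attached to $e$, on which $\Phi$ is the identity onto an interval of $e$ in $|G|$, hence continuous there trivially. Next, for a vertex $w\in V(P)$ of degree $2$ in $P$, say with $P$-edges $e,e'$ incident to $w$ with $e\prec w\prec e'$, a basic neighbourhood of $w$ in $|G|$ (restricted to $L$) contains half-edges of $e$ and $e'$ near $w$; pulling back gives an interval of $\overline L$ around $w$, so continuity holds. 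If $w$ has degree $1$ in $P$ — e.g. $w=u$ or $w=v$, or $w$ was an endpoint of $P_n$ for $F$-many $n$ — the argument is the one-sided analogue. (Here I would lean on the earlier proposition describing degrees of vertices in $F$-limits of paths, together with local finiteness of $G$.)

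The substantive case, and the main obstacle, is continuity at a gap $x=(A,B)$, where $\Phi(x)$ is the unique end $\omega$ guaranteed by the previous lemma. Fix a basic open neighbourhood $U$ of $\omega$ in $|G|$, determined by a finite edge set $S$, the component $C$ of $G-S$ containing $\omega$, and the relevant half-edges on the boundary of $C$. I must produce $a\in A$ and $b\in B$ such that $\Phi$ maps the interval $(a,b)\cap\overline L$ into $U$. The idea is exactly the construction from the proof of the preceding lemma: set
\[
a_0=\begin{cases} u,& \phi[A]\cap V(S)=\emptyset,\\ \max(A\cap\phi^{-1}[V(S)]),&\text{otherwise},\end{cases}
\qquad
b_0=\begin{cases} v,& \phi[B]\cap V(S)=\emptyset,\\ \min(B\cap\phi^{-1}[V(S)]),&\text{otherwise},\end{cases}
\]
where $V(S)$ denotes the set of endpoints of edges of $S$ (and I would also push $a_0,b_0$ slightly further if needed so that the finitely many edges of $S$ and their boundary half-edges are avoided). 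The key claim is then that $\phi[a_0,b_0]\subseteq C\cup(\text{half-edges on }\partial C)$: indeed $\phi$ restricted to $[a_0,b_0]$ is a connected piece of the curve that touches no vertex of $S$ and crosses no edge of $S$ except possibly entering through a boundary half-edge, hence stays in the single component $C$ to which $\omega$ belongs; and any gap $(A',B')$ with $a_0\le A'$, $B'\le b_0$ maps to an end lying in $C$ as well, by the same separation argument applied at that sub-gap.

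Assembling these, every gap or point strictly between $a_0$ and $b_0$ is sent into $U$, so $(a_0,b_0)\cap\overline L$ is the required neighbourhood, and $\Phi$ is continuous at $x$. Finally I would remark that the three cases together cover all of $\overline L$ (every element of $\overline L$ is either an element of $L$ — a vertex or an edge-interior point — or a gap), so $\Phi$ is continuous on $\overline L$. I expect the only real care to be needed in the gap case, specifically in checking that the chosen interval truly avoids $S$ and that the boundary half-edges are handled correctly; the monotonicity of $\sqsubset$ along the curve and the finiteness of $S$ (hence of $V(S)$, using local finiteness) are what make the choice of $a_0,b_0$ well-defined and effective.
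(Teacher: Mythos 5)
Your proposal is correct and follows essentially the same route as the paper: continuity on $L$ is immediate (the paper simply notes $L$ is open in $\overline L$ and $\phi$ is continuous, where you spell out the edge-interior and vertex cases), and the substantive gap case is handled exactly as in the paper, by reusing the $a_0,b_0$ construction from the preceding lemma to show the image of $]a_0,b_0[$ lands in the basic open neighbourhood of the end $\Phi(x)$.
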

\ifthenelse{\equal{\proofshow}{1}}{
\begin{proof}
    The fact that it is continuous in the points of $L$ follows from $L\subseteq\overline L$ being an open subset and $\phi$ being continuous.

    Let $x\in \overline L$ be a gap element, and $X\subseteq V(G)$ be finite.
    If we consider $a_0$ and $b_0$ as defined in the lemma above, we have that
    $\Phi]a_0,b_0[$ is contained on one connected component of $G-X$, that is, the interval $]a_0,b_0[$ is contained on the open induced by $G-X$ and the end $\Phi(x)$, completing the proof that $\Phi$ is continuous.
\end{proof}
}{}

Notice that such function is injective in $L$, which is an open dense with the smallest and greatest elements in it, so we can define de topological path $\Psi=\Phi_\ast$ as given in the Lemma \ref{lem:pathh}.

All of this construction allows us to finally prove the locally finite case stated in the last section.

\begin{prop*}[\ref{lem:locfinfinal}]
        If $G$ is a locally finite graph, then there is an orientation $\overrightarrow G$ such that for every two distinct vertices $x,y$ the following holds:
    \[\overrightarrow{\lambda}^T(x,y)\geq\left\lfloor\frac{1}{2}{\lambda(x,y)}\right\rfloor.\]
\end{prop*}
\begin{proof}
    Take any vertex $v_0\in V(G)$, and from it define the covering of vertices by finite subsets $V_0=\{v_0\}$ and $V_{n+1}=V_n\cup N(V_n)$.
    For each $n\in\mathbb N$, take a well-ordering $\overrightarrow{G_n}$ of the finite subgraph $G_n=G[V_n]$.
    Consider the $F$-limit of these orderings as $\overrightarrow G$, an ordering of $G$.

    For any pair of vertices $x,y\in V(G)$, notice that there is a natural number $N\in\mathbb N$ such that, for every larger natural number $n\geq N$, 
    \[\lambda_{G_n}(x,y)=\lambda_G(x,y).\]

    Therefore, if we take a set of edges $X\subseteq E(G)$ with less than $\left\lfloor\frac{1}{2}{\lambda(x,y)}\right\rfloor$ many elements, for every $n\geq N$, there is a directed $x-y$-path $\overrightarrow{P_n}$ on $\overrightarrow{G_n}$ that avoids $X$.

    As the construction above concluded through the $F$-limit of these paths and some manipulation to avoid self-intersections, we have a topological $x-y$-path on $G$ that avoids $X$.
    Moreover, the direction of the path, as a $[0,1]$ function was also decided via the same $F$-limit, which entails that it is an oriented topological path.
    Thus, concluding that the topological arc-connectivity between $x$ and $y$ on $\overrightarrow G$ is at least $\left\lfloor\frac{1}{2}{\lambda(x,y)}\right\rfloor$.
\end{proof}

\section*{Acknowledgements}
The first named author thanks the support of Fundação de Amparo à Pesquisa do Estado de São Paulo (FAPESP), being sponsored through grant number 2023/00595-6. 
The second named author acknowledges the support of Conselho Nacional de Desenvolvimento Científico e Tecnológico (CNPq) through grant number 165761/2021-0.
The third named author acknowledges the support of Conselho Nacional de Desenvolvimento Científico e Tecnológico (CNPq) through grant number 141373/2025-3.
This study was financed in part by the Coordenação de Aperfeiçoamento de Pessoal de Nível Superior – Brasil (CAPES) – Finance Code 001.

\bibliographystyle{abbrv}
\bibliography{bibliografia.bib}

\begin{thebibliography}{10}

\bibitem{ASSEM25}
A.~Assem, M.~Koloschin, and M.~Pitz.
\newblock The {N}ash-{W}illiams orientation theorem for graphs with countably
  many ends.
\newblock {\em European Journal of Combinatorics}, 124:104043, 2025.

\bibitem{F-limit}
L.~Aurichi, P.~{Magalhães Júnior}, and L.~Seixas.
\newblock Limits of cycles and cover conjectures.
\newblock {\em Discrete Mathematics}, 349(2):114724, 2026.

\bibitem{BARAT20102573}
J.~Barát and M.~Kriesell.
\newblock What is on his mind?
\newblock {\em Discrete Mathematics}, 310(20):2573--2583, 2010.
\newblock Graph Theory — Dedicated to Carsten Thomassen on his 60th Birthday.

\bibitem{DIESTEL2004835}
R.~Diestel and D.~Kühn.
\newblock Topological paths, cycles and spanning trees in infinite graphs.
\newblock {\em European Journal of Combinatorics}, 25(6):835--862, 2004.
\newblock Thematic issue on Topological Graph theory.

\bibitem{egyed41}
L.~Egyed.
\newblock \"{U}ber die wohlgerichteten unendlichen {G}raphen.
\newblock {\em Math. phys. Lapok}, 48:505--509, 1941.

\bibitem{GEORGAKOPOULOS20111523}
A.~Georgakopoulos.
\newblock Graph topologies induced by edge lengths.
\newblock {\em Discrete Mathematics}, 311(15):1523--1542, 2011.
\newblock Infinite Graphs: Introductions, Connections, Surveys.

\bibitem{HAHN1997225}
G.~Hahn, F.~Laviolette, and J.~Širáň.
\newblock Edge-ends in countable graphs.
\newblock {\em Journal of Combinatorial Theory, Series B}, 70(2):225--244,
  1997.

\bibitem{jarach}
L.~Jannasch.
\newblock {\em Eine topologische Erweiterung des Orientierungs-Theorems von
  {N}ash-{W}illiams auf lokal endliche {G}raphe}.
\newblock Bachellor's thesis, Universit\"at Hamburg, 2019.

\bibitem{Kunen1980-KUNSTA}
K.~Kunen.
\newblock {\em Set Theory: An Introduction to Independence Proofs}.
\newblock North-Holland, 1980.

\bibitem{LAVIOLETTE2005259}
F.~Laviolette.
\newblock Decompositions of infinite graphs: I—bond-faithful decompositions.
\newblock {\em Journal of Combinatorial Theory, Series B}, 94(2):259--277,
  2005.

\bibitem{nashwill60}
C.~S. J.~A. Nash-Williams.
\newblock On orientations, connectivity and odd-vertex-pairings in finite
  graphs.
\newblock {\em Canadian Journal of Mathematics}, 12:555–567, 1960.

\bibitem{pitz2024}
M.~Pitz and J.~Stegemann.
\newblock The strong {N}ash-{W}illiams orientation theorem for rayless graphs.
\newblock {\em https://arxiv.org/abs/2409.10378}, 2024.

\bibitem{robbins39}
H.~E. Robbins.
\newblock A theorem on graphs, with an application to a problem of traffic
  control.
\newblock {\em The American Mathematical Monthly}, 46(5):281--283, 1939.

\bibitem{Thomas89}
C.~Thomassen.
\newblock Configurations in graphs of large minimum degree, connectivity, or
  chromatic number.
\newblock {\em Annals of the New York Academy of Sciences}, 555(1):402--412,
  1989.

\bibitem{Thomas16}
C.~Thomassen.
\newblock Orientations of infinite graphs with prescribed edge-connectivity.
\newblock {\em Combinatorica}, 36:601--621, 2016.

\end{thebibliography}

\end{document}